\documentclass[a4paper,12pt]{article}
 \usepackage[utf8]{inputenc}
 \usepackage{amsmath}
 \usepackage{amssymb}
 \usepackage{amsthm}
 \usepackage{vmargin} 
 \usepackage{graphicx}
 \usepackage{color}
 \usepackage{hyperref}
 \usepackage{cite}



\usepackage{unicodeintex}

\renewcommand{\d}{\mathrm d}

\newtheorem{definition}{Definition}[section]

\newtheorem{theorem}{Theorem}
\newtheorem{proposition}{Proposition}[section]

\newtheorem{lemma}{Lemma}

\title{Body-attitude alignment: first order phase transition, link with rodlike polymers through quaternions, and stability}
\author{Amic Frouvelle\thanks{CEREMADE, CNRS, Université Paris-Dauphine, Université PSL, 75016 Paris, France. E-mail: \texttt{frouvelle@ceremade.dauphine.fr}} \thanks{CNRS, Université de Poitiers, UMR 7348 --- Laboratoire de Mathématiques et Applications (LMA), 86000 Poitiers, France. E-mail: \texttt{amic.frouvelle@math.univ-poitiers.fr}}}
\date{}

\begin{document}
\maketitle

\begin{abstract}
  We present a simple model of alignment of a large number of rigid bodies (modeled by rotation matrices) subject to internal rotational noise. The numerical simulations exhibit a phenomenon of first order phase transition with respect the alignment intensity, with abrupt transition at two thresholds. Below the first threshold, the system is disordered in large time: the rotation matrices are uniformly distributed. Above the second threshold, the long time behaviour of the system is to concentrate around a given rotation matrix. When the intensity is between the two thresholds, both situations may occur.

  We then study the mean-field limit of this model, as the number of particles tends to infinity, which takes the form of a nonlinear Fokker--Planck equation. We describe the complete classification of the steady states of this equation, which fits with numerical experiments. This classification was obtained in a previous work by Degond, Diez, Merino-Aceituno and the author, thanks to the link between this model and a four-dimensional generalization of the Doi--Onsager equation for suspensions of rodlike polymers interacting through Maier--Saupe potential.

  This previous study concerned a similar equation of BGK type for which the steady-states were the same. We take advantage of the stability results obtained in this framework, and are able to prove the exponential stability of two families of steady-states: the disordered uniform distribution when the intensity of alignment is less than the second threshold, and a family of non-isotropic steady states (one for each possible rotation matrix, concentrated around it), when the intensity is greater than the first threshold. We also show that the other families of steady-states are unstable, in agreement with the numerical observations.
\end{abstract}

\section*{Introduction}

The mathematical study of active matter, such as aligning self-propelled particles, is now a well established field of research, inspired for instance by phase transition phenomena that appear in the Vicsek model~\cite{vicsek1995novel,chate2008collective}. Following the kinetic approach introduced in~\cite{degond2008continuum}, a simple model of alignment of unit vectors subject to internal rotational noise gives rise to a continuous phase transition at the kinetic level~\cite{frouvelle2012dynamics}. When the alignment intensity (that we call~$ρ$, since it is related to the local density~$ρ$ of particles in the inhomogeneous version~\cite{degond2013macroscopic}, where the unit vectors represent the velocities of self-propelled particles) is below a threshold~$ρ_c$, the only stable steady-state is the uniform distribution on the unit sphere. On the other hand, when~$ρ>ρ_c$, this isotropic equilibria becomes unstable and a family of stable equilibria arises: von Mises distributions with concentration parameter depending on~$ρ$, around a given unit vector. When setting the intensity of alignment as a nonlinear function of the order parameter of the system~\cite{degond2015phase}, this continuous phase transition may become a discontinuous one (or first order), with hysteresis phenomenon: a second threshold~$ρ^*<ρ_c$ appears, the uniform equilibrium distribution being stable for~$ρ<ρ_c$ and the concentrated distributions being stable for~$ρ>ρ_*$. Around those thresholds, the order parameter cannot vary continuously from a family of equilibria to the other.

Recently, in a work with Degond and Merino-Aceituno~\cite{degond2017new} we extended the model of self-propelled particles of Degond and Motsch~\cite{degond2008continuum} to the case where the orientation of particles are not only given by their velocity (a unit vector) but by their whole body attitude (an orthonormal frame, given by a rotation matrix). Then, still with Degond and Merino-Aceituno, together with Trescases~\cite{degond2018quaternions} we proposed a similar model based on quaternion representation for rotation matrices, and the models appeared to be equivalent. In these models, the interaction was normalized and no phenomenon of phase transition could occur, but we remarked that the non-normalized version may lead to such a phenomena. Finally, with Degond, Diez and Merino-Aceituno~\cite{degond2020phase} we managed to treat this phenomenon of phase transition in a homogeneous Bhatnagar--Gross--Krook (BGK) model, thanks to this link with unit quaternions and an analogy with a four-dimensional generalization of the Doi--Onsager equation for suspensions of rodlike polymers interacting through Maier--Saupe potential. Indeed, the compatibility equation we need to solve to determine the possible steady-states can be reformulated in this quaternionic formulation, and leads to a compatibility equation for the Maier--Saupe potential in dimension~$4$, which was solved in~\cite{wang2008unified}. We obtain a discontinuous phase transition with two thresholds~$ρ^*<ρ_c$, still with the same two types of stable equilibria: the uniform distribution for~$ρ<ρ_c$, and a family of generalized von Mises distributions, concentrated around a given rotation matrix when~$ρ>ρ^*$.

The aim of this paper is twofold. We first want to introduce the model of alignment of rigid bodies through numerical simulations of the particle system, in order to present the first order phase transition that we observe numerically. And then we want to provide a rigorous mathematical description of this phase transition phenomenon, at the kinetic level: the mean-field limit of the particle system when the number of particles is large is given by a nonlinear Fokker--Planck equation, for which the steady states are the same as those characterized in~\cite{degond2020phase} for the BGK equation. The main result of this article is that we have a fine description of the long-time behaviour of the solution to the Fokker--Planck equation: we classify all the families of equilibria regarding their stability, and prove the exponential stability of the uniform equilibrium when~$ρ<ρ_c$ and of the concentrated von Mises distributions when~$ρ>ρ^*$.

In Section~\ref{sec-numerical}, we present the framework of our model: a system of coupled stochastic differential equations for~$N$ matrices in~$SO_3(ℝ)$. We present a time discretization scheme of Euler--Naruyama type, and provide numerical simulations which illustrate the phenomenon of first order phase transition. In Section~\ref{sec-meanfield}, we describe the mean-field limit of this system, which takes the form of a nonlinear Fokker--Planck equation. We give general results on the behaviour of the solution of this evolution equation, and we show that the determination of its steady states amounts to solve a matrix compatibility equation. Thanks to the free energy associated to the Fokker--Planck equation, the uniform equilibria is shown to be unstable for~$ρ>ρ_c=6$, proving that in that case there are others solutions than~$0$ for the compatibility equation. Section~\ref{sec-polymers} is a summary of the results of~\cite{degond2020phase} to solve this compatibility equation: we present the link between rotation matrices and unit quaternions, and the fact that the compatibility equation can be transformed to a compatibility equation for~$Q$-tensors which was solved in~\cite{wang2008unified}. We therefore get a precise description of all the steady-states of the equation, and a way to obtain the second threshold~$ρ^*$ (as the minimum of a one-dimensional function) such that for~$ρ⩾ρ^*$ there exists non-trivial steady-states. In Section~\ref{sec-BGK}, we summarize the results of~\cite{degond2020phase} regarding the stability of these equilibria in the framework a BGK equation (which shares the same steady-states), and we are able to use these results to obtain the classification of the steady-states, as critical points of the free energy. In particular we show that three families of equilibria are unstable, and the remaining two other types are local minimizers of the free energy: the uniform distribution when~$ρ<ρ_c$ and the concentrated von Mises distributions for~$ρ>ρ^*$. Finally, Section~\ref{sec-exponential} is devoted to the main new result of this paper: the exponential stability of these two types of steady-states. In Theorem~\ref{thm-expstability-FP}, we prove that if a function~$f_0$ is sufficiently close to the set of equilibria (in relative entropy), then there exist such an equilibrium~$f_∞$ such that the solution of the Fokker--Planck equation converges exponentially fast towards~$f_∞$ (still in relative entropy). We finish this last section by some comments and perspectives.

\section{Numerical evidence of a  first-order phase transition in a system of interacting particles}
\label{sec-numerical}
\subsection{A simple SDE on~$SO_3(ℝ)$ and its time-discretization}
First of all let us recall some basic facts about~$SO_3(ℝ)$.

\begin{definition}
  \label{crossmatrix}
  For any~$\mathbf{u}={\scriptstyle\begin{pmatrix}u_1\\u_2\\u_3\end{pmatrix}}∈ℝ^3$ we denote by~$[\mathbf{u}]_×={\scriptstyle\begin{pmatrix}0&-u_3&u_2\\u_3&0&-u_1\\-u_2&u_1&0\end{pmatrix}}$ the (antisymmetric) matrix associated to the linear map~$\mathbf{v}∈ℝ^3↦\mathbf{u}\times\mathbf{v}$ in the canonical basis. 
\end{definition}

\begin{proposition}[Rodrigues' formula]
 
  Any special orthogonal matrix~$A∈SO_3(ℝ)$ can be written as a rotation around an axis in~$ℝ^3$. More precisely, there exists a unique angle~$θ∈[0,π]$ and a unit vector~$\mathbf{n}∈𝕊_2$ such that~$A$ is the rotation~$R(θ,\mathbf{n})$ of angle~$θ$ around the axis directed by~$\mathbf{n}$, given by the following formula:
  \begin{equation}
    \label{eqRodrigues}
    R(θ,\mathbf{n})=\exp(θ[\mathbf{n}]_×)=\cosθ \,I_3+\sinθ[\mathbf{n}]_×+(1-\cosθ)\mathbf{n}\mathbf{n}^{\top}.
  \end{equation}
  where~$I_3$ is the identity matrix. When~$θ∈(0,π)$, the unit vector~$\mathbf{n}$ is unique. When~$θ=π$ there are two such vectors~$\mathbf{n}$, opposite one to the other. And when~$θ=0$, any unit vector~$\mathbf{n}$ can be used.
\end{proposition}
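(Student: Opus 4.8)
The plan is the classical one: first exhibit an invariant axis, then reduce $A$ to a planar rotation on the orthogonal plane, match this reduction with the closed form~\eqref{eqRodrigues}, and finally settle uniqueness via the trace and the structure of the eigenspaces. To begin, I would show that $1$ is always an eigenvalue of $A$: its characteristic polynomial is real of degree~$3$, hence has a real root $λ$, and since $A$ is orthogonal $|λ|=1$, so $λ=\pm1$; as the product of the three eigenvalues equals $\det A=1$, a short case analysis (either three real eigenvalues, or one real one together with a complex-conjugate pair of unit modulus) shows that $1$ must occur among them. Fixing a unit vector $\mathbf n∈𝕊_2$ in the corresponding eigenspace, the plane $\mathbf n^\perp$ is $A$-invariant (orthogonality being preserved by $A$), and the restriction $A|_{\mathbf n^\perp}$ is orthogonal with $\det(A|_{\mathbf n^\perp})=\det A=1$, hence a rotation of some angle once $\mathbf n^\perp$ is oriented.

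Next I would pick a direct orthonormal basis $(\mathbf e_1,\mathbf e_2,\mathbf n)$ of $ℝ^3$; in it $A$ has the block form ${\scriptstyle\begin{pmatrix}\cosθ&-\sinθ&0\\\sinθ&\cosθ&0\\0&0&1\end{pmatrix}}$ for some $θ$, and replacing $\mathbf n$ by $-\mathbf n$ together with, say, $\mathbf e_2$ by $-\mathbf e_2$ (so that the basis stays direct) changes $θ$ into $-θ$, so we may assume $θ∈[0,π]$. In this same basis one has $[\mathbf n]_×={\scriptstyle\begin{pmatrix}0&-1&0\\1&0&0\\0&0&0\end{pmatrix}}$, whose exponential $\exp(θ[\mathbf n]_×)$ computes to precisely the block form above, so that $A=\exp(θ[\mathbf n]_×)$. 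To obtain the explicit expression, I would use the two identities $[\mathbf n]_×^2=\mathbf n\mathbf n^{\top}-I_3$ (valid because $|\mathbf n|=1$) and $[\mathbf n]_×^3=-[\mathbf n]_×$; grouping the even and the odd powers in the exponential series then yields $\exp(θ[\mathbf n]_×)=I_3+\sinθ\,[\mathbf n]_×+(1-\cosθ)\,[\mathbf n]_×^2$, which rearranges into~\eqref{eqRodrigues}. Alternatively the series can be avoided: it is enough to check that the right-hand side of~\eqref{eqRodrigues} is orthogonal, has determinant~$1$, fixes $\mathbf n$, and sends $\mathbf e_1$ to $\cosθ\,\mathbf e_1+\sinθ\,\mathbf e_2$, which forces it to equal $A$.

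It remains to discuss uniqueness. Taking traces in~\eqref{eqRodrigues} gives $\operatorname{tr}A=1+2\cosθ$, so $\cosθ$, and hence $θ∈[0,π]$, is determined by $A$. For the axis, $[-\mathbf n]_×=-[\mathbf n]_×$ gives $R(θ,-\mathbf n)=R(-θ,\mathbf n)$, so the equality $R(θ,\mathbf n)=R(θ,-\mathbf n)$ forces $\sinθ\,[\mathbf n]_×=0$, i.e.\ $θ∈\{0,π\}$. When $θ∈(0,π)$ the remaining eigenvalues $e^{\pm iθ}$ are distinct from~$1$, so the eigenspace of~$1$ is a line; this determines $\mathbf n$ up to sign, and the sign is then pinned down by the previous observation. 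When $θ=π$, $R(π,\mathbf n)=2\mathbf n\mathbf n^{\top}-I_3$ depends only on the line $ℝ\mathbf n$, which gives the two opposite vectors; and when $θ=0$, $R(0,\mathbf n)=I_3$ for every unit vector, so any $\mathbf n$ works. There is no real obstacle in this proof; the only steps requiring care are the orientation and sign bookkeeping that brings $θ$ into $[0,π]$ with the correct sign convention for $[\mathbf n]_×$, and the two elementary identities for the powers of $[\mathbf n]_×$.
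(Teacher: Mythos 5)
The paper states Rodrigues' formula as a classical fact and offers no proof, so there is nothing in the source to compare against; your argument is a correct and complete version of the standard textbook proof. The structure is sound throughout: the spectral argument (a real degree-$3$ characteristic polynomial, unit-modulus eigenvalues, $\det A=1$) correctly forces $1$ to be an eigenvalue; the reduction to the block form in a direct orthonormal basis adapted to the axis is right, as is the orientation bookkeeping ($\mathbf n\mapsto-\mathbf n$, $\mathbf e_2\mapsto-\mathbf e_2$) that lands $\theta$ in $[0,\pi]$; the two identities $[\mathbf n]_\times^2=\mathbf n\mathbf n^\top-I_3$ and $[\mathbf n]_\times^3=-[\mathbf n]_\times$ are exactly what makes the exponential series collapse to the claimed closed form; and the uniqueness discussion correctly isolates $\theta$ from $\operatorname{tr}A=1+2\cos\theta$ and the axis from the eigenspace of $1$, with the sign ambiguity resolved by $R(\theta,-\mathbf n)=R(-\theta,\mathbf n)$. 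One small remark: when you read off $[\mathbf n]_\times$ as $\scriptstyle\begin{pmatrix}0&-1&0\\1&0&0\\0&0&0\end{pmatrix}$ you are implicitly working in the adapted basis, i.e.\ conjugating both $A$ and $[\mathbf n]_\times$ by the same (special) orthogonal change-of-basis matrix; since conjugation commutes with the matrix exponential this is harmless, but it is worth stating explicitly that you are establishing $P^{-1}AP=\exp(\theta P^{-1}[\mathbf n]_\times P)$ and then undoing the conjugation, rather than silently identifying $[\mathbf n]_\times$ (which Definition~\ref{crossmatrix} fixes in the canonical basis) with its expression in the new frame.
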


To introduce the model and some important notations, we first start with a simple stochastic differential equation (SDE) modeling a rotation matrix~$A(t)∈SO_3(ℝ)$ trying to align with another fixed rotation matrix~$A_0∈SO_3(ℝ)$, with strength of alignment~$ν>0$, and subject to angular noise of intensity~$τ>0$ :
\begin{equation}
  \label{simpleSDE}
  \d A=-ν∇_A(\tfrac12∥A-A_0∥^2) \d t + 2 \sqrt{τ} P_{T_{A}}\circ \d B_t.
\end{equation}

To give a meaning to the previous equation, let us describe the terms one by one, from left to right. We need to define a metric on~$SO_3(ℝ)$ in order to define the gradient~$∇_A$. As it is usually the case in~$SO_3(ℝ)$, we will take the metric induced by the scalar product in~$M_3(ℝ)$ given by
\begin{equation}\label{dotSO3}
  A·B=\frac12\mathrm{Tr}(AB^{\top}).
\end{equation}
One of the reasons to take this metric is that the geodesic distance between a matrix~$A∈SO_3(ℝ)$ and its composition by a rotation matrix of angle~$θ∈[0,π]$ is exactly~$θ$. Said differently, if~$\mathbf{n}∈𝕊_2$, then the curve~$θ∈ℝ↦R(θ,\mathbf{n})A$ given by the formula~\eqref{eqRodrigues} is a geodesic travelled at unit speed. The other reason is that the map~$\mathbf{u}∈ℝ^3↦[\mathbf{u}]_×$ given by Definition~\ref{crossmatrix} is an isometry from~$ℝ^3$ to the antisymmetric matrices (which is the Lie algebra of~$SO_3(ℝ)$). The norm~$∥A-A_0∥$ in the SDE~\eqref{simpleSDE} is the one associated to this scalar product. The operator~$P_{T_{A}}$ is the orthogonal projection on the tangent space of~$SO_3(ℝ)$ at~$A$, given by~$P_{T_A}H=\frac12(H-AH^{\top}A)$. The notation~$\circ$ in the SDE~\eqref{simpleSDE} means that it must be understood in the Stratonovich sense, and the Brownian motion~$B_t$ is a~$3×3$ matrix whose entries are independent real standard Brownian motions\footnote{Note that this does not give a standard Brownian motion on the euclidean space~$M_3(ℝ)$, equipped with this scalar product, but~$\widetilde{B}_t=\sqrt{2}B_t$ is such a standard Brownian motion. The SDE for a standard Brownian motion on the manifold, with generator~$\frac12Δ_A$, would be~$\d A=P_{T_A}\circ \d \widetilde B_t$, which explain the choice of~$2\sqrt{τ}$ instead of the usual~$\sqrt{2τ}$ in the SDE~\eqref{simpleSDE} so that the Fokker--Planck equation~\eqref{FPlinear} has the simplest coefficients.}. This ensures that the matrix~$A$ stays on~$SO_3(ℝ)$ for all time, and this is the usual way of defining SDEs on manifolds (we refer to~\cite{hsu2002stochastic} for a reference on this topic). Therefore the first term in the right-hand side of~\eqref{simpleSDE} may be written~$ν∇_A(A·A_0)$ since~$∥A∥^2=\frac32$ whenever~$A∈SO_3(ℝ)$. Finally, the law~$t↦μ(t,·)$ (with values in~$\mathcal{P}(SO_3(ℝ))$, the set of probability measures on~$SO_3(ℝ)$) of such a process satisfies the following Fokker--Planck equation:
\begin{equation}
  \label{FPlinear}
  ∂_tμ+ν∇_A·(∇_A(A·A_0)μ)=τΔ_Aμ,
\end{equation}
where~$∇_A·$ and~$Δ_A$ are the divergence and Laplace-Beltrami operators on~$SO_3(ℝ)$. Up to a time rescaling, we see that the important parameter is~$κ=\frac{ν}{τ}$, and we can then without loss of generality study the following PDE, obtained by replacing~$τ$ by~$1$ and~$ν$ by~$κ$ in~\eqref{FPlinear}:
\begin{equation}
  \label{FPlinearReduced}
  ∂_tμ=-κ∇_A·(∇_A(A·A_0)μ)+Δ_Aμ=∇_A·\Big[\exp(κ\,A·A_0)∇_A\Big(\frac{μ}{\exp(κ\, A·A_0)}\Big)\Big].
\end{equation}
In view of the above formulation, we now define the generalized von Mises distribution (a probability measure) on~$SO_3(ℝ)$ of parameter~$J∈M_3(ℝ)$ by
\begin{equation}
  \label{defVM}
  M_J(A)=\frac1{\mathcal{Z}(J)}\exp(J·A)\text{, where }\mathcal{Z}(J)=∫_{SO_3(ℝ)}\exp(J·A)\d A,
\end{equation}
the normalized volume form on~$SO_3(ℝ)$ being its Haar probability measure (this comes from invariance of the metric with respect to left or right multiplication by a given rotation matrix). Therefore it is for instance easy to see that~$\mathcal{Z}(κA_0)$ only depends on~$κ$ when~$A_0∈SO_3(ℝ)$. With this notation, we can multiply the PDE~\eqref{FPlinearReduced} by~$\frac{μ}{M_{κA_0}}$, integrate by parts and take advantage of the fact that the integral of~$μ$ on~$SO_3(ℝ)$ remains constant in time, to obtain
\begin{equation}
  \frac12\frac{\d}{\d t}∫_{SO_3(ℝ)}|\tfrac{μ}{M_{κA_0}}-1|^2M_{κA_0}\d A=-∫_{SO_3(ℝ)}∥∇_A(\tfrac{μ}{M_{κA_0}}-1)∥^2M_{κA_0}\d A.
\end{equation}
Together with a weighted Poincaré inequality on~$SO_3(ℝ)$, this shows that the solution to the PDE~\eqref{FPlinearReduced} converges exponentially fast to the von Mises distribution~$M_{κA_0}$. Let us remark that when~$κ$ is small (strong noise, or weak alignment), this distribution tends to be uniform on~$SO_3(ℝ)$, and when~$κ$ is large (strong alignment or low level of noise), it is concentrated around the maximizer of~$A↦A·A_0$, which is exactly~$A_0$, as expected.

Let us finish this subsection by describing a numerical discretization of the SDE~\eqref{simpleSDE}. By using the fact that~$∇_A(A·A_0)=P_{T_A}A_0$, and denoting by~$Π$ the orthogonal projection on~$SO_3(ℝ)$ (well-defined in a neighborhood of the manifold), a naive projected Euler-Naruyama scheme would read as follows:
\begin{equation}
  \label{naiveScheme}
  A(t+Δt)≈Π(A(t)+ν Δt\, P_{T_{A(t)}}A_0+\sqrt{Δt}\,2\sqrt{τ}P_{T_{A(t)}}{\mathcal{N}_{9}}),
\end{equation}
where~$\mathcal{N}_9$ is a three by three matrix whose~$9$ entries are independent samples of standard Gaussian distribution. One could even remove the projections on the tangent plane and use this model, easy to describe as a starting point : “Start from~$A∈SO_3(ℝ)$, move with step~$νΔt$ in the direction of the target~$A_0$, add some noise of intensity~$2\sqrt{τΔt}$ and project the result back on~$SO_3(ℝ)$”. However, there is a way to avoid sampling~$9$ entries per step and to take advantage of the Lie group structure of~$SO_3(ℝ)$ instead of computing the projection on~$SO_3(ℝ)$ (which is the polar decomposition of matrices and may have some cost). Indeed, the right-hand side of the scheme~\eqref{naiveScheme} can be written
\begin{equation*}
  Π(I_3+\tfrac12 νΔt\, [A_0A(t)^{\top}-A(t)A_0^{\top}] + \sqrt{τΔt}[{\mathcal{N}_9}A(t)^{\top}-A(t){\mathcal{N}_9}^{\top}])A(t).
\end{equation*}
Since a rotation of a standard Gaussian vector is still a standard Gaussian vector, one can see that the matrix~${\mathcal{N}_9}A(t)^{\top}$ is also a matrix whose~$9$ entries are independent samples of standard Gaussian distribution. Therefore~${\mathcal{N}_9}A(t)^{\top}-A(t){\mathcal{N}_9}^{\top}$ is an antisymmetric matrix whose independent entries are samples of centered Gaussian distribution of variance~$2$. It is then a matrix of the form~$\sqrt{2}[\boldsymbol{η}]_×$ (see Definition~\ref{crossmatrix}), where~$\boldsymbol{η}$ is a standard Gaussian vector in~$ℝ^3$. When~$H$ is a small antisymmetric matrix, a consistent approximation to~$Π(I_3+H)$ is given by~$\exp(H)$ and can be computed thanks to Rodrigues’ formula~\eqref{eqRodrigues}. Therefore a numerical scheme consistent with the naive scheme~\eqref{naiveScheme} is given by
\begin{equation}
  \label{LieScheme}
  A(t+Δt)≈\exp(\tfrac12 νΔt\, [A_0A(t)^{\top}-A(t)A_0^{\top}]+\sqrt{2τΔt}[\boldsymbol{η}]_×)A(t),
\end{equation}
where~$\boldsymbol{η}$ is a standard Gaussian vector in~$ℝ^3$.

\subsection{A system of SDEs and its numerical simulations}

We are now ready to introduce our model. In the article~\cite{degond2017new}, we considered~$N$ individuals located at positions~$X_i∈ℝ^3$ for~$1⩽i⩽N$ and with body orientations~$A_i∈SO_3(ℝ)$, moving at unit speed in the direction of their first vector~$A_i\mathbf{e}_1$ and aligning their orientations with their neighbours, as in the simple SDE~\eqref{simpleSDE}. This could take the following form\footnote{Actually, the model studied in~\cite{degond2017new} (which does not present the phenomenon of phase transition we are studying here) is a little bit more involved: each particle first chose an average target and aligns with it, instead of averaging the “forces of alignment” as it is the case in the system of SDEs~\eqref{SDEsystem-space}.}:
\begin{equation}
  \label{SDEsystem-space}
  \begin{cases}\d X_k=A_k\mathbf{e}_1 \d t\\
      \d A_k=-\sum\limits_{j=1}^Nν_{j,k}∇_{A_k}(\frac12∥A_k-A_j∥^2) \d t + 2 \sqrt{τ} P_{T_{A_k}}\circ \d B_{t,k},
\end{cases}
\end{equation}
where~$ν_{j,k}$ is the intensity at which particle~$k$ aligns with particle~$j$, and which may depend for instance on the distance~$∥X_j-X_k∥$ between the particles. We consider here a much simpler model, homogeneous in space, so we only look at~$N$ rotation matrices~$(A_i)_{1⩽i⩽n}∈SO_3(ℝ)$, with the same intensity~$\frac{ρ}N$ of alignment between any pair of particles. We are therefore interested in the following system of SDEs, using the fact that~$∇_A(\frac12∥A-A_0∥^2)=-∇_A(A·A_0)=-P_{T_A}A_0$ :

\begin{equation*}
  ∀k∈1…N, \quad \d A_k=\frac{ρ}{N}\sum\limits_{j=1}^NP_{T_{A_k}}A_j \d t + 2 \sqrt{τ} P_{T_{A_k}}\circ \d B_{t,k}.
\end{equation*}

In this model, when all the rotation matrices are close to a given one~$A_0$, the behaviour of the system can be expected to be similar to the one of the simple SDE~\eqref{simpleSDE}, and we may expect the matrices to concentrate if the alignment intensity~$ρ$ is high (or~$τ$ is low). Conversely, if they are not concentrated around some target, the average of the alignment forces is small and the noise level may prevent the matrices to align if~$ρ$ is low (or~$τ$ is high). From now on, up to rescaling time (and dividing~$ρ$ by~$τ$), we consider the case~$τ=1$ and we denote by~$J$ the average “flux”, so our system has the following form:

\begin{equation}
  \label{SDEsystem}
  \begin{cases}
    \d A_k=P_{T_{A_k}}J \d t + 2 P_{T_{A_k}}\circ \d B_{t,k},\quad (1⩽k⩽N)\\
    J(t)=\frac{ρ}{N}\sum\limits_{j=1}^NA_j(t).
\end{cases}
\end{equation}

We are then interested in the different behaviours of the system~\eqref{SDEsystem} for different values of~$ρ$. One way to measure how much matrices are concentrated is to compute the variance~$⟨∥A-⟨A⟩∥^2⟩$ (where we denote~$⟨h(A)⟩=\frac1N\sum_{j=1}^Nh(A_j)$ for any function~$h$). This nonnegative quantity is equal to~$⟨∥A∥^2⟩-∥⟨A⟩∥^2=\frac32-∥\frac{J}{ρ}∥^2$, which implies that if we define the order parameter~$c(t)$ by
\begin{equation}
  \label{def-c}
  c(t)=\frac{\sqrt2}{\sqrt3ρ}∥J(t)∥,
\end{equation}
we obtain a quantity between~$0$ (when the variance is maximal) and~$1$ (the variance is~$0$, all matrices are the same). To give a numerical illustration of the phenomenon we are interested in, we use a scheme similar to the scheme~\eqref{LieScheme} of the previous subsection: we take~$N$ matrices~$A_k∈SO_3(ℝ)$ for~$1⩽k⩽N$), a time step~$Δt$, and at each time iteration, we compute~$J=\frac{ρ}{N}\sum_{j=1}^NA_j$ and we update each~$A_k$ for~$1⩽k⩽N$ with the matrix
\begin{equation*}
  \exp(\tfrac12 Δt\, [J A_k^{\top}- A_k J^{\top}]+\sqrt{2Δt}[\boldsymbol{η}_k]_×)A_k,
\end{equation*}
where~$(\boldsymbol{η}_k)_{1⩽k⩽N}$ are independent samples of a standard Gaussian vector in~$ℝ^3$.

Figure~\ref{fig-two-examples} depicts the time evolution of the order parameter~$c(t)$ given by the formula~\eqref{def-c} for two realisations of this numerical scheme. In both cases the number of particles is~$N=500$, the time step is~$Δt=0.04$ and we run the simulation for~$100$ time iterations. In the top-left part of Figure~\ref{fig-two-examples} where we took~$ρ=1$, even if we started with all the particles in the same position (order parameter equal to~$1$), as time evolves, the order parameter becomes very small. In the top-right part, with~$ρ=10$, even if the particles were uniformly sampled on~$SO_3(ℝ)$ (order parameter close to~$0$), as time evolves, the order parameter stabilizes around a quite high value, indicating that the matrices are concentrated around a given rotation matrix. This indicates that a phase transition phenomenon is occurring with respect to the parameter~$ρ$. However, for some intermediate values of~$ρ$, as in the bottom part of Figure~\ref{fig-two-examples} where~$ρ=5$, two different behaviours may happen: starting with concentrated particles lead to an order parameter stabilizing around a non-zero value, while the configuration starting with particles uniformly sampled on~$SO_3(ℝ)$ stays with an order parameter close to~$0$ as time evolves.

\begin{figure}[h!]
  \begin{center}
\includegraphics[width=7cm]{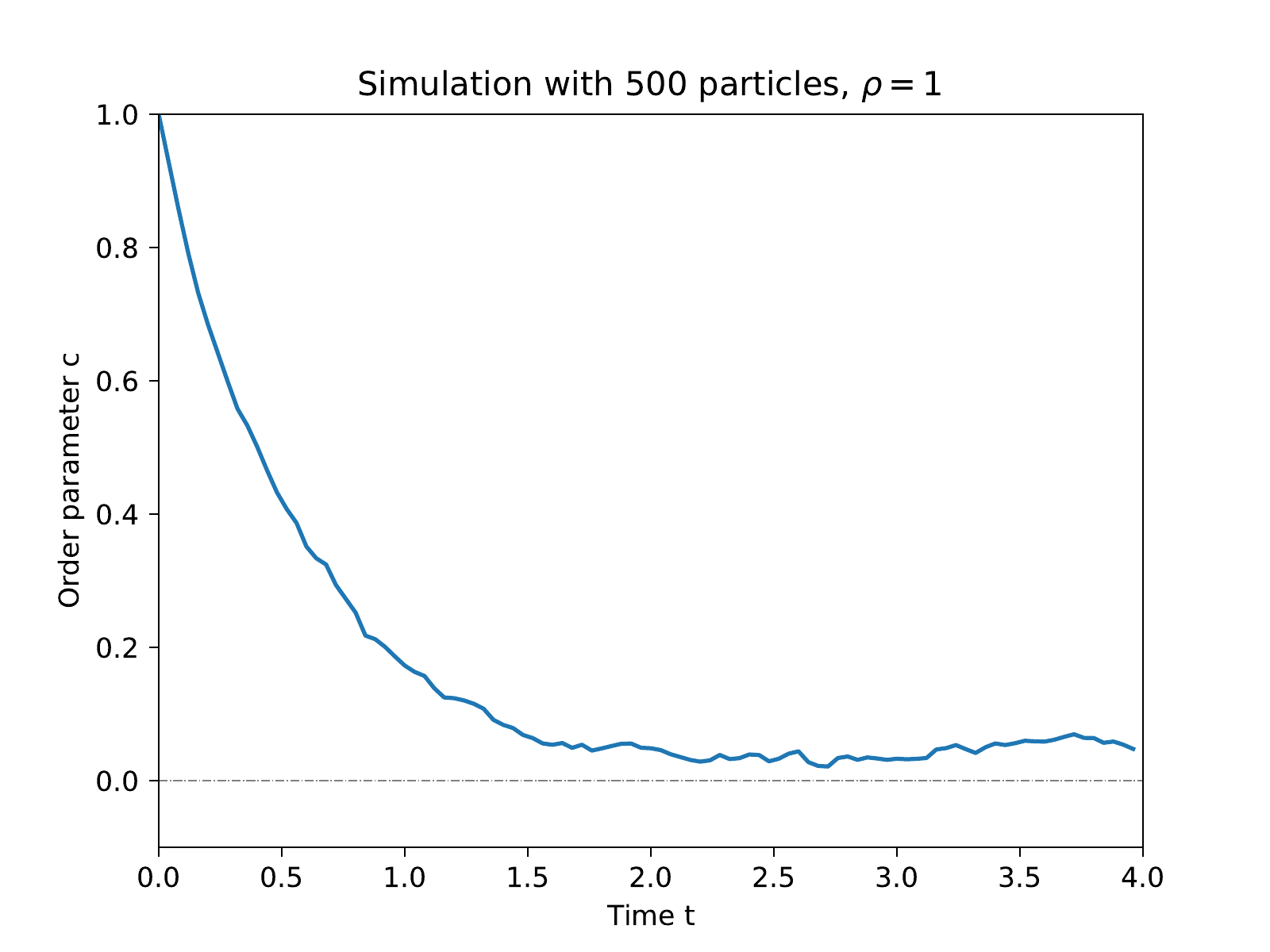}
\includegraphics[width=7cm]{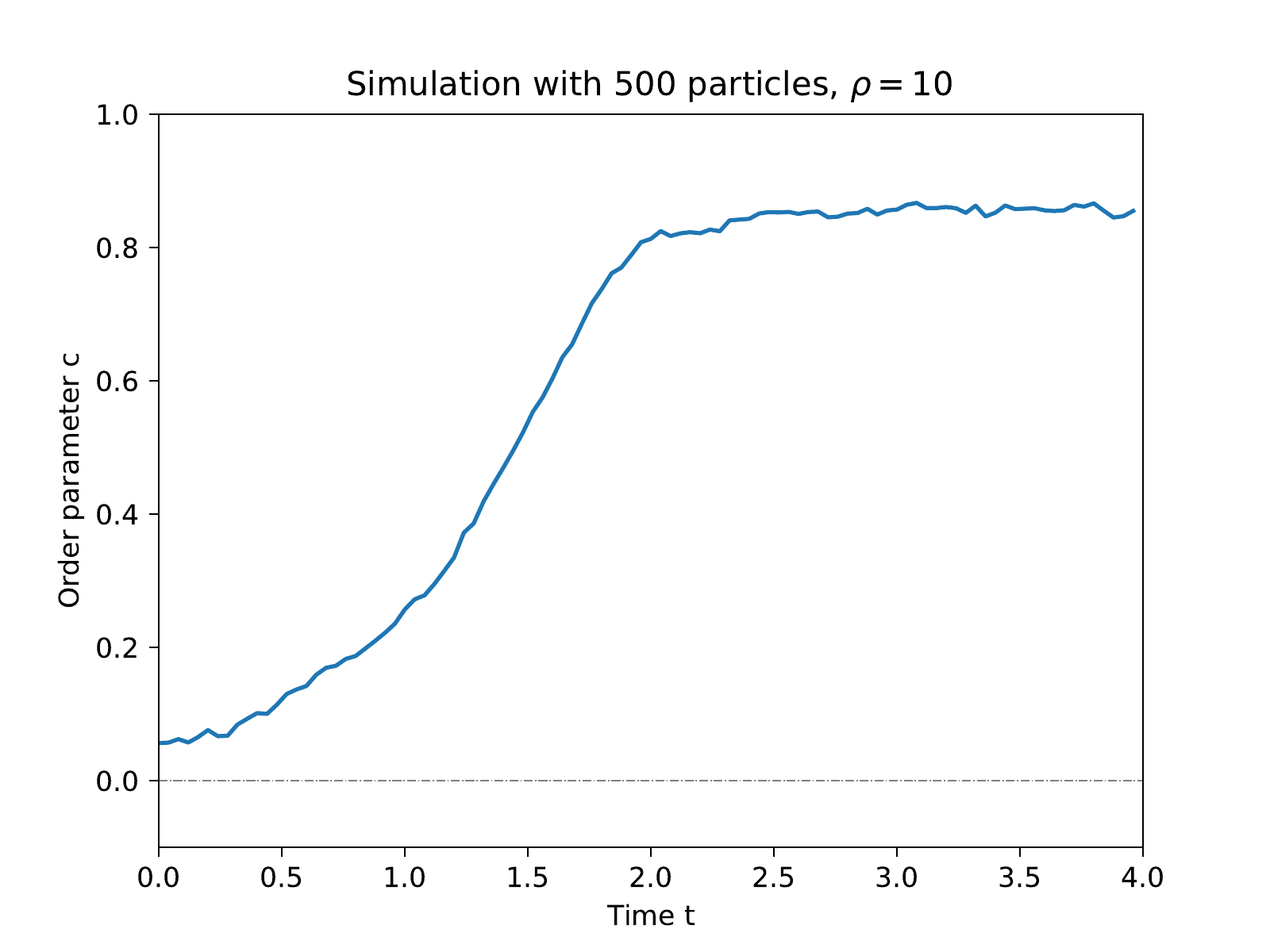}

\includegraphics[width=7cm]{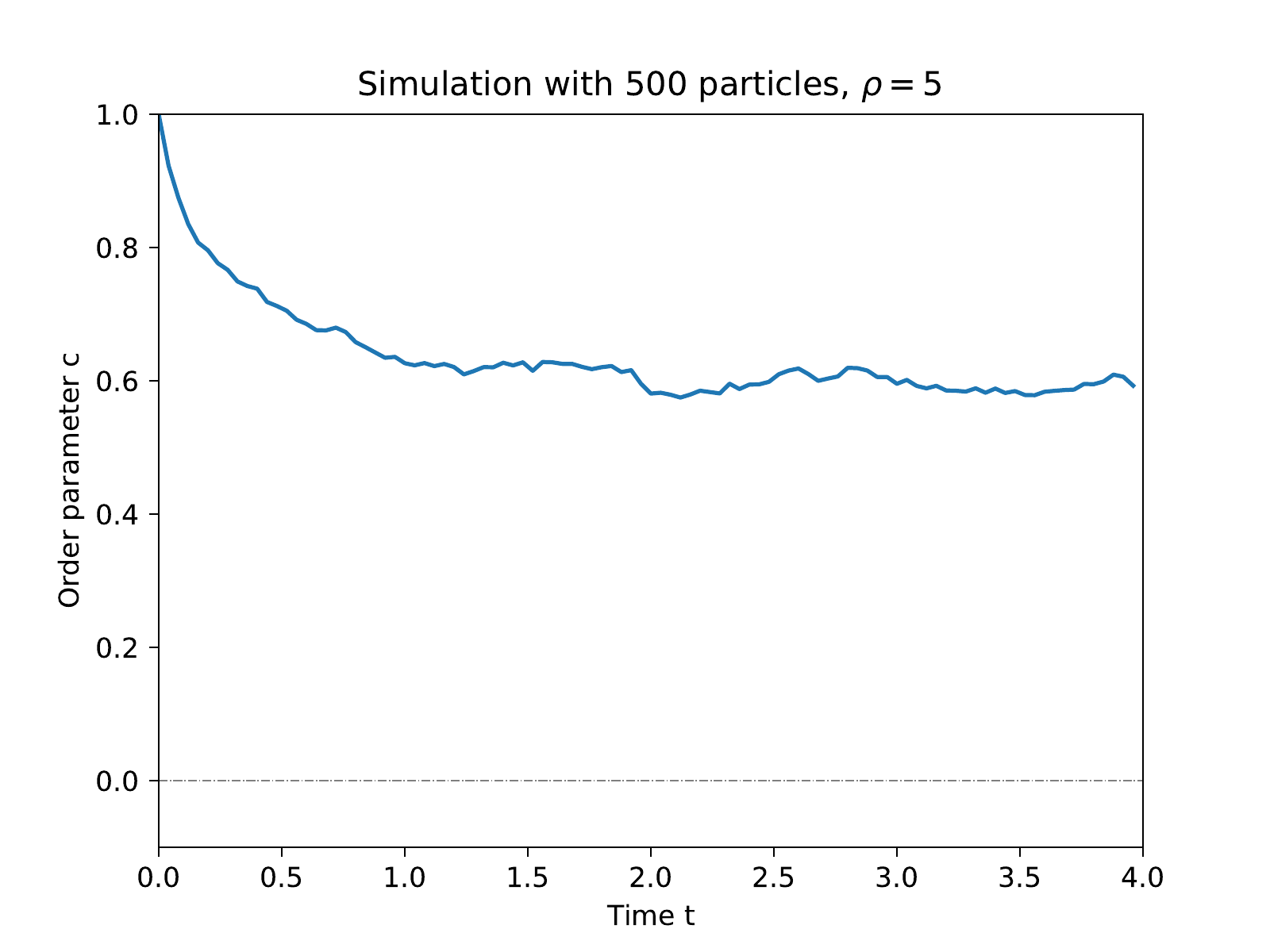}
\includegraphics[width=7cm]{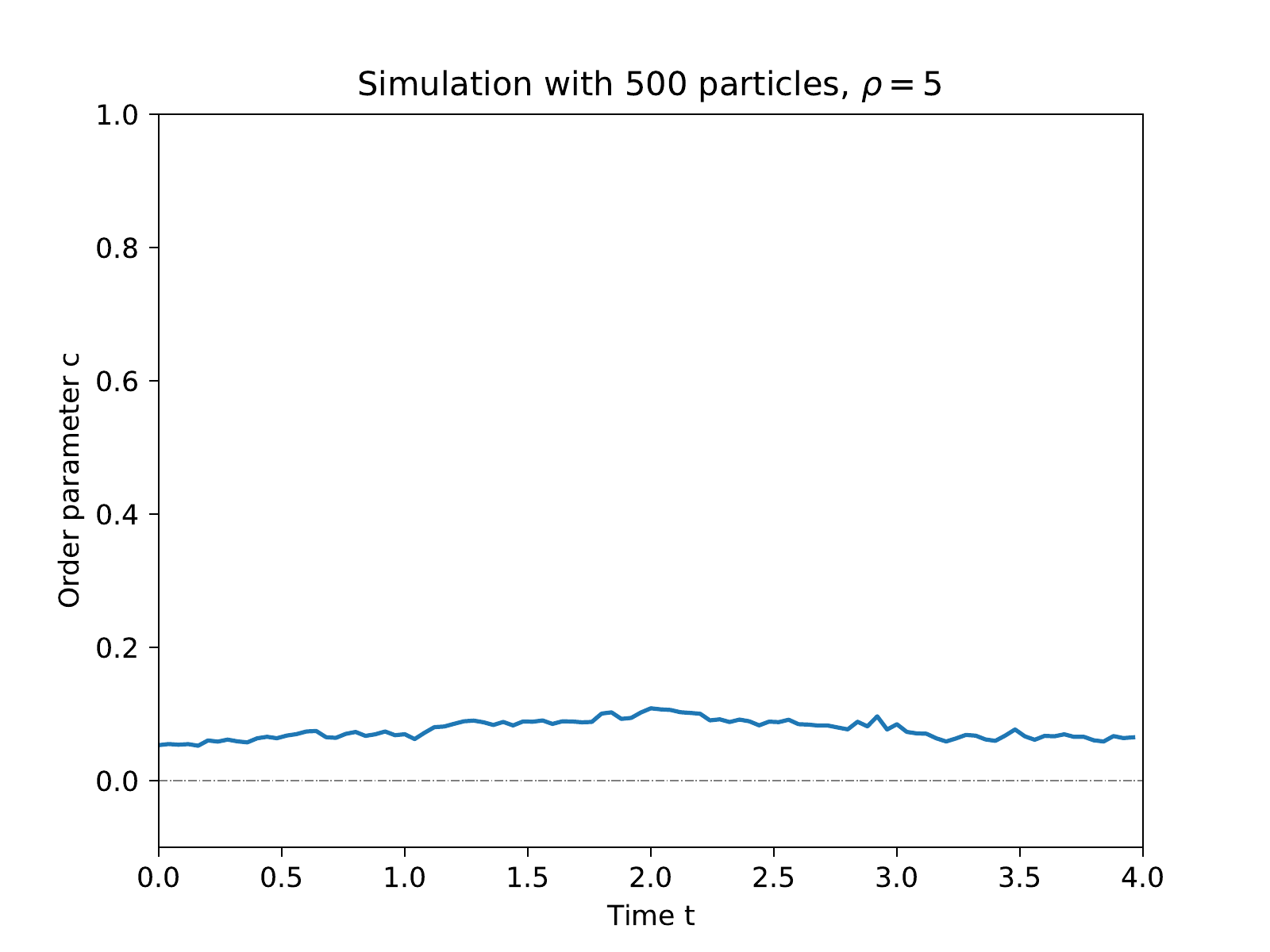}
\caption{\label{fig-two-examples}Time evolution of the order parameter in four situations.}
\end{center}
  
\end{figure}

In order to obtain a more precise illustration of this phenomenon, we ran~$500$ such simulations with various values of the parameter~$ρ$ and different initial conditions\footnote{For a better illustration, the parameter~$ρ$ and the initial order parameter~$c$ are not uniformly sampled, in order to see more points in the region of interest.}, still with~$N=500$ and~$Δt=0.04$, for~$500$ time iterations. Figure~\ref{fig-scatterplots} depicts the initial order parameters~$c$ and strengths~$ρ$, and their value after~$500$ iterations ($t=20$). We clearly see two thresholds for~$ρ$. The first threshold that we will denote~$ρ^*$, is such that for all simulations with~$ρ<ρ^*$, the order parameter seems to be close to~$0$ for large times. The second threshold, that we will denote~$ρ_c$ (with~$ρ^*<ρ_c$), is such that for all simulations with~$ρ>ρ_c$, the order parameter does not stay close to~$0$ for large times, and stabilizes around a quite high value. In the intermediate regime~$ρ^*<ρ<ρ_c$, both behaviours occur. This is what is called first-order (or discontinuous) phase transition: the order parameter does not vary continuously when going from one behaviour to the other.  

\begin{figure}[h!]
  \begin{center}
\includegraphics[width=7cm]{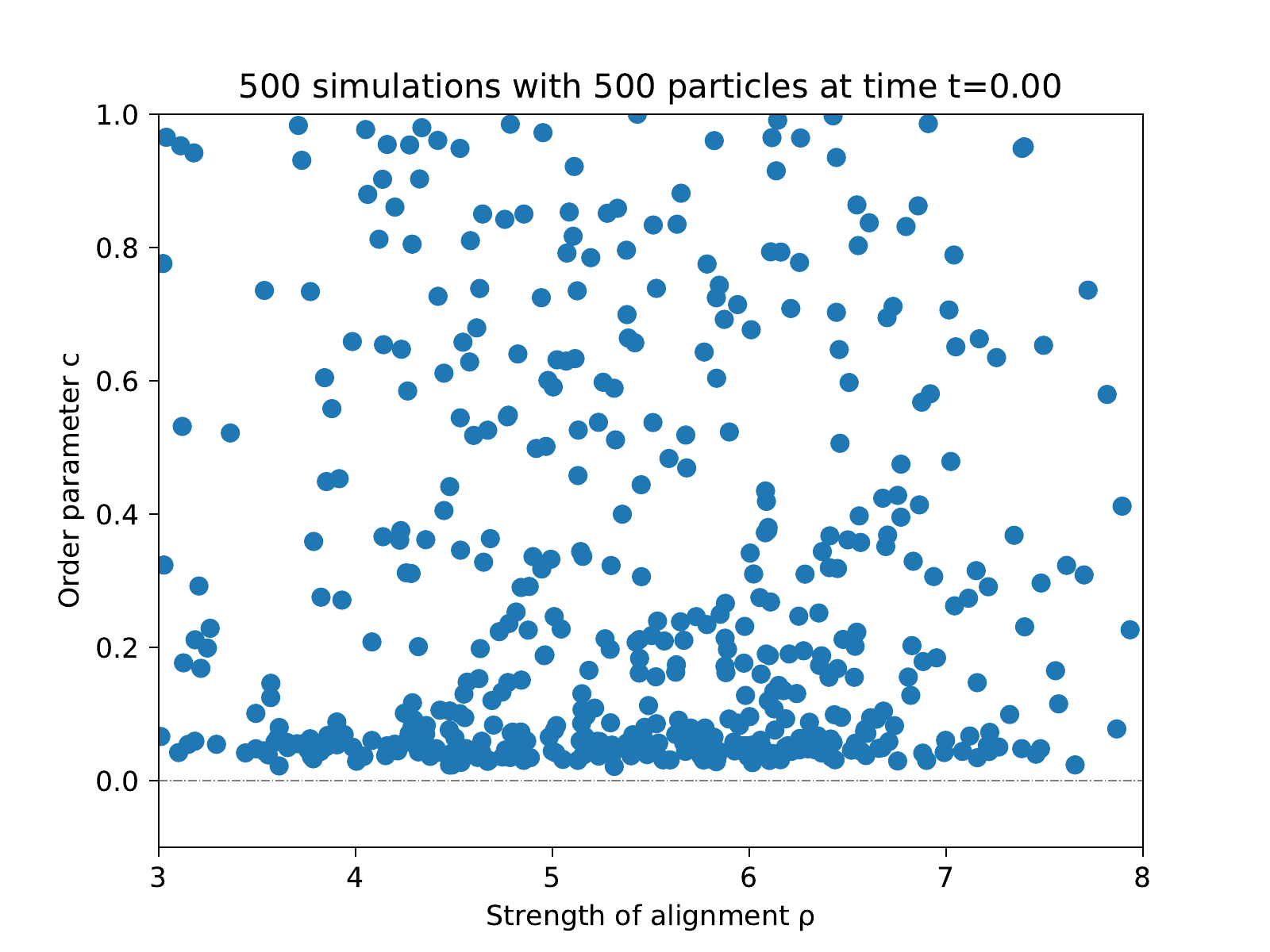}
\includegraphics[width=7cm]{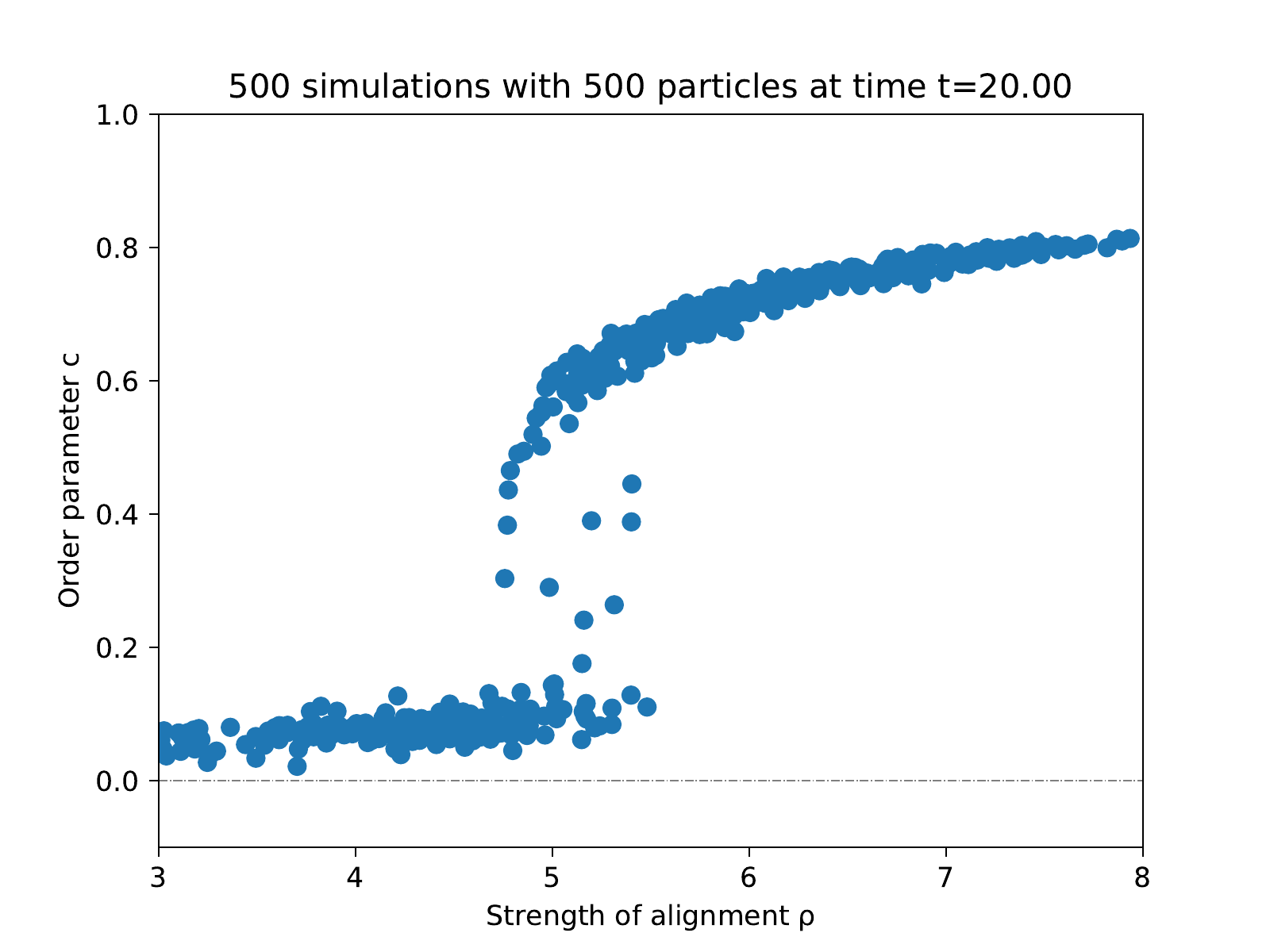}
\caption{\label{fig-scatterplots} Numerical illustration of a first-order phase transition.}
\end{center}
\end{figure}

The aim of the next sections is to present a rigorous mathematical description of this phenomenon in the framework of a kinetic equation corresponding to the limiting behaviour of the system of SDEs~\eqref{SDEsystem} when~$N→∞$. 

\section{Mean-field limit and compatibility equation}
\label{sec-meanfield}
Let us first consider the first part of the system~\eqref{SDEsystem}, as if~$t∈ℝ↦J(t)∈M_3(ℝ)$ was a prescribed regular function:
\begin{equation}
  \label{SDEJ}
  \d A=P_{T_{A}}J \d t + 2 P_{T_{A}}\circ \d B_{t}.
\end{equation}
As before for the simple SDE~\eqref{simpleSDE}, the law~$t↦μ(t,·)$ of such a stochastic process would satisfy the following (linear) Fokker--Planck equation:
\begin{equation}
  \label{FPJ}
  ∂_tμ=-∇_A·(μ\,P_{T_A}J)+Δ_Aμ=∇_A·\Big[M_J(A)∇_A\Big(\frac{μ}{M_J(A)}\Big)\Big],
\end{equation}
where the definition of the generalized von Mises distribution~$M_J$ is given by the formula~\eqref{defVM}. Let us now suppose that several such processes~$A_k$ satisfying the SDE~\eqref{SDEJ} were independently drawn, with different independent Brownian motions~$B_{t,k}$, and independent initial conditions following a probability measure~$μ_0$ on~$SO_3$. Their law at time~$t$ would be given by~$μ(t,·)$, solution of the Fokker--Planck equation~\eqref{FPJ} with initial condition~$μ_0$ by the law of large numbers the average~$\frac1{N}\sum_{k=1}^NA_k(t)$ would converge to the expectation of one of this process, that we call~$\mathcal{J}[μ(t,·)]$. More generally, we define~$\mathcal{J}[f]$ for any finite measure~$f$ on~$SO_3(ℝ)$ (not necessarily a probability measure, it may also be a signed measure):
\begin{equation}
  \label{def-J}
  \mathcal{J}[f]=∫_{SO_3(ℝ)}A\,f(A)\d A.
\end{equation}

To deal with the system~\eqref{SDEsystem}, where~$J(t)=\frac{ρ}{N}\sum_{k=1}^NA_k(t)$ is not prescribed but depends on all the particles, we cannot expect the particles~$A_k$ to behave independently. However one can show that in the limit~$N→∞$, their behaviour is close to independent particles. This is called the propagation of chaos property, and we refer to~\cite{sznitman1991topics} for an introduction on this subject. One of the typical results in this theory is that the empirical measure of the particle system converges to a solution to the (now nonlinear) Fokker--Planck equation corresponding to~\eqref{FPJ} with~$J(t)=ρ\mathcal{J}[μ(t,·)]$:

\begin{proposition} If~$A_{k,0}$ are independent random rotation matrices distributed according to the probability measure~$μ_0$, then the empirical measure~$μ^N(t)=\frac1{N}\sum_{k=1}^Nδ_{A_k(t)}$ associated to the solution of the system of SDEs~\eqref{SDEsystem} converges (in Wasserstein distance) to the solution~$μ$ of the following nonlinear Fokker--Planck equation, with initial condition~$μ_0$ :
\begin{equation}
  \label{FPJmu}
  ∂_tμ=-ρ\,∇_A·(μ\,P_{T_A}\mathcal{J}[μ])+Δ_Aμ.
\end{equation}
The convergence is uniform on~$[0,T]$ for all~$T>0$.
\end{proposition}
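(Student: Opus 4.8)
The plan is to use the classical coupling method for propagation of chaos, following Sznitman~\cite{sznitman1991topics}. As a preliminary reduction I would rewrite the Stratonovich system~\eqref{SDEsystem} in Itô form: since $P_{T_A}H=\frac12(H-AH^\top A)$ depends smoothly on $A$ and $SO_3(\mathbb{R})$ is a compact embedded submanifold of $M_3(\mathbb{R})$, the Itô--Stratonovich correction is a smooth vector field on $SO_3(\mathbb{R})$, so the dynamics take the form $\d A_k=\bigl(P_{T_{A_k}}J(t)+b(A_k)\bigr)\d t+2\,P_{T_{A_k}}\,\d B_{t,k}$ with $b$ smooth; by compactness, $b$ and $A\mapsto P_{T_A}$ are bounded and globally Lipschitz on $SO_3(\mathbb{R})$, which in particular gives strong existence and uniqueness for~\eqref{SDEsystem}. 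Throughout I would use the ambient distance $d(A,B)=\|A-B\|$ from~\eqref{dotSO3}, which on the compact manifold $SO_3(\mathbb{R})$ is bi-Lipschitz equivalent to the geodesic distance and dominates (up to a constant) the $2$-Wasserstein distance on $\mathcal{P}(SO_3(\mathbb{R}))$.

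First I would construct and study the associated nonlinear McKean--Vlasov process $\d\bar A=\bigl(ρ\,P_{T_{\bar A}}\mathcal{J}[\mathrm{Law}(\bar A(t))]+b(\bar A)\bigr)\d t+2\,P_{T_{\bar A}}\,\d B_t$ with $\bar A(0)\sim μ_0$. The key point is that, since $\|A\|^2=\frac32$ on $SO_3(\mathbb{R})$, the map $f\mapsto\mathcal{J}[f]$ is bounded and $1$-Lipschitz from $(\mathcal{P}(SO_3(\mathbb{R})),W_1)$ to $M_3(\mathbb{R})$; combined with the Lipschitz bounds on $b$ and on $A\mapsto P_{T_A}$, a Banach fixed point argument on $C([0,T],\mathcal{P}_2(SO_3(\mathbb{R})))$ (equivalently a Picard iteration on path space) produces a unique solution, whose time-marginals solve the nonlinear Fokker--Planck equation~\eqref{FPJmu} with initial datum $μ_0$; this incidentally yields uniqueness for~\eqref{FPJmu}.

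Then comes the main step, a synchronous coupling: I would introduce $N$ independent copies $\bar A_1,\dots,\bar A_N$ of the nonlinear process, driven by the \emph{same} Brownian motions $B_{t,k}$ and the \emph{same} initial data $A_{k,0}$ as in~\eqref{SDEsystem}, and control $u(T):=\mathbb{E}\bigl[\sup_{t\le T}d(A_1(t),\bar A_1(t))^2\bigr]$ (independent of the index by exchangeability). Subtracting the two sets of equations, the contributions of $b$, of $A\mapsto P_{T_A}$ in the diffusion term, and of the base-point dependence of the projection in the drift are handled by their Lipschitz bounds (using the Burkholder--Davis--Gundy inequality for the stochastic integrals), while the interaction term $ρ\,P_{T_{A_k}}\mathcal{J}[μ^N_t]-ρ\,P_{T_{\bar A_k}}\mathcal{J}[\bar μ_t]$, with $\bar μ_t=\mathrm{Law}(\bar A_1(t))$, splits into a piece bounded by $C\,d(A_k,\bar A_k)$, a piece bounded by $\frac{C}{N}\sum_j d(A_j,\bar A_j)$, and a remainder $ρ\,P_{T_{\bar A_k}}\bigl(\tfrac1N\sum_j\bar A_j(t)-\mathbb{E}[\bar A_1(t)]\bigr)$. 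Since the $\bar A_j$ are i.i.d.\ and $\mathbb{E}[\bar A_1(t)]=\mathcal{J}[\bar μ_t]$, this remainder is a centred average of independent bounded semimartingales with uniformly bounded characteristics, so its squared supremum over $[0,T]$ has expectation $O(1/N)$ (by Doob's inequality for the martingale part and a crude bound for the finite-variation part). Grönwall's lemma then yields $u(T)\le C_T/N$.

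Finally I would conclude via the triangle inequality $W_2(μ^N_t,μ_t)\le\bigl(\tfrac1N\sum_k d(A_k(t),\bar A_k(t))^2\bigr)^{1/2}+W_2(\bar μ^N_t,μ_t)$, where $\bar μ^N_t=\tfrac1N\sum_kδ_{\bar A_k(t)}$: the expectation of the first term is $O(N^{-1/2})$, uniformly on $[0,T]$, by the previous step, and the second term is the $W_2$-distance between the empirical measure of $N$ i.i.d.\ $μ_t$-distributed variables on the compact space $SO_3(\mathbb{R})$ and $μ_t$ itself, which tends to $0$ in expectation for each fixed $t$; uniformity in $t$ follows from a standard equicontinuity argument, since $t\mapsto\bar μ^N_t$ and $t\mapsto μ_t$ are Hölder-$\tfrac12$ in $W_2$ uniformly in $N$ (the coefficients being bounded), so one discretises $[0,T]$ and applies a union bound. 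This gives $\mathbb{E}[\sup_{t\le T}W_2(μ^N_t,μ_t)]\to0$. The step I expect to be the most delicate is precisely this passage from the manifold-valued Stratonovich dynamics to a setting with genuinely global Lipschitz coefficients — identifying the Itô correction correctly and running the coupling estimates either intrinsically or within a fixed tubular neighbourhood of $SO_3(\mathbb{R})$ — together with promoting the pointwise-in-time law of large numbers for empirical measures to the uniform-in-$t$ statement; both are routine but require some care.
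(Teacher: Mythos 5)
Your proposal is correct and follows essentially the same strategy as the paper: well-posedness of the McKean--Vlasov coupling process via a Picard/fixed-point argument, a synchronous coupling with the same Brownian motions and initial data, a Grönwall estimate of the form $\mathbb{E}[\|A_k-\bar A_k\|^2]\leq e^{CT}/N$, and conclusion via the law of large numbers for the empirical measure of the i.i.d.\ coupling processes. The paper only sketches these steps (citing Sznitman and, for the manifold adaptation, Bolley--Cañizo--Carrillo for the Vicsek model on the sphere), so the extra care you take with the Stratonovich-to-Itô correction and the uniform-in-time law of large numbers is a useful elaboration rather than a departure.
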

\begin{proof}
  We will not provide the proof in detail here, as it follows the classical theory of propagation of chaos for coupled drift-diffusion processes, but we will recall some important steps. It has to be adapted to the framework of SDEs on a manifold, but this is not a real problem in this compact case (see for instance~\cite{bolley2012meanfield} in the case of the Vicsek model on the sphere). Let us recall the coupling argument such as the one in~\cite{sznitman1991topics}. We start by proving the well-posedness of this following SDE (the coupling process):
  \begin{equation}
    \label{couplingProcess}
    \begin{cases}\d A = ρ\, P_{T_{A}}\mathcal{J}[μ] \d t + 2 P_{T_{A}}\circ \d B_{t},\\μ(t,·)\text{ is the law of } A(t).
    \end{cases}
  \end{equation}
  The proof of this well-posedness, seen as a fixed point problem (either for the function~$J(t)=ρ\mathcal{J}[μ]$ or directly on the law~$μ$) is done thanks to a Picard iteration which leads to a contraction in the appropriate Wasserstein metric.

  We then construct independent solutions to this coupling process~$\overline{A}_k$ with independent Brownian motions~$B_{t,k}$ and initial conditions~$A_{k,0}$: the same as the Brownian motions and initial conditions used for the original system of SDEs~\eqref{SDEsystem}. All these processes~$\overline{A}_k$ have the same law, which is the solution~$μ$ of the Fokker--Planck equation~\eqref{FPJmu} starting with~$μ_0$. By the law of large numbers, the empirical distribution~$\overline{μ}^N$ of the coupling processes converges to~$μ$, and therefore it is enough to estimate the distance between~$\overline{μ}^N$ and~$μ^N$. This can be done by obtaining estimates of the form
  \begin{equation}
    \label{couplingEstimates}
    𝔼[∥A_k-\overline{A}_k∥^2]⩽\frac{\exp(CT)}{N},
  \end{equation}
  for all~$1⩽k⩽N$, which gives control on the~$2$-Wasserstein distance between~$\overline{μ}^N$ and~$μ^N$ on the time interval~$[0,T]$.
\end{proof}

We know want to study the long time behaviour of the nonlinear Fokker--Planck equation~\eqref{FPJmu}, that we will rewrite in function of~$f=ρ\,μ$ (in that case,~$ρ$ represents the total “mass” of~$f$). Since~$ρ\mathcal{J}[μ]=\mathcal{J}[f]$, it therefore has the following form, without any parameter on the equation:
\begin{equation}
  \label{FPJf}
  ∂_tf=-∇_A·(f\,P_{T_A}\mathcal{J}[f])+Δ_Af.
\end{equation}
This is an equation of the form~$∂_tf=\mathcal{C}[f]$ where~$\mathcal{C}[f]$ can also be written, using the definition~\eqref{defVM} of the von Mises distribution~$M_J$, under the following factorized form:
\begin{equation*}
  \label{def-Cf}
  \mathcal{C}[f]=∇_A·\Big[M_{\mathcal{J}[f]}(A)∇_A\Big(\frac{f}{M_{\mathcal{J}[f]}(A)}\Big)\Big].
\end{equation*}
In order to understand the long time behaviour of the solution, let us first look at stationary solutions.
\begin{proposition}\label{prop-equilibria}
  A measure~$f$ on~$SO_3(ℝ)$ is a stationary solution of the Fokker--Planck equation~\eqref{FPJf} if and only if it is of the form~$f=ρ M_J$, where~$J$ satisfies the following compatibility equation
\begin{equation}
  \label{compatJ}
  J=ρ\mathcal{J}[M_{J}].
\end{equation}
\end{proposition}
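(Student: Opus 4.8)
The plan is to characterize stationary solutions by exploiting the factorized form $\mathcal{C}[f]=\nabla_A\cdot\bigl[M_{\mathcal{J}[f]}(A)\,\nabla_A\bigl(\tfrac{f}{M_{\mathcal{J}[f]}(A)}\bigr)\bigr]$ already written down in the excerpt. First I would prove the easy direction: if $f=\rho M_J$ with $J=\rho\,\mathcal{J}[M_J]$, then $\mathcal{J}[f]=\rho\,\mathcal{J}[M_J]=J$, so $f/M_{\mathcal{J}[f]}=f/M_J=\rho$ is constant on $SO_3(\mathbb{R})$, hence $\nabla_A(f/M_{\mathcal{J}[f]})=0$ and $\mathcal{C}[f]=0$; thus $f$ is stationary. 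This also confirms that $f=\rho M_J$ integrates to $\rho$, matching the earlier convention that $\rho$ is the mass of $f$.

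For the converse, suppose $f$ is stationary, $\mathcal{C}[f]=0$. Set $J:=\mathcal{J}[f]$ and $g:=f/M_J$, so that $\nabla_A\cdot(M_J\nabla_A g)=0$ on the compact manifold $SO_3(\mathbb{R})$ without boundary. The standard energy argument is to multiply by $g$ and integrate by parts: $0=\int_{SO_3(\mathbb{R})} g\,\nabla_A\cdot(M_J\nabla_A g)\,\d A=-\int_{SO_3(\mathbb{R})} M_J\,\|\nabla_A g\|^2\,\d A$. Since $M_J>0$ everywhere, this forces $\nabla_A g\equiv 0$, so $g$ is a constant $\lambda$, i.e. $f=\lambda M_J$. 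Integrating over $SO_3(\mathbb{R})$ and using that $M_J$ is a probability measure gives $\lambda=\rho$ (the mass of $f$), so $f=\rho M_J$. Finally, from $J=\mathcal{J}[f]=\rho\,\mathcal{J}[M_J]$ we recover the compatibility equation~\eqref{compatJ}, completing the proof.

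Two points deserve care, and the second is the only real subtlety. The integration-by-parts step requires enough regularity: a priori $f$ is only a measure, but any stationary distributional solution of the uniformly elliptic equation $\nabla_A\cdot(M_J\nabla_A(f/M_J))=0$ with $M_J$ smooth and positive is automatically smooth by elliptic regularity, so the computation is legitimate — I would note this rather than belabor it. The more delicate point is \emph{well-definedness of $J=\mathcal{J}[f]$ and the whole bootstrap}: $\mathcal{J}$ is a bounded linear functional on finite measures (since $\|A\|^2=\tfrac32$ on $SO_3(\mathbb{R})$), so $J\in M_3(\mathbb{R})$ makes sense for any finite $f$; and once $f=\rho M_J$ is known, $M_J$ is genuinely smooth, closing the argument. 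I expect the main obstacle a reader might raise is precisely this regularity/measure-vs-function issue — so I would state explicitly that "stationary solution" is understood in the weak sense and invoke interior elliptic regularity (the equation has smooth, non-degenerate coefficients on a compact manifold) to upgrade $f$ to a classical solution before running the energy identity. Everything else is a short computation.
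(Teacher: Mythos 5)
Your proof is correct and follows the same route as the paper: in both cases the key step is to multiply $\mathcal{C}[f]=0$ by $f/M_{\mathcal{J}[f]}$ and integrate by parts to get $\int_{SO_3(\mathbb{R})}M_{\mathcal{J}[f]}\|\nabla_A(f/M_{\mathcal{J}[f]})\|^2\,\mathrm{d}A=0$, which forces $f=\rho M_{\mathcal{J}[f]}$, after which averaging against $A$ yields the compatibility equation (with the converse being a direct check). Your added remarks on elliptic regularity to justify the integration by parts for a measure-valued stationary solution are a sensible precaution that the paper leaves implicit, but they do not change the structure of the argument.
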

\begin{proof}Since we have, by integration by parts,
\begin{equation*}
  ∫_{SO_3(ℝ)}\frac{f}{M_{\mathcal{J}[f]}(A)}\,\mathcal{C}[f]\d A=-∫_{SO_3(ℝ)}\Big{∥}∇_A\Big(\frac{f}{M_{\mathcal{J}[f]}(A)}\Big)\Big{∥}^2M_{\mathcal{J}[f]}(A)\d A,
\end{equation*}
we immediately get that if~$\mathcal{C}[f]=0$ then~$f$ has to be proportional to~$M_{\mathcal{J}[f]}$, and the total mass of~$f$, denoted by~$ρ$, gives the coefficient of proportionality. Then, taking the average on~$SO_3(ℝ)$ against~$A$, thanks to the definition~\eqref{def-J} of~$\mathcal{J}$, we obtain, denoting~$J=\mathcal{J}[f]$:
\begin{equation*}
  J=\mathcal{J}[f]=\mathcal{J}[ρM_{\mathcal{J}[f]}]=ρ\mathcal{J}[M_{J}],
\end{equation*}
which is the compatibility equation for~$J$. Conversely, if~$J$ is a fixed point of this map~$J↦ρ\mathcal{J}[M_{J}]$, then setting~$f=ρ M_J$, we get~$\mathcal{J}[f]=J$, and then~$\mathcal{C}[f]=0$.
\end{proof}

Before obtaining a simple characterization of the solutions of the compatibility equation~\eqref{compatJ}, which is the object of the next section, let us give some more results on the solutions to the Fokker--Planck equation~\eqref{FPJf}.

\begin{proposition} For all nonnegative measure~$f_0$ on~$SO_3(ℝ)$, with total mass~$ρ>0$, there exists a unique weak solution~$f$ to the nonlinear Fokker--Planck equation~\eqref{FPJf} such that~$f(t,·)$ converges to~$f_0$ (in Wasserstein distance) as~$t→0$. This solution belongs to~$C^∞((0,+∞),SO_3(ℝ))$ and is positive for any positive time. Furthermore, we have the following uniform estimates in time: for all~$t_0>0$, and~$s∈ℝ$, the solution~$f$ is uniformly bounded on~$[t_0,+∞)$ in the the Sobolev space~$H^s(SO_3(ℝ))$. 
\end{proposition}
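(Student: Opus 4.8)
\emph{Existence, uniqueness, conservation of mass.} The plan is to combine the probabilistic well-posedness already at hand with the smoothing action of the heat flow on the compact manifold $SO_3(\mathbb{R})$, the key new ingredient being a uniform-in-time control of the drift coming from conservation of mass. Writing $f=\rho\,\mu$ turns~\eqref{FPJf} into~\eqref{FPJmu}, whose well-posedness for a probability-measure datum $\mu_0$ is exactly what the proof of the previous proposition established (Picard iteration yielding a contraction in Wasserstein distance for the coupling process~\eqref{couplingProcess}); this furnishes the unique weak solution $f$ with $f(t,\cdot)\to f_0$ in Wasserstein distance. Integrating~\eqref{FPJf} over $SO_3(\mathbb{R})$, both terms on the right being exact divergences on a manifold without boundary, gives $\int_{SO_3(\mathbb{R})}f(t,A)\,\d A\equiv\rho$; since $\|A\|^2=\tfrac32$ on $SO_3(\mathbb{R})$, this forces $\|\mathcal{J}[f(t)]\|\le\sqrt{3/2}\,\rho$ for \emph{every} $t\ge 0$, so the drift field $V(t,A):=P_{T_A}\mathcal{J}[f(t)]$, together with all of its $A$-derivatives, is bounded on $SO_3(\mathbb{R})$ by a constant depending only on $\rho$, uniformly in $t$.

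\emph{Smoothness and positivity.} For $t>0$, $f$ is a weak solution of the \emph{linear} parabolic equation $\partial_t f=\Delta_A f-\nabla_A\cdot(fV)$, with uniformly elliptic principal part and bounded drift $V$. De Giorgi--Nash--Moser estimates give local Hölder continuity of $f$ on $(0,+\infty)\times SO_3(\mathbb{R})$; then $t\mapsto\mathcal{J}[f(t)]$, hence $V$, is Hölder, Schauder estimates upgrade $f$ to $C^{2,\alpha}$, this in turn regularizes $V$, and the standard parabolic bootstrap yields $f\in C^\infty((0,+\infty)\times SO_3(\mathbb{R}))$; alternatively one invokes Hörmander's hypoellipticity theorem, the diffusion being non-degenerate. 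Nonnegativity of $f$ is preserved by the maximum principle, and since $f_0\not\equiv 0$, the strong maximum principle --- equivalently, strict positivity of the transition density of a non-degenerate diffusion on the connected compact manifold $SO_3(\mathbb{R})$ --- gives $f(t,\cdot)>0$ for every $t>0$.

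\emph{Uniform-in-time Sobolev bounds, and the main obstacle.} Fix $s>0$ and $t_0>0$. By the previous step $t\mapsto\|f(t)\|_{H^s(SO_3(\mathbb{R}))}$ is already finite and continuous on $(0,+\infty)$; what must be proved is uniform boundedness on $[t_0,+\infty)$, and the delicate point is that a direct energy/Gronwall estimate only produces constants growing exponentially in time --- a growth that is genuine for large $\rho$, where the drift term is not dominated by the diffusive dissipation. The remedy is to run parabolic smoothing over the \emph{fixed} window $[t-\tfrac{t_0}{2},t]$ only. Let $S(\sigma)=e^{\sigma\Delta_A}$ be the heat semigroup on $SO_3(\mathbb{R})$, which, being generated by a uniformly elliptic operator on a compact manifold, is analytic and satisfies $\|S(\sigma)g\|_{H^a}\le C\sigma^{-(a-b)/2}\|g\|_{H^b}$ for $a\ge b$, $\|S(\sigma)\nabla_A\cdot v\|_{H^a}\le C\sigma^{-1/2}\|v\|_{H^a}$, and $\|S(\sigma)g\|_{H^a}\le C(a,\sigma)\|g\|_{\mathrm{TV}}$ for finite measures $g$. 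For $t\ge t_0$ and $0<\sigma\le\tfrac{t_0}{2}$, Duhamel's formula gives
\[
  f(t-\tfrac{t_0}{2}+\sigma)=S(\sigma)f(t-\tfrac{t_0}{2})-\int_0^\sigma S(\sigma-\sigma')\,\nabla_A\cdot((fV)(t-\tfrac{t_0}{2}+\sigma'))\,\d\sigma',
\]
and, $V$ being bounded with all its derivatives by $C\rho$, one has $\|(fV)(\cdot)\|_{H^a}\le C\rho\,\|f(\cdot)\|_{H^a}$. A singular Gronwall inequality (Henry's lemma) then bounds $\|f\|_{H^a}$ on a short sub-interval in terms of $\|f\|_{H^{a-1}}$ at its left endpoint, with a constant depending only on $\rho$ and the sub-interval length. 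Dividing $[t-\tfrac{t_0}{2},t]$ into finitely many equal sub-intervals and iterating this gain-of-one-derivative step --- the first fed by $f(t-\tfrac{t_0}{2})$, estimated only through its total mass $\rho$, each later one fed by the $H^a$ bound from the preceding sub-interval (finiteness being already guaranteed by the smoothness above, so that only the uniformity of the constant is at stake) --- we reach $\|f(t)\|_{H^s}\le C(s,t_0,\rho)$, independent of $t$. This is precisely where conservation of mass is essential: it bounds $V$, hence every Gronwall constant, in terms of $\rho$ alone, so that restarting the estimate from a positive time over a window of fixed length $\tfrac{t_0}{2}$ replaces the otherwise exponentially growing factor by a harmless constant. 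The bound for $s\le 0$ follows from the case $s=0$.
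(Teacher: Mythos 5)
Your proof is correct, and the well-posedness and positivity parts follow essentially the same path as the paper (the coupling/Picard argument for existence and uniqueness, the maximum principle for positivity). The genuine divergence is in the uniform-in-time Sobolev estimate, and it is a real alternative worth noting. The paper's sketch (following \cite{frouvelle2012dynamics}) envisages a spectral energy method: write a differential inequality for $\|f(t)\|_{H^s}^2$, use the Poincar\'e inequality so that the Laplacian's dissipation dominates the bounded drift contribution on the mean-zero part of $f$, and note that the remaining low modes are controlled directly by mass conservation; this yields $\tfrac{\d}{\d t}\|f-\rho\|_{H^s}^2 \le -c_1\|f-\rho\|_{H^s}^2 + c_2$ with $c_1,c_2$ depending only on $\rho$, hence a uniform bound in one sweep. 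You instead run Duhamel with the heat semigroup over the fixed window $[t-\tfrac{t_0}{2},t]$, restarting from the total-variation bound $\|f(t-\tfrac{t_0}{2})\|_{\mathrm{TV}}=\rho$ and using Henry's singular Gronwall inequality to propagate the regularity gain, with constants governed only by $\rho$, $t_0$ and $s$. Both routes rest on the same structural fact — conservation of mass and compactness of $SO_3(\mathbb{R})$ bound $\mathcal{J}[f]$, hence the drift and all its $A$-derivatives, uniformly in time — and both yield $t$-independent constants; the energy/Poincar\'e method is lighter and self-contained, while your semigroup argument is more black-box, makes the restarting mechanism very transparent, and transfers readily to settings where a clean spectral splitting is less available. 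Two small points to watch when writing it out in full: the first Duhamel step from the TV bound only lands you in $H^a$ for $a$ sufficiently negative (since $\mathrm{TV}\hookrightarrow H^{-3/2-\epsilon}$ in dimension $3$), so you need a few more iterations than "finitely many" suggests before reaching positive regularity; and for the smoothness claim it would be cleaner to use the same Duhamel bootstrap rather than invoking De Giorgi--Nash--Moser and Schauder separately, though either works.
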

The proof of this proposition can be obtained through simple energy estimates in~$H^s(SO_3(ℝ))$, using Poincaré inequalities for high modes and the fact that the low modes are uniformly bounded in time. Indeed, the nonlinearity in the Fokker--Planck equation~\eqref{FPJf} is only through~$\mathcal{J}[f]$, which is uniformly bounded thanks to its definition~\eqref{def-J} and the fact that~$SO_3(ℝ)$ is compact, together with the fact that the total mass~$ρ$ is preserved. The positivity comes from the maximum principle. We refer to~\cite{frouvelle2012dynamics} to a detailed proof of such results on the unit sphere instead of~$SO_3(ℝ)$, for which all the arguments may be used similarly.

Let us now describe the free energy associated to this Fokker--Planck equation, which may be rewritten
\[∂_tf=∇_A·\big(f\,∇_A(\ln f - A·\mathcal{J}[f])\big).\]
Multiplying by~$\ln f - A·\mathcal{J}[f]$ and integrating over~$SO_3(ℝ)$, the left-hand side of the equality can be seen as a time derivative, and the right-hand side can be integrated by parts, to obtain the following dissipation relation:
\begin{equation}\frac{\d}{\d t}\mathcal{F}[f]+\mathcal{D}[f]=0\label{dissipation},
\end{equation}
where
\begin{align}
  \label{def-F}
  \mathcal{F}[f]=∫_{SO_3(ℝ)}f(A)\ln f(A)\d A -\frac12∥\mathcal{J}[f]∥^2,\\
  \label{def-D}
  \mathcal{D}[f]=∫_{SO_3(ℝ)}f(A)∥∇_A(\ln f - A·\mathcal{J}[f])∥^2\d A.
\end{align}
We can then prove, as in~\cite{frouvelle2012dynamics} that being a stationary state of the Fokker--Planck equation (see Proposition~\ref{prop-equilibria}) is equivalent to be a critical point of~$\mathcal{F}$ under the constraint of mass~$ρ$, and that is also equivalent to be a function with no dissipation~$(\mathcal{D}[f]=0)$.

We then have a decreasing free energy~$\mathcal{F}[f]$, and thanks to a kind of LaSalle’s principle, we obtain that the solution converges to a set of equilibria: 
\begin{proposition}\label{prop-lasalle} Let~$f_0$ be a nonnegative measure on~$SO_3(ℝ)$ with mass~$ρ>0$. We denote by~$\mathcal{F}_∞$ the limit of~$\mathcal{F}[f(t,·)]$ as~$t→+∞$, where~$f$ is the solution to the Fokker--Planck equation~\eqref{FPJf} with initial condition~$f_0$. Then the set of equilibria~$\mathcal{E}_∞$, given by
  \begin{equation*}\mathcal{E}_∞=\{ρ M_{J}\text{ such that }J=ρ\mathcal{J}[M_J]\text{ and }\mathcal{F}[ρM_{J}]=\mathcal{F}_∞\},
  \end{equation*}
  is not empty. Furthermore, the solution~$f$ converges in any Sobolev space~$H^s$ to this set of equilibria in the following sense:
  \[\lim_{t→∞}\inf_{g∈\mathcal{E}_∞}∥f(t,·)-g∥_{H^s}=0.\]
\end{proposition}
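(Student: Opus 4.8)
The plan is to apply a LaSalle-type invariance principle, combining the dissipation identity~\eqref{dissipation} with the uniform-in-time Sobolev bounds of the previous proposition. First I would check that $t\mapsto\mathcal{F}[f(t,\cdot)]$ is bounded from below: the interaction term satisfies $\tfrac12\|\mathcal{J}[f]\|^2\le\tfrac34\rho^2$ since $SO_3(ℝ)$ is compact with $\|A\|^2=\tfrac32$ and $f$ has mass $\rho$, while the entropy $\int f\ln f$ is bounded below because $x\ln x\ge-e^{-1}$ and $SO_3(ℝ)$ has unit volume. By~\eqref{dissipation}, $t\mapsto\mathcal{F}[f(t,\cdot)]$ is then non-increasing and bounded below, hence converges to some $\mathcal{F}_∞$, and integrating~\eqref{dissipation} on $(t_0,∞)$ gives $\int_{t_0}^{∞}\mathcal{D}[f(t,\cdot)]\,\d t=\mathcal{F}[f(t_0,\cdot)]-\mathcal{F}_∞<∞$.

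Fix $t_0>0$. By the previous proposition, $\{f(t,\cdot):t\ge t_0\}$ is bounded in every $H^s(SO_3(ℝ))$, hence relatively compact in $C^∞$ through the compact Sobolev embeddings; moreover the maximum principle (or a Harnack estimate) provides a uniform lower bound $f(t,\cdot)\ge\delta>0$ on $[t_0,∞)$. Let $\Omega=\{g: f(t_n,\cdot)\to g\text{ in }C^∞\text{ for some }t_n\to∞\}$ be the $\omega$-limit set; it is nonempty and compact, mass conservation passes to the limit, and one has the standard property $\inf_{g\in\Omega}\|f(t,\cdot)-g\|_{H^s}\to0$ as $t\to∞$ for every $s$. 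On the set of smooth densities of mass $\rho$ bounded below by $\delta$, both $\mathcal{F}$ and $\mathcal{D}$ are continuous for the $C^∞$ topology, the only delicate point being the factor $\nabla_A\ln f=\nabla_Af/f$ inside $\mathcal{D}$, which is controlled thanks to $\delta$.

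It then remains to prove $\Omega\subseteq\mathcal{E}_∞$. Let $g\in\Omega$ with $f(t_n,\cdot)\to g$. Continuity of $\mathcal{F}$ gives $\mathcal{F}[g]=\lim_n\mathcal{F}[f(t_n,\cdot)]=\mathcal{F}_∞$. Denoting by $S(\tau)$ the solution operator of~\eqref{FPJf}, the well-posedness argument sketched above (Picard contraction in Wasserstein distance, upgraded by the parabolic smoothing and the uniform $H^s$ bounds) shows that $f(t_n+\tau,\cdot)=S(\tau)f(t_n,\cdot)\to S(\tau)g$ in $C^∞$ for each $\tau\ge0$, whence $\mathcal{F}[S(\tau)g]=\lim_n\mathcal{F}[f(t_n+\tau,\cdot)]=\mathcal{F}_∞$. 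Thus $\tau\mapsto\mathcal{F}[S(\tau)g]$ is constant, so by~\eqref{dissipation} $\int_0^\tau\mathcal{D}[S(\sigma)g]\,\d\sigma=0$ for all $\tau$, and by continuity $\mathcal{D}[S(\sigma)g]=0$ for every $\sigma$; in particular $\mathcal{D}[g]=0$. By the equivalence recalled just before the statement, $g=\rho M_J$ with $J=\rho\mathcal{J}[M_J]$, and together with $\mathcal{F}[g]=\mathcal{F}_∞$ this means $g\in\mathcal{E}_∞$. Hence $\mathcal{E}_∞\supseteq\Omega\neq\emptyset$, and since $\Omega\subseteq\mathcal{E}_∞$,
\[\inf_{g\in\mathcal{E}_∞}\|f(t,\cdot)-g\|_{H^s}\le\inf_{g\in\Omega}\|f(t,\cdot)-g\|_{H^s}\longrightarrow0\quad(t\to∞).\]

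The LaSalle skeleton above is routine; the real work lies in the analytic inputs it relies on, which is where I expect the main obstacle. One needs the uniform-in-time $H^s$ bounds and the uniform positive lower bound, so that $\mathcal{F}$ and $\mathcal{D}$ are continuous along the trajectory and the $\omega$-limit points are smooth positive densities; and one needs the continuous dependence of the nonlinear flow $S(\tau)$ on its initial datum, which is precisely what allows propagating the constancy of $\mathcal{F}$ to the entire $\omega$-limit set and not merely along one sequence where $\mathcal{D}[f(t_n,\cdot)]\to0$. These points are obtained exactly as in~\cite{frouvelle2012dynamics}, with the compactness of $SO_3(ℝ)$ playing the same simplifying role as that of the sphere there.
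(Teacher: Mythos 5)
Your proof is correct and follows essentially the same route as the paper, which does not spell out the argument but simply defers to the analogous LaSalle-type proof in~\cite{frouvelle2012dynamics} for the Vicsek model on the sphere. Your reconstruction — free energy bounded below and non-increasing, uniform $H^s$ bounds plus a uniform positive lower bound giving compactness and continuity of $\mathcal{F}$ and $\mathcal{D}$ along the trajectory, continuity of the flow to propagate constancy of $\mathcal{F}$ over the $\omega$-limit set and conclude $\mathcal{D}=0$ there — is exactly the skeleton of that referenced argument.
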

Once more, the proof of this proposition follows exactly the one given in~\cite{frouvelle2012dynamics}. The important point of this proposition is that once the structure of the solutions of the compatibility equation~\eqref{compatJ} is known (which is the aim of the next section), it gives a lot of information on the large time behaviour of the solutions to the Fokker--Planck equation.

Before giving a precise description of these solutions, let us remark that~$J=0$ is always a solution to the compatibility equation, since~$\mathcal{J}[ρ]=0$, therefore the uniform distribution with mass~$ρ$ is a steady-state. We want to expand the free energy~$\mathcal{F}$ around this steady-state. 
We will need the following lemma (Lemma~$3.3$ of~\cite{degond2020phase}): 
\begin{lemma}For all~$J∈M_3(ℝ)$,
  \begin{equation}
    \label{JAA}∫_{SO_3(ℝ)}(J·A) A\, \d A=\frac16J.
  \end{equation}
\end{lemma}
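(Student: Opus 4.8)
The plan is to study the linear map $\Phi\colon M_3(ℝ)→M_3(ℝ)$ defined by $\Phi(J)=∫_{SO_3(ℝ)}(J·A)\,A\,\d A$ and to show that $\Phi=\tfrac16\Id$, which is exactly the claimed identity. Writing this out in coordinates, and using $J·A=\tfrac12\sum_{i,j}J_{ij}A_{ij}$, one has $\Phi(J)_{kl}=\tfrac12\sum_{i,j}J_{ij}\,T_{ijkl}$ where $T_{ijkl}=∫_{SO_3(ℝ)}A_{ij}A_{kl}\,\d A$, so the whole statement reduces to identifying the fourth-order tensor~$T$. The tool for this is the bi-invariance of the (normalized) Haar measure on $SO_3(ℝ)$.

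First I would perform, for a fixed $R∈SO_3(ℝ)$, the change of variables $A↦RA$, which preserves the Haar measure: this shows that for each fixed pair $(j,l)$ the matrix $(T_{ijkl})_{i,k}$ is invariant under $M↦RMR^{\top}$ for every rotation~$R$. The one piece of linear algebra needed is the elementary fact — equivalent to the irreducibility over $ℝ$ of the standard representation of $SO_3(ℝ)$ — that a matrix commuting with all rotations is a scalar multiple of $I_3$: such a matrix and its transpose both commute with all rotations, hence so do its symmetric and antisymmetric parts; the symmetric part has $SO_3(ℝ)$-invariant eigenspaces and therefore a single eigenvalue, while the antisymmetric part equals $[\mathbf{u}]_×$ for some $\mathbf{u}$ with $R\mathbf{u}=\mathbf{u}$ for all $R$, forcing $\mathbf{u}=0$. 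Thus $T_{ijkl}=δ_{ik}\,S_{jl}$ for some $S∈M_3(ℝ)$. Running the same argument with the right translation $A↦AR$ gives likewise $T_{ijkl}=δ_{jl}\,\widetilde S_{ik}$, and comparing the two expressions forces $T_{ijkl}=c\,δ_{ik}δ_{jl}$ for a single scalar~$c$. Substituting back yields $\Phi(J)=\tfrac{c}{2}\,J$.

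It then remains only to compute the constant~$c$, which I would obtain by a double trace: on the one hand $\sum_{i,j}T_{ijij}=c\sum_{i,j}δ_{ii}δ_{jj}=9c$, and on the other hand
\begin{equation*}
  \sum_{i,j}T_{ijij}=∫_{SO_3(ℝ)}\sum_{i,j}A_{ij}^2\,\d A=∫_{SO_3(ℝ)}\mathrm{Tr}(AA^{\top})\,\d A=3,
\end{equation*}
since $AA^{\top}=I_3$ on $SO_3(ℝ)$; hence $c=\tfrac13$ and $\Phi(J)=\tfrac16J$. As an independent check one may evaluate at $J=I_3$: by Rodrigues' formula~\eqref{eqRodrigues} one has $\mathrm{Tr}(R(θ,\mathbf{n}))=1+2\cosθ$, and since the rotation angle of a Haar-distributed matrix has density $\tfrac{1-\cosθ}{π}$ on $[0,π]$ this gives $∫_{SO_3(ℝ)}(\mathrm{Tr}\,A)^2\,\d A=1$, which equals $\sum_{i,j}T_{iijj}=3c$, again yielding $c=\tfrac13$.

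There is no serious obstacle here: the lemma is entirely an exercise in Haar-invariance. The only ingredient with any content is the scalar-commutant fact used twice above — a soft consequence of the irreducibility of the standard $SO_3(ℝ)$-representation — and the precise value $\tfrac16$ is pinned down by the single normalization $\mathrm{Tr}(AA^{\top})=3$, valid on $SO_3(ℝ)$.
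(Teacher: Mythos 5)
Your proof is correct. Note that the paper does not actually prove this lemma itself --- it is quoted as Lemma~3.3 of~\cite{degond2020phase} --- so there is no in-text argument to compare against. The route you take, identifying the fourth-order tensor $T_{ijkl}=\int_{SO_3(\mathbb{R})}A_{ij}A_{kl}\,\d A$ via left- and right-invariance of the Haar measure together with a commutant (Schur-type) argument to reduce it to $c\,\delta_{ik}\delta_{jl}$, and then pinning down $c=\tfrac13$ from $\operatorname{Tr}(AA^{\top})=3$, is sound in every step. In particular, the decomposition of a matrix commuting with all rotations into symmetric and antisymmetric parts is exactly the right elementary way to establish that the commutant of the standard $SO_3(\mathbb{R})$-representation is $\mathbb{R}I_3$; a bare appeal to irreducibility would not suffice over~$\mathbb{R}$ without verifying that the representation is of real type, and your argument handles this. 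The closing sanity check via the Haar angle density $\tfrac{1-\cos\theta}{\pi}$ and $\operatorname{Tr}R(\theta,\mathbf{n})=1+2\cos\theta$ is also correct. For context, the surrounding machinery of the paper (see e.g.\ the footnote in Section~\ref{sec-exponential}) obtains such identities by passing to $\mathbb{S}^3$ via the quaternion correspondence, where $A\mapsto A\cdot J$ becomes a degree-$2$ spherical harmonic; your direct $SO_3(\mathbb{R})$-side calculation is a self-contained and somewhat more elementary alternative, at the modest cost of index bookkeeping.
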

Consequently, if~$f$ is a finite measure, the orthogonal projection of~$f$ on the space of functions of the form~$A↦J·A$ for~$J∈M_3(ℝ)$ is given by~$A↦6\mathcal{J}[f]·A$. Now, let us take a nonnegative measure~$f$ with mass~$ρ$, we write~$J=\mathcal{J}[f]$ and~$g(A)=6\,J·A$. We suppose that~$∥J∥$ is sufficiently small, so that~$ρ+g>0$ on~$SO_3(ℝ)$. We write~$h=f-ρ-g$, so~$h$ is a finite measure with zero average and~$\mathcal{J}[h]=0$. Then we obtain, by convexity of~$x↦x\ln x$ on~$ℝ_+$:
\begin{align}
  \mathcal{F}[f]&⩾∫_{SO_3(ℝ)}[(ρ+g(A))\ln (ρ+g(A))+h(A)(\ln(ρ+g(A))+1)]\,\d A-\tfrac12∥J∥^2\nonumber\\
                &⩾\mathcal{F}[ρ+g]+∫_{SO_3(ℝ)}h(A)\Big(1+\ln ρ+\frac{g(A)}{ρ}\Big)\d A-O(∥g∥_∞^2)∫_{SO_3(ℝ)}|h(A)|\d A.\nonumber\\
  &⩾\mathcal{F}[ρ+g]-O(∥J∥^2)\Big(∫_{SO_3(ℝ)}|f(A)-ρ|\,\d A + O(∥J∥)\Big).\label{Ff2}
\end{align}
Next we compute
\begin{align}
  \mathcal{F}[ρ+g] &=ρ\ln ρ+\frac1{2ρ}∫_{SO_3(ℝ)}(6A·J)^2\d A + O(∥g∥_∞^3) - \tfrac12∥J∥^2\nonumber\\
  &=\mathcal{F}[ρ] + \frac{6-ρ}{2ρ}∥J∥^2 + O(∥J∥^3),\label{Frhog}
\end{align}
thanks to Lemma~\ref{JAA}. We therefore see that the sign of~$6-ρ$ plays a role to study the nature, as a critical point of~$\mathcal{F}$, of the uniform distribution of mass~$ρ$:
\begin{proposition} We set~$ρ_c=6$. \label{prop-stability-uniform}
  \begin{itemize}
  \item If~$ρ<ρ_c$, then the uniform distribution with mass~$ρ$ is a local strict minimizer of the free energy~$\mathcal{F}$ under the constraint of total mass~$ρ$.
    \item If~$ρ>ρ_c$, the uniform distribution with mass~$ρ$ is not a local minimizer of the free energy~$\mathcal{F}$ under the constraint of total mass~$ρ$. 
    \end{itemize}
  \end{proposition}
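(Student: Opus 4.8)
The statement will follow by putting together the two expansions~\eqref{Ff2} and~\eqref{Frhog} that have just been derived; the plan is to organize them cleanly and to take some care about the strictness in the first case. I would first record the exact identity
\[\mathcal{F}[f]-\mathcal{F}[ρ]=∫_{SO_3(ℝ)}f\ln\tfrac{f}{ρ}\,\d A-\tfrac12∥\mathcal{J}[f]∥^2,\]
which holds because~$\mathcal{J}[ρ]=0$ and~$\d A$ is a probability measure; in it the relative entropy~$∫f\ln(f/ρ)\,\d A$ is~$⩾0$ (from~$a\ln(a/b)⩾a-b$) and vanishes only for~$f=ρ$, and~$∥\mathcal{J}[f]∥=∥\mathcal{J}[f-ρ]∥⩽\sqrt{\tfrac32}\,∥f-ρ∥_{L^1}$ since~$∥A∥^2=\tfrac12\mathrm{Tr}(AA^\top)=\tfrac32$ on~$SO_3(ℝ)$. (If~$f$ has a nonzero singular part then~$\mathcal{F}[f]=+∞$ and there is nothing to prove, so we may assume~$f$ has a density.)

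For~$ρ<ρ_c=6$, I would substitute~\eqref{Frhog} into~\eqref{Ff2}, with~$J=\mathcal{J}[f]$ and~$g(A)=6\,J·A$, to obtain
\[\mathcal{F}[f]-\mathcal{F}[ρ]⩾\tfrac{6-ρ}{2ρ}∥J∥^2-O(∥J∥^3)-O(∥J∥^2)\,∥f-ρ∥_{L^1},\]
and then, using~$∥J∥⩽\sqrt{\tfrac32}\,∥f-ρ∥_{L^1}$, restrict to a total-variation ball around~$ρ$ small enough that both error terms are absorbed into the leading one. This leaves~$\mathcal{F}[f]-\mathcal{F}[ρ]⩾\tfrac{6-ρ}{4ρ}∥J∥^2⩾0$, so~$ρ$ is a local minimizer under the mass constraint; for strictness I would split into two cases: if~$\mathcal{J}[f]\neq0$ the bound is strictly positive, whereas if~$\mathcal{J}[f]=0$ and~$f\neq ρ$ then~$\mathcal{F}[f]-\mathcal{F}[ρ]$ reduces to the relative entropy, which is then strictly positive.

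For~$ρ>ρ_c=6$, it is enough to exhibit one admissible direction along which~$\mathcal{F}$ strictly decreases. I would fix any nonzero~$J_0∈M_3(ℝ)$ and put~$f_ε=ρ+6ε\,(J_0·A)$ for small~$ε>0$: this is a positive smooth density (for~$ε$ small, since~$|6ε\,J_0·A|⩽6ε\sqrt{\tfrac32}\,∥J_0∥$), has total mass~$ρ$ because~$∫_{SO_3(ℝ)}A\,\d A=0$, tends to~$ρ$ as~$ε→0$, and satisfies~$\mathcal{J}[f_ε]=εJ_0$ by~\eqref{JAA}. Since~$f_ε$ is exactly of the form~$ρ+g$ appearing in~\eqref{Frhog}, the latter gives
\[\mathcal{F}[f_ε]=\mathcal{F}[ρ]+\tfrac{6-ρ}{2ρ}ε^2∥J_0∥^2+O(ε^3)<\mathcal{F}[ρ]\]
for~$ε$ small enough, because~$6-ρ<0$; hence~$ρ$ is not a local minimizer of~$\mathcal{F}$ under the mass constraint.

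The only genuinely delicate point is the error bookkeeping in the first case: the error term in~\eqref{Ff2} has size~$O(∥J∥^2)∥f-ρ∥_{L^1}$ rather than~$O(∥J∥^3)$, so the argument truly requires the local smallness of~$f-ρ$ and cannot be recast as an estimate in~$∥J∥$ alone. To obtain such an estimate one would instead Taylor-expand the entropy to second order and decompose~$f-ρ$ into its first-harmonic component~$6\,\mathcal{J}[f]·A$ and the~$L^2$-orthogonal remainder, on which the Hessian of the entropy is coercive while the flux term~$-\tfrac12∥\mathcal{J}[f]∥^2$ contributes nothing; this refinement is not needed for the local statement. Apart from that, both cases are short.
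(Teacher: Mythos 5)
Your proof is correct and takes the same route as the paper's, combining \eqref{Ff2} and \eqref{Frhog} and handling strictness at $J=0$ via strict convexity of $x\mapsto x\ln x$ (your relative-entropy form is equivalent to the paper's tangent-line argument since $\int_{SO_3(\mathbb{R})}(f-\rho)\,\d A=0$). Your explicit error bookkeeping and the explicit test function $f_\varepsilon$ in the second bullet are welcome clarifications rather than a different method, especially since the paper's proof contains a typo---its first sentence reads ``When $\rho>6$'' where ``When $\rho<6$'' is clearly meant.
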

\begin{proof}
  When~$ρ>6$, it is clear thanks to~\eqref{Ff2} and~\eqref{Frhog} that if~$∥J∥$ and~$∫_{SO_3(ℝ)}|f-ρ|$ are sufficiently small and~$J≠0$, then~$\mathcal{F}[f]>\mathcal{F}[ρ]$. If~$J=0$ but~$f≠ρ$, then by strict convexity of~$x↦x\ln x$ on~$ℝ_+$, we get
  \[\mathcal{F}[f]=∫_{SO_3(ℝ)}f(A)\ln f(A)\,\d A>∫_{SO_3(ℝ)}(ρ\ln ρ+(f(A)-ρ)\ln ρ)\,\d A=\mathcal{F}[ρ].\]
  The second point follows directly from~\eqref{Frhog}.
\end{proof}

This last proposition gives an insight on the stability of the uniform steady-state (we will indeed see later that this uniform steady-state is isolated). In summary, we have shown that there is a phenomenon of phase transition at the threshold~$ρ=ρ_c$, and we know thanks to Proposition~\ref{prop-lasalle} that there must exist other types of steady-states, at least when~$ρ>ρ_c$. We are now ready to give a precise description of those non-isotropic equilibria.

\section{Link with higher dimensional polymers, solutions to the compatibility equation}
\label{sec-polymers}
This section is the summary of the results we obtained in~\cite{degond2020phase} to solve the compatibility equation~\eqref{compatJ} (in a slightly different context, see Section~\ref{sec-BGK}), therefore we will omit the proofs.

Let us first recall some definitions. We denote by~$ℍ$ the set of quaternions: objects of the form~$q=a+b\mathbf{i}+c\mathbf{j}+d\mathbf{k}$, where~$(a,b,c,d)∈ℝ^4$ and the imaginary quaternions satisfy~$\mathbf{i}^2=\mathbf{j}^2=\mathbf{k}^2=\mathbf{i}\mathbf{j}\mathbf{k}=-1$. For such a quaternion~$q$, we denote by~$q^*=a-b\mathbf{i}-c\mathbf{j}-d\mathbf{k}$ its conjugate. It satisfies~$qq^*=q^*q=a^2+b^2+c^2+d^2=|q|^2$, if we identify the Euclidean space~$ℝ^4$ with~$ℍ$. We denote then by~$ℍ_1$ the set of units quaternions: those for which~$|q|^2=1$.

We say that a quaternion~$q$ of the previous form is purely imaginary if its real part~$a$ is zero. It allows now to identify~$ℝ^3$ with the set of purely imaginary quaternions. We will use boldface letters when using this identification.

The first proposition is a link between~$SO_3(ℝ)$ and~$ℍ_1/\{±1\}$.
\begin{proposition}\label{prop-quaternions-rotations}For any~$q∈ℍ_1$, the linear map~$\mathbf{u}↦q\mathbf{u}q^*$ sends purely imaginary quaternions on purely imaginary quaternions of the same norm. It is therefore identified as a rotation of~$ℝ^3$, and the corresponding rotation matrix is denoted~$Φ(q)$. Conversely for any rotation matrix~$A∈SO_3(ℝ)$, there exists a unit quaternion~$q$ such that~$A=Φ(q)$ (this quaternion is not unique, the only other possibility being~$-q$). The map~$Φ$ can then be seen as a group isomorphism between~$SO_3(ℝ)$ and~$ℍ_1$ (this is actually a local isometry between the manifolds). In practice, the matrix~$R(θ,\mathbf{n})$ given by Rodrigues’ formula~\eqref{eqRodrigues} corresponds to the quaternion~$q=\cos(\frac{θ}2)+\sin(\frac{θ}2)\mathbf{n}$ (remember that vectors in~$ℝ^3$ are seen as purely imaginary quaternions, and remark that if we replace~$θ$ by~$θ+2π$, we get the same rotation matrix, but the opposite quaternion). 
\end{proposition}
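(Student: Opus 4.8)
The plan is to establish the four assertions of the proposition in turn, using only quaternion arithmetic together with what is already recorded in Rodrigues' formula~\eqref{eqRodrigues}. First I would fix~$q∈ℍ_1$ and verify that~$\mathbf{u}↦q\mathbf{u}q^*$ is a well-defined endomorphism of the purely imaginary quaternions: it is~$ℝ$-linear, and for~$\mathbf{u}$ purely imaginary (so that~$\mathbf{u}^*=-\mathbf{u}$) one has~$(q\mathbf{u}q^*)^*=q\mathbf{u}^*q^*=-q\mathbf{u}q^*$, while~$|q\mathbf{u}q^*|=|q|\,|\mathbf{u}|\,|q^*|=|\mathbf{u}|$ since~$|q|=1$; hence the map is orthogonal. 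To see that its determinant is~$+1$, I would use that~$ℍ_1$ is connected (it is the sphere~$𝕊_3$): the function~$q↦\det Φ(q)$ is continuous,~$\{±1\}$-valued, and equals~$1$ at~$q=1$, hence identically~$1$. This produces the map~$Φ:ℍ_1→SO_3(ℝ)$.

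Next I would record the homomorphism property~$Φ(q_1q_2)=Φ(q_1)Φ(q_2)$, which follows immediately from associativity and~$(q_1q_2)^*=q_2^*q_1^*$ via~$(q_1q_2)\mathbf{u}(q_1q_2)^*=q_1(q_2\mathbf{u}q_2^*)q_1^*$, and then compute the kernel: if~$Φ(r)=I_3$ then~$r\mathbf{u}=\mathbf{u}r$ for every purely imaginary~$\mathbf{u}$, hence~$r$ commutes with all of~$ℍ$; since the center of~$ℍ$ is~$ℝ$ and~$|r|=1$, this forces~$r=±1$. Therefore~$Φ(q)=Φ(q')$ exactly when~$q'=±q$, so~$Φ$ descends to an injective homomorphism on~$ℍ_1/\{±1\}$, which will be a group isomorphism onto~$SO_3(ℝ)$ once surjectivity is established.

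For the explicit correspondence I would set~$q(θ)=\cos(\tfrac{θ}2)+\sin(\tfrac{θ}2)\mathbf{n}$ with~$\mathbf{n}$ a unit vector and observe that, because~$\mathbf{n}^2=-|\mathbf{n}|^2=-1$ in~$ℍ$, the angle-addition formulas give~$q(θ_1)q(θ_2)=q(θ_1+θ_2)$. Hence~$θ↦Φ(q(θ))$ is a continuous one-parameter subgroup of~$SO_3(ℝ)$, so equals~$\exp(θX)$ with~$X=\tfrac{\d}{\d θ}Φ(q(θ))|_{θ=0}$. A short computation identifies~$X$ with the map~$\mathbf{u}↦\tfrac12(\mathbf{n}\mathbf{u}-\mathbf{u}\mathbf{n})=\mathbf{n}\times\mathbf{u}$, that is~$X=[\mathbf{n}]_×$ from Definition~\ref{crossmatrix}, so that~$Φ(q(θ))=\exp(θ[\mathbf{n}]_×)=R(θ,\mathbf{n})$ by~\eqref{eqRodrigues}. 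Since by the Rodrigues proposition every~$A∈SO_3(ℝ)$ equals some~$R(θ,\mathbf{n})$, this also gives the surjectivity of~$Φ$ needed above. For the local-isometry assertion I would differentiate~$Φ$ at the identity: the differential carries the purely imaginary tangent vector~$\mathbf{n}$ of~$ℍ_1$ at~$1$ to a fixed multiple of~$[\mathbf{n}]_×$, which by the identity~$[\mathbf{u}]_×·[\mathbf{v}]_×=\mathbf{u}\cdot\mathbf{v}$ for the metric~\eqref{dotSO3} is (up to that scalar) an isometry; transporting this around~$ℍ_1$ using the homomorphism property then gives the statement at every point.

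I do not expect a genuine obstacle here: the only thing to keep track of is the factor~$2$ from the half-angle, namely that~$θ↦q(θ)$ has speed~$\tfrac12$ in~$𝕊_3$ whereas~$θ↦R(θ,\mathbf{n})$ is unit-speed in~$SO_3(ℝ)$ for the metric~\eqref{dotSO3}, so that \emph{local isometry} has to be read up to the usual rescaling of one of the two bi-invariant metrics. The one genuinely computational step is the evaluation~$X\mathbf{u}=\mathbf{n}\times\mathbf{u}$, which rests on the multiplication rule~$\mathbf{n}\mathbf{u}=-\mathbf{n}\cdot\mathbf{u}+\mathbf{n}\times\mathbf{u}$ for purely imaginary quaternions; everything else is routine.
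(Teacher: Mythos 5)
The paper states this proposition as a classical fact and offers no proof of its own, so there is nothing internal to compare against; your task is to check correctness, and your argument is the standard one and is correct. You verify that $\mathbf{u}\mapsto q\mathbf{u}q^*$ is a norm-preserving endomorphism of the imaginary quaternions, use connectedness of~$ℍ_1$ to pin down the determinant, establish the homomorphism property and compute its kernel via the centre of~$ℍ$, and obtain surjectivity together with the explicit formula by showing that~$θ\mapsto Φ(q(θ))$ is the one-parameter group generated by~$[\mathbf{n}]_\times$ and invoking Rodrigues' formula. All of these steps check out. You are also right to flag the normalization issue in the final assertion: with the metric~\eqref{dotSO3} on~$SO_3(ℝ)$ and the round metric on~$ℍ_1\subset ℝ^4$, the differential of~$Φ$ at the identity is~$\mathbf{v}\mapsto 2[\mathbf{v}]_\times$, so~$Φ$ scales lengths by a factor~$2$; the phrase ``local isometry'' in the statement should indeed be read up to this constant rescaling of one of the two bi-invariant metrics, exactly as you note. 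The only cosmetic remark is that the statement's ``group isomorphism between~$SO_3(ℝ)$ and~$ℍ_1$'' really means~$ℍ_1/\{\pm1\}$ (as the sentence preceding the proposition in the paper says), which is precisely what your kernel computation delivers.
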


This allows to represent a rotation matrix by a unit quaternion up to multiplication by~$±1$. This is reminiscent of describing rodlike polymers as unit vectors up to multiplication by~$±1$, but generalized in dimension~$4$. This analogy was the starting point of our work~\cite{degond2018quaternions}, where we used those unit quaternions for the modeling of alignment of rigid bodies. In the present case, we will see that this analogy will actually be very helpful, by transforming the compatibility equation~\eqref{compatJ} into another one which has already been solved in~\cite{wang2008unified}, in the context of suspensions of diluted polymers.

We denote by~$\mathcal{S}_4^0(ℝ)$ the space of symmetric and trace-free matrices of dimension~$4$, which are called~$Q$-tensors. To a unit quaternion~$q$, we can associate the~$Q$-tensor given by~$q⊗q-\frac14I_4$. Remark that two unit quaternions~$q$ and~$\widetilde q$ are associated to the same~$Q$-tensor if and only if~$q=±\widetilde q$ (this is a unit vector in the eigenspace of this~$Q$-tensor associated to the eigenvalue~$\frac34$, which is one-dimensional). So we have another way to represent unit quaternions up to multiplication by~$±1$ in this space. The important fact to notice is that those two embeddings are actually the same, up to a linear isomorphism between the spaces~$M_3(ℝ)$ and~$\mathcal{S}_4^0(ℝ)$, which has nice properties.

\begin{proposition}\label{isomorphism-Qtensors} There exists a linear isomorphism~$ϕ$ between the spaces~$M_3(ℝ)$ and~$\mathcal{S}_4^0(ℝ)$ (both of dimension~$9$) with the following properties:
  \begin{gather}
    ∀q∈ℍ_1,\quad ϕ(Φ(q))=q⊗q-\tfrac14I_4,\label{PhiQ}\\
    ∀J∈M_3(ℝ),∀q∈ℍ_1, \quad \frac12J·Φ(q)=q·ϕ(J)q,\label{defphi}
  \end{gather}
  where the map~$Φ$ is given by Proposition~\ref{prop-quaternions-rotations}. The dot product in the left-hand side of~\eqref{defphi} is the metric in the space~$M_3(ℝ)$ given in~\eqref{dotSO3}, while the one in the right-hand side is the canonical scalar product of~$ℝ^4$. Furthermore, the isomorphism~$ϕ$ preserves the diagonal structure:~$J∈M_3(ℝ)$ is diagonal if and only if~$ϕ(J)$ is diagonal in~$\mathcal{S}_4^0(ℝ)$.
\end{proposition}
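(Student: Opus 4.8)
The plan is to \emph{define} $ϕ$ directly through the quadratic-form identity~\eqref{defphi} and then deduce the other two properties. Fix $J\in M_3(ℝ)$. Since the map $\mathbf u\mapsto q\mathbf uq^{*}$ is linear in $\mathbf u$ and homogeneous of degree $2$ in $q$, every entry of $Φ(q)$ is a homogeneous quadratic polynomial in the four coordinates of $q$, so $\tfrac12 J·Φ(q)=\tfrac14\mathrm{Tr}(JΦ(q)^{\top})$ is such a polynomial as well; hence there is a symmetric $4\times4$ matrix $S(J)$ with $q·S(J)q=\tfrac12 J·Φ(q)$ for all $q\in ℝ^4$, and I set $ϕ(J):=S(J)-\tfrac14\mathrm{Tr}(S(J))I_4$. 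Then $q·ϕ(J)q=\tfrac12 J·Φ(q)$ on $ℍ_1$, which is~\eqref{defphi}; in fact $\mathrm{Tr}(S(J))=0$ already, since averaging $q·S(J)q$ over $ℍ_1$ gives $\tfrac14\mathrm{Tr}(S(J))$ while averaging $\tfrac12 J·Φ(q)$ gives $\tfrac12 J·\!\int_{SO_3(ℝ)}\!A\,\d A=0$. The assignment $J\mapsto ϕ(J)$ is visibly linear and lands in $\mathcal{S}_4^0(ℝ)$, and it is the \emph{unique} such map with property~\eqref{defphi}: a quadratic form vanishing on $ℍ_1$ vanishes on all of $ℝ^4$ by homogeneity, so a given function on $ℍ_1$ is represented by at most one symmetric matrix as a quadratic form.

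For bijectivity, since $\dim M_3(ℝ)=\dim\mathcal{S}_4^0(ℝ)=9$ it suffices to check injectivity: if $ϕ(J)=0$ then $J·Φ(q)=0$ for every $q\in ℍ_1$, hence $J·A=0$ for all $A\in SO_3(ℝ)$ because $Φ$ is onto $SO_3(ℝ)$ (Proposition~\ref{prop-quaternions-rotations}); integrating against $A$ and using the identity~\eqref{JAA} gives $\tfrac16 J=0$, so $J=0$. Property~\eqref{PhiQ} then follows from the uniqueness just proved: for fixed $q_0\in ℍ_1$ one only has to verify that $q·\bigl(q_0⊗q_0-\tfrac14 I_4\bigr)q=(q_0·q)^2-\tfrac14$ equals $\tfrac12 Φ(q_0)·Φ(q)$ for all $q\in ℍ_1$, i.e. the classical trace identity
\[\tfrac12\,Φ(q_0)·Φ(q)=(q_0·q)^2-\tfrac14.\]
To get it, use that $Φ$ is a group homomorphism and $Φ(q)^{\top}=Φ(q)^{-1}=Φ(q^{*})$ for a unit quaternion, so $Φ(q_0)Φ(q)^{\top}=Φ(q_0q^{*})$; writing $q_0q^{*}=\pm(\cos\tfrac{θ}2+\sin\tfrac{θ}2\,\mathbf n)$ with $θ\in[0,π]$ as in Proposition~\ref{prop-quaternions-rotations} (so $\cos\tfrac{θ}2=|q_0·q|$, the scalar part of $q_0q^{*}$ being $q_0·q$), Rodrigues' formula~\eqref{eqRodrigues} gives $\mathrm{Tr}\,Φ(q_0q^{*})=\mathrm{Tr}\,R(θ,\mathbf n)=1+2\cosθ=4(q_0·q)^2-1$, whence $\tfrac12 Φ(q_0)·Φ(q)=\tfrac14\mathrm{Tr}(Φ(q_0)Φ(q)^{\top})=(q_0·q)^2-\tfrac14$.

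For the diagonal structure, I would compute $ϕ$ on a diagonal $J=\mathrm{diag}(x,y,z)$ using the explicit diagonal entries of $Φ(q)$, namely $a^2+b^2-c^2-d^2$, $a^2-b^2+c^2-d^2$, $a^2-b^2-c^2+d^2$ in coordinates $q=(a,b,c,d)$: one gets $ϕ(\mathrm{diag}(x,y,z))=\tfrac14\,\mathrm{diag}(x+y+z,\;x-y-z,\;-x+y-z,\;-x-y+z)$, which is diagonal. Thus $ϕ$ sends the $3$-dimensional space of diagonal matrices of $M_3(ℝ)$ into the $3$-dimensional space of diagonal trace-free matrices of $\mathcal{S}_4^0(ℝ)$, and since $ϕ$ is injective this restriction is a bijection between the two; consequently $ϕ(J)$ is diagonal if and only if $J$ is.

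All the computations here are elementary; the only points demanding a little attention are the well-definedness and uniqueness argument of the first paragraph — so that~\eqref{defphi} genuinely defines a \emph{linear} map and not merely a pointwise relation — and the bookkeeping of normalization constants in the trace identity. There is no real obstacle: the fact that $Φ$ is quadratic in $q$ is precisely what makes $ϕ$ linear, the matching dimension $9=9$ upgrades injectivity to bijectivity, and everything rests on the already-available surjectivity of $Φ$ and the identity~\eqref{JAA}.
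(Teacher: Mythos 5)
Your proof is correct and shares the paper's starting point — defining $\phi$ through the quadratic-form identity~\eqref{defphi} — but you derive the remaining properties by different means. The paper defers to an explicit matrix formula for $\phi(J)$ (computed in the appendix of~\cite{degond2020phase}) and reads off trace-freeness, bijectivity and~\eqref{PhiQ} from it; you instead argue these conceptually: trace-freeness of $S(J)$ by averaging over $\mathbb{H}_1$ and over $SO_3(\mathbb{R})$ (using $\int_{SO_3}A\,\mathrm dA=0$ and $\int_{\mathbb{H}_1}q_iq_j\,\mathrm dq=\tfrac14\delta_{ij}$), injectivity via Lemma~\ref{JAA} combined with the $9=9$ dimension count, property~\eqref{PhiQ} via the trace identity $\tfrac14\mathrm{Tr}\,\Phi(q_0q^*)=(q_0\cdot q)^2-\tfrac14$ plus the uniqueness of a symmetric matrix representing a homogeneous degree-two function, and the diagonal structure from the explicit diagonal of the quaternion-to-rotation map. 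The one mild stylistic quibble: defining $\phi(J):=S(J)-\tfrac14\mathrm{Tr}(S(J))I_4$ and then immediately observing $\mathrm{Tr}(S(J))=0$ is slightly circular in presentation — the trace subtraction would spoil~\eqref{defphi} on $\mathbb{H}_1$ unless the subtracted term vanishes, so it is cleaner to prove $\mathrm{Tr}(S(J))=0$ first and simply set $\phi=S$ — but the mathematics is sound. Your route avoids reliance on the external appendix entirely and gives a more self-contained, coordinate-light argument, at the cost of requiring Lemma~\ref{JAA} (which the paper has anyway) and the quaternionic trace identity.
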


The proof of this proposition is done in~\cite{degond2020phase}. The expression~\eqref{defphi} is actually the definition of~$ϕ$: the left-hand side is a quadratic form in~$q$ (seen as an element of~$ℝ^4$), defined for any unit quaternion, which defines a symmetric bilinear form on all quaternions, the matrix of which is~$ϕ(J)$. The expression of~$ϕ(J)$ is given in the appendix of~\cite{degond2020phase}, which gives the fact that it is bijective and with values in trace-free matrices, and the provides the property~\eqref{PhiQ}. With this isomorphism, we can rewrite the compatibility equation in the framework of~$Q$-tensors. For a finite measure~$f$ on~$ℍ_1$, we define its averaged~$Q$-tensor by
\begin{equation*}\label{defQf}
  \mathcal{Q}[f]=∫_{ℍ_1}f(q) (q⊗q-\tfrac14I_4)\d q.
\end{equation*}
Therefore, thanks to the definition~\eqref{def-J} of~$\mathcal{J}$ and the fact that~$Φ$ is a local isometry, we obtain, for a finite measure~$f$ on~$SO_3(ℝ)$
\[ϕ(\mathcal{J}[f])=∫_{SO_3(ℝ)}ϕ(A)f(A)\d A=∫_{ℍ_1}ϕ(Φ(q))f(Φ(q))\d q=\mathcal{Q}[f∘Φ].\]
Finally, we also define the generalized von Mises associated to~$Q∈\mathcal{S}_4^0(ℝ)$ by
\begin{equation*}\label{defVMQ}
  M_Q(q)=\frac1{\mathcal{Z}(Q)}\exp(q·Qq)\text{, where }\mathcal{Z}(Q)=∫_{ℍ_1}\exp(q·Qq)\d q,
\end{equation*}
where we use the same notation as in~\eqref{defVM} for the generalized von Mises on~$SO_3(ℝ)$, but it will always be clear following the context which definition is concerned. Using the property~\eqref{defphi}, it is then clear that~$M_J(Φ(q))=M_{2ϕ(J)}(q)$. Therefore, the compatibility equation~\eqref{compatJ} becomes, writing~$Q=2ϕ(J)$:

\[Q=2ϕ(J)=2ρ\,ϕ(\mathcal{J}[M_J])=2ρ\,\mathcal{Q}[M_Q].\]

It happens that this equation is exactly the compatibility equation that we obtain when we try to obtain the steady states of the following Fokker--Planck equation, for a probability measure~$μ$ on~$ℍ_1$:
\[∂_tμ=-2ρ\, ∇_q·(μ ∇_q(q·\mathcal{Q}[μ]q))-Δ_qμ.\]
This corresponds to the Smoluchowski (or Doi--Onsager) equation for suspensions of dilute rodlike polymers with Maier--Saupe potential of strength~$2ρ$, and is nothing else than our Fokker--Planck equation~\eqref{FPJ}, up to a change of variable thanks to the map~$Φ$. It happens that this compatibility equation has been studied a lot in dimension~$3$ (instead of~$4$ here), with the independent works~\cite{constantin2004asymptotic,fatkullin2005critical,liu2005axial}. And in the work~\cite{wang2008unified}, a unified approach has been proposed, which allows to treat the case of higher dimensional space. The main result is that a solution~$Q∈\mathcal{S}_n^0(ℝ)$ of the compatibility equation~$Q=α\,\mathcal{Q}[M_Q]$ can have at most two different eigenvalues. In dimension~$4$, it means that if~$Q$ is different from zero, there are only two cases: either one eigenvalue is simple and the other one is triple, or both are double. In the first case, if we take~$q$ a unit quaternion in the eigenspace of dimension one, we get that~$Q$ is proportional to~$q⊗q-\frac14I_4$, which means that~$J=ϕ^{-1}(Q)$ is proportional to the rotation matrix~$Φ(q)$. And indeed it is possible to see that if~$A_0$ is a rotation matrix and~$α∈ℝ$, then~$\mathcal{J}[M_{αA_0}]$ is proportional to~$A_0$, with a coefficient~$c_1(α)$ (that can be expressed using an appropriate volume form on~$SO_3(ℝ)$ and will be given later on). Therefore the compatibility equation~\eqref{compatJ} becomes the one-dimensional equation~$α=ρc_1(α)$. For the second case, it is a little bit more subtle, but it still leads to a one-dimensional equation of the form~$α=ρc_2(α)$. The results are summarized in the following proposition (corresponding to Theorem~$5$ of~\cite{degond2020phase}):

\begin{proposition}\label{prop-solutions-compat}
  The solutions to the compatibility equation~\eqref{compatJ} are:
  \begin{itemize}
  \item The matrix~$J=0$,
  \item the matrices of the form~$J=αA_0$ with~$A_0∈SO_3(ℝ)$ and where~$α∈ℝ\setminus\{0\}$ satisfies the scalar compatibility equation
    \begin{equation}α=ρc_1(α),\label{compat1}
    \end{equation}
  \item the matrices of the form~$J=α\sqrt3\,\mathbf{a}_0⊗\mathbf{b}_0$ where~$\mathbf{a}_0$ and~$\mathbf{b}_0$ are two unit vectors of~$ℝ^3$ and~$α>0$ satisfies the scalar compatibility equation
    \begin{equation}α=ρc_2(α),\label{compat2}
    \end{equation}
       
  \end{itemize}
  with the functions~$c_1$ and~$c_2$ given by
  \begin{align*}
    c_1(α)&=\frac{∫_0^π\frac13(2\cosθ+1)\sin^2(\frac{θ}2)\exp(α\cosθ)\d θ}{∫_0^π\sin^2(\frac{θ}2)\exp(α\cosθ)\d θ},\\
    c_2(α)&=\frac1{\sqrt3}\frac{∫_0^π\cosφ\sinφ\exp(\frac{\sqrt3}2α\cosφ)\d φ}{∫_0^π\sinφ\exp(\frac{\sqrt3}2α\cosφ)\d φ}.
  \end{align*}

\end{proposition}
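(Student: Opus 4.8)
\emph{Proof sketch.} The plan is to transport the compatibility equation \eqref{compatJ} to the $Q$-tensor side via the isomorphism $\phi$, invoke the eigenvalue classification of \cite{wang2008unified}, and then read off the two geometric families by inverting $\phi$. From \eqref{defphi} one has $M_J\circ\Phi=M_{2\phi(J)}$ on $\mathbb H_1$, and since $\Phi$ is a local isometry, $\phi(\mathcal J[f])=\mathcal Q[f\circ\Phi]$ for every finite measure $f$ on $SO_3(\mathbb R)$ (as already noted in the text). Setting $Q=2\phi(J)$, the matrix $J$ solves \eqref{compatJ} if and only if $Q=2\rho\,\mathcal Q[M_Q]$. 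The crucial input is the main result of \cite{wang2008unified}: every solution $Q\in\mathcal S_4^0(\mathbb R)$ of a compatibility equation of the form $Q=\alpha\,\mathcal Q[M_Q]$ has at most two distinct eigenvalues. Since $Q$ is trace-free in dimension $4$, a nonzero such $Q$ must therefore have either one simple and one triple eigenvalue, or two double eigenvalues; and $Q=0$ corresponds to $J=0$, the first item.

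In the first nonzero case, let $q_0\in\mathbb H_1$ span the one-dimensional eigenspace. Then $Q$ is a nonzero multiple of $q_0\otimes q_0-\tfrac14 I_4=\phi(\Phi(q_0))$ by \eqref{PhiQ}, so by injectivity of $\phi$ we get $J=\alpha A_0$ with $A_0=\Phi(q_0)\in SO_3(\mathbb R)$ and $\alpha\in\mathbb R\setminus\{0\}$. To evaluate $\mathcal J[M_{\alpha A_0}]$ we use the quaternionic picture: $M_{\alpha A_0}\circ\Phi=M_{Q_0}$ with $Q_0=2\alpha\,(q_0\otimes q_0-\tfrac14 I_4)$, and $M_{Q_0}(q)\propto\exp(2\alpha(q\cdot q_0)^2)$ is invariant under every orthogonal transformation of $\mathbb R^4$ fixing the line $\mathbb R q_0$. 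Hence $\mathcal Q[M_{Q_0}]$ commutes with this large symmetry group and is forced to be a scalar multiple of $q_0\otimes q_0-\tfrac14 I_4$; transporting this identity back through $\phi$ gives $\mathcal J[M_{\alpha A_0}]=c_1(\alpha)\,A_0$ for some constant $c_1(\alpha)$. Parametrising $SO_3(\mathbb R)$ by the rotation angle $\theta\in[0,\pi]$ --- which is twice the geodesic angle between $q$ and $q_0$ on $\mathbb H_1$, where the volume element is proportional to $\sin^2(\theta/2)\,\d\theta$ --- and using $A\cdot A_0=\tfrac12(1+2\cos\theta)$ together with $\|A_0\|^2=\tfrac32$, a one-dimensional integral yields the stated formula for $c_1$. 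The compatibility equation \eqref{compatJ} then reduces to $\alpha A_0=\rho\,c_1(\alpha)A_0$, i.e. to the scalar equation \eqref{compat1}, $\alpha=\rho\,c_1(\alpha)$; conversely any such pair gives a steady state.

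For a nonzero $Q$ with two double eigenvalues, write $Q=\mu\,(2P_V-I_4)$ for a two-plane $V\subset\mathbb R^4$ and $\mu\neq0$. One checks that this is $2\phi$ of a rank-one matrix: for unit vectors $\mathbf a_0,\mathbf b_0\in\mathbb S_2$, \eqref{defphi} and the identity $\Phi(q)^{\top}=\Phi(q^*)$ (from Proposition~\ref{prop-quaternions-rotations}) give $q\cdot\phi(\mathbf a_0\otimes\mathbf b_0)q=\tfrac14\langle\mathbf a_0,\Phi(q)\mathbf b_0\rangle$, a quadratic form on $\mathbb R^4$ attaining its maximum $\tfrac14$ exactly on the great circle $\{q\in\mathbb H_1:\Phi(q)\mathbf b_0=\mathbf a_0\}$ and its minimum $-\tfrac14$ exactly on $\{q:\Phi(q)\mathbf b_0=-\mathbf a_0\}$; these are the unit spheres of two orthogonal two-planes, so $\phi(\mathbf a_0\otimes\mathbf b_0)$ has eigenvalues $\tfrac14,\tfrac14,-\tfrac14,-\tfrac14$, and a dimension count together with the injectivity of $\phi$ shows that every $Q$ of the present type is realised this way (with $\mu=\tfrac{\sqrt3}{4}\alpha$; flipping $\mathbf a_0$ lets us take $\alpha>0$). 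Thus $J=\alpha\sqrt3\,\mathbf a_0\otimes\mathbf b_0$. Since $J\cdot A=\tfrac{\sqrt3}{2}\alpha\,\mathbf a_0\cdot(A\mathbf b_0)$, the law $M_J$ depends on $A$ only through $A\mathbf b_0$; pushing it forward along the fibration $A\mapsto A\mathbf b_0\in\mathbb S_2$ (which sends Haar measure to the uniform measure on the sphere) and using that the average of $A$ over a fibre $\{A:A\mathbf b_0=\mathbf v\}$ equals $\mathbf v\mathbf b_0^{\top}$ (rotations about $\mathbf b_0$ average to $\mathbf b_0\mathbf b_0^{\top}$), we obtain $\mathcal J[M_J]=\big(\mathcal Z^{-1}\int_{\mathbb S_2}\mathbf v\,\exp(\tfrac{\sqrt3}{2}\alpha\,\mathbf a_0\cdot\mathbf v)\,\d\mathbf v\big)\mathbf b_0^{\top}=\sqrt3\,c_2(\alpha)\,\mathbf a_0\otimes\mathbf b_0$ by the axial symmetry of the von Mises distribution on $\mathbb S_2$, with $c_2$ exactly the stated integral ratio. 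Inserting this into \eqref{compatJ} reduces it to \eqref{compat2}, $\alpha=\rho\,c_2(\alpha)$; the converse is again immediate.

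The only substantive ingredient is the eigenvalue dichotomy of \cite{wang2008unified}: without it one cannot exclude solutions $Q$ with three distinct eigenvalues, and this is where the variational analysis of the quartic Maier--Saupe free energy in dimension~$4$ is genuinely used. The remaining difficulty is careful bookkeeping with $\phi$ --- matching each non-degenerate eigenvalue pattern of $Q$ to the right geometric family of $J$ (especially the rank-one identification in the two-double-eigenvalue case) and tracking normalisation constants so that the reduced scalar equations come out precisely as \eqref{compat1} and \eqref{compat2}. All the structural facts this requires, about $SO_3(\mathbb R)$, unit quaternions and the map $\phi$, are already available through Propositions~\ref{prop-quaternions-rotations} and \ref{isomorphism-Qtensors}, so no new kinetic-level estimate is needed; the finer properties of $c_1$ and $c_2$ near $0$, which govern the thresholds, are left to the subsequent analysis.
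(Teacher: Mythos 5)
Your argument is correct and follows precisely the route the paper itself sketches before the proposition (and which \cite{degond2020phase} carries out in detail, the proof being deliberately omitted here): transport the compatibility equation to $Q$-tensors via $\phi$, invoke the eigenvalue dichotomy of \cite{wang2008unified} in dimension~$4$, then invert $\phi$ on each of the two nonzero eigenvalue patterns and reduce to the scalar equations via symmetry, with a clean treatment of the two-double-eigenvalue case as the image of rank-one matrices. The only slip is the parenthetical $\mu=\tfrac{\sqrt3}{4}\alpha$, which should read $\mu=\tfrac{\sqrt3}{2}\alpha$ since $Q=2\phi(J)$ doubles the eigenvalues $\pm\tfrac14$ of $\phi(\mathbf a_0\otimes\mathbf b_0)$ while $J=\alpha\sqrt3\,\mathbf a_0\otimes\mathbf b_0$, but this does not propagate: the formulas for $c_1$ and $c_2$ are rederived independently and are correct.
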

Compared to the convention taken in~\cite{degond2020phase}, we chose to add the constant~$\sqrt3$ in the last type of solutions (changing accordingly the expression of~$c_2(α)$). The reason is that if~$J$ is a solution to the compatibility equation~\eqref{compatJ}, where~$α$ satisfies~\eqref{compat1} or \eqref{compat2}, then~$∥J∥^2=\frac32α^2$. The order parameter~$c$ associated to the steady state~$ρM_J$ by the formula~\eqref{def-c} is then equal to~$\frac{|α|}{ρ}$ which is~$|c_1(α)|$ or~$|c_2(α)|$. These functions~$c_1$ and~$c_2$ then provide the values of the order parameter of the considered steady-state. The study of these functions (and more precisely the behaviour of~$\frac{α}{c_1(α)}$ and~$\frac{α}{c_2(α)}$) is the key to provide a complete description of the possible steady-states. Once more, the following proposition is taken from~\cite{degond2020phase}.

\begin{proposition} The functions~$c_1$ and~$c_2$ are both strictly increasing on~$ℝ$ having value~$0$ at~$0$. Therefore~$0$ is always a solution to the scalar compatibility equations~\eqref{compat1} and~\eqref{compat2}. If we set~$ρ_c=6$, then when~$α→0$, the functions~$ρ_1:α↦\frac{α}{c_1(α)}$ and~$ρ_2:α↦\frac{α}{c_2(α)}$ both have a limit equal to~$ρ_c$. Furthermore:
  \begin{itemize}
  \item There exists~$α^*>0$ such that~$ρ_1$ is decreasing on~$(-∞,α^*]$ and increasing on~$[α^*,+∞)$, converging to~$+∞$ at~$±∞$. We set~$ρ^*=ρ_1(α^*)$ (which is less than~$ρ_c$). For all~$ρ⩾ρ^*$, we define~$α_1^{↑}(ρ)$ (resp.~$α_1^{↓}(ρ)$) to be the unique value of~$α⩾α^*$ (resp~$α⩽α^*$) such that~$ρ_1(α)=ρ$. Finally, we define~$\widetilde{c}_1^{↑}(ρ)=c_1(α_1^{↑}(ρ))$ and~$\widetilde{c}_1^{↓}(ρ)=c_1(α_1^{↓}(ρ))$. Setting~$c^*=c_1(α^*)$, the function~$\widetilde{c}_1^{\uparrow}$ (resp.~$\widetilde{c}_1^{\downarrow}$) is increasing (resp. decreasing) on~$[ρ^*,+∞)$, with value~$c^*$ at~$ρ^*$, and converging to~$1$ (resp.~$-\frac{1}3$) at~$+∞$.

    Numerically, we obtain~$α^*≈1.9395$,~$ρ^*≈4.5832$, and~$c^*≈0.4232$.
    \item The function~$ρ_2$ is (even and) increasing on~$[0,+∞)$, converging to~$+∞$ at~$+∞$. For all~$ρ⩾ρ_c$, we define~$α_2(ρ)$ to be the unique value of~$α⩾α^*$ such that~$ρ_2(α)=ρ$. Finally, we define~$\widetilde{c}_2(ρ)=c_2(α_2(ρ))$. The function~$\widetilde{c}_2$ is increasing on~$[ρ_c,+∞)$, with value~$0$ at~$ρ_c$ and converging to~$\frac1{\sqrt3}$ at~$+∞$.  
    \end{itemize}
  \end{proposition}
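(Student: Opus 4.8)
The plan is to reduce the study of~$c_1$ and~$c_2$ to that of two one-parameter families of Gibbs measures on~$[-1,1]$. In~$c_2(α)$ the substitution~$u=\cosφ$ gives, with~$β=\tfrac{\sqrt3}{2}α$,
\[c_2(α)=\frac1{\sqrt3}\,\frac{∫_{-1}^1 u\,e^{βu}\,\d u}{∫_{-1}^1 e^{βu}\,\d u}=\frac1{\sqrt3}\Bigl(\cothβ-\frac1β\Bigr),\]
so~$c_2$ is a rescaled Langevin function; in~$c_1(α)$ the substitution~$u=\cosθ$ (using~$\sin^2(θ/2)=\tfrac{1-u}{2}$), or equivalently a direct computation with modified Bessel functions, gives
\[c_1(α)=\tfrac13\bigl(1+2\,m_1(α)\bigr),\qquad m_1(α):=\frac{∫_{-1}^1 u\,w(u)\,e^{αu}\,\d u}{∫_{-1}^1 w(u)\,e^{αu}\,\d u},\qquad w(u)=\sqrt{\tfrac{1-u}{1+u}},\]
and in closed form~$c_1(α)=-\tfrac13+\dfrac{2\,I_1(α)}{3α\,(I_0(α)-I_1(α))}$. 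In both cases~$c_i$ is real-analytic in~$α$ and is an increasing affine function of the mean of~$u$ under a one-parameter exponential family (log-density linear in~$α$), so~$c_i'(α)$ is a positive multiple of the variance of~$u$ under the corresponding probability measure; hence~$c_1,c_2$ are strictly increasing on~$ℝ$. Evaluating at~$α=0$ (elementary moments of~$w(u)\,\d u$, resp.~$\d u$) gives~$c_i(0)=0$ and~$c_i'(0)=\tfrac16$, so~$c_i$ has a simple zero at~$0$ and~$ρ_i(α)=\bigl(c_i(α)/α\bigr)^{-1}$ extends real-analytically across~$0$ with~$ρ_i(0)=1/c_i'(0)=6=ρ_c$. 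Finally, Laplace asymptotics (the measures concentrate at~$u=\pm1$, with~$w$ blowing up integrably at~$u=-1$) give~$c_2(α)\to\pm\tfrac1{\sqrt3}$ as~$α\to\pm∞$, and~$c_1(α)\to1$ (resp.~$-\tfrac13$) as~$α\to+∞$ (resp.~$α\to-∞$), with the refinements~$c_1(α)=1-\tfrac1α+o(\tfrac1α)$ and~$c_1(α)=-\tfrac13-\tfrac1{3α}+o(\tfrac1α)$ at~$\pm∞$.

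For the~$c_2$ branch, write~$ρ_2(α)=2β/\bigl(\cothβ-1/β\bigr)$ with~$β=\tfrac{\sqrt3}2α$ and use that~$β\mapsto(\cothβ-1/β)/β$ is strictly decreasing on~$(0,∞)$ --- a classical property of the Langevin function, which reduces to~$g(x):=4\cosh x-x\sinh x-x^2-4<0$ for~$x>0$ (true since~$g$ and its first three derivatives vanish at~$0$ while~$g^{(4)}(x)=-x\sinh x<0$). Hence~$ρ_2$ is strictly increasing on~$[0,∞)$ from~$ρ_2(0)=6=ρ_c$ to~$+∞$, and even since~$c_2$ is odd. Its inverse~$α_2$ on~$[ρ_c,∞)$ is then well-defined and increasing, so~$\widetilde{c}_2=c_2\circ α_2$ is increasing, with~$\widetilde{c}_2(ρ_c)=c_2(0)=0$ and~$\widetilde{c}_2(ρ)\to c_2(+∞)=\tfrac1{\sqrt3}$ as~$ρ\to+∞$.

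For the~$c_1$ branch --- the real work --- note that~$ρ_1'(α)=N(α)/c_1(α)^2$ with~$N(α):=c_1(α)-α\,c_1'(α)$, that~$N'(α)=-α\,c_1''(α)$, and that~$N(0)=0$. Since~$c_1''(α)=\tfrac23\frac{\d}{\d α}\mathrm{Var}_{μ_α}(u)$ is~$\tfrac23$ times the third cumulant~$κ_3(μ_α)$, the whole shape of~$ρ_1$ is governed by the sign of~$κ_3(μ_α)$. One computes~$κ_3(μ_0)=\tfrac18>0$, hence~$c_1''(0)=\tfrac1{12}>0$ and~$ρ_1(α)=6\bigl(1-\tfrac14α+O(α^2)\bigr)$ near~$0$; the asymptotics above give~$κ_3(μ_α)<0$ for~$α$ large positive and~$κ_3(μ_α)>0$ for~$α$ large negative. \emph{Granting that~$κ_3(μ_α)$, hence~$c_1''$, changes sign exactly once on~$ℝ$}, from~$+$ to~$-$ at some~$α_0>0$, one gets that~$N'=-α\,c_1''$ is positive on~$(-∞,0)$ and on~$(α_0,∞)$, negative on~$(0,α_0)$; together with~$N(0)=0$, $N(-∞)=-\tfrac13$, $N(+∞)=1$ this forces~$N<0$ on~$(-∞,α^*)$ and~$N>0$ on~$(α^*,+∞)$ for a unique~$α^*>α_0>0$. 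Hence~$ρ_1$ is strictly decreasing on~$(-∞,α^*]$ and strictly increasing on~$[α^*,+∞)$; it tends to~$+∞$ at~$\pm∞$ (as~$c_1$ stays in~$(-\tfrac13,1)$ while~$|α|\to∞$); and~$ρ^*:=ρ_1(α^*)<ρ_1(0)=6=ρ_c$ since~$ρ_1$ is strictly decreasing through~$0$. The two monotone inverses of~$ρ_1$ are~$α_1^{↓}:[ρ^*,∞)\to(-∞,α^*]$ (decreasing) and~$α_1^{↑}:[ρ^*,∞)\to[α^*,∞)$ (increasing), so~$\widetilde{c}_1^{↓}=c_1\circ α_1^{↓}$ is decreasing with limit~$c_1(-∞)=-\tfrac13$, $\widetilde{c}_1^{↑}=c_1\circ α_1^{↑}$ is increasing with limit~$c_1(+∞)=1$, and both equal~$c^*:=c_1(α^*)$ at~$ρ=ρ^*$; the numerical values of~$α^*,ρ^*,c^*$ are obtained by solving~$α^*c_1'(α^*)=c_1(α^*)$ numerically.

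The one genuinely delicate point --- and the expected main obstacle --- is the bracketed claim that~$κ_3(μ_α)$ for~$μ_α\propto\sqrt{\tfrac{1-u}{1+u}}\,e^{αu}$ has exactly one sign change (equivalently, that~$c_1$ has a single inflection point). I would attack it either by expressing the numerator and denominator of~$c_1$, hence of~$c_1''$, through modified Bessel functions and reading the sign structure off from known monotonicity and convexity properties of the ratio~$I_1/I_0$, or --- as is effectively done in~\cite{degond2020phase} via~\cite{wang2008unified} --- by passing to the~$Q$-tensor formulation, where the analogous integrals against~$\sqrt{\tfrac{1-u}{1+u}}\,e^{αu}$ become confluent hypergeometric (Kummer) functions and the sign count follows from properties of explicit ratios of these special functions. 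This analytic core is carried out in those references, which is why the statement is quoted here rather than reproved.
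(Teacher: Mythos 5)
The paper itself provides no proof here: it quotes the proposition directly from~\cite{degond2020phase}, and indeed the whole of Section~\ref{sec-polymers} is introduced as a proof-free summary (``therefore we will omit the proofs''). So there is no internal argument to compare against; you are reconstructing a proof from scratch.

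Your reconstruction is sound except for the one point you flag. The exponential-family rewriting of $c_1$ through the reference measure $w(u)\,\d u=\sqrt{\tfrac{1-u}{1+u}}\,\d u$ on $[-1,1]$ and the Langevin expression for $c_2$ are correct; so are $c_i'>0$ (a positive multiple of a variance), $c_i(0)=0$, $c_i'(0)=\tfrac16$ giving $\rho_c=6$, the Bessel identity $c_1(\alpha)=-\tfrac13+\tfrac{2I_1(\alpha)}{3\alpha(I_0(\alpha)-I_1(\alpha))}$, the $O(1/\alpha)$ asymptotics at $\pm\infty$, the inequality $g(x)=4\cosh x-x\sinh x-x^2-4<0$ for $x>0$ (the nicest self-contained piece of your argument, and it does settle the $c_2$ branch completely), and the reduction of the $c_1$ branch via $N=c_1-\alpha c_1'$, $N'=-\alpha c_1''$, $N(0)=0$, $N(-\infty)=-\tfrac13$, $N(+\infty)=1$. (One small point to guard: $\rho_1'=N/c_1^2$ is $0/0$ at $\alpha=0$, so the strict decrease there must be read off the expansion $\rho_1(\alpha)=6\bigl(1-\tfrac14\alpha+O(\alpha^2)\bigr)$, as you do.) The bracketed assumption that $\kappa_3(\mu_\alpha)$ --- equivalently $c_1''$ --- changes sign exactly once on $\mathbb{R}$ is, however, a genuine gap and not a side technicality: your sign analysis of $N$, the existence and uniqueness of $\alpha^*$, and hence the whole unimodal shape of $\rho_1$ rest on it, and nothing in your argument bounds the number of sign changes. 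You correctly identify that this is where the analytic content of~\cite{wang2008unified}, imported into~\cite{degond2020phase} through the $Q$-tensor isomorphism, actually lives. A self-contained version of your proof would need to carry out the Bessel-ratio (or Kummer-function) analysis you sketch, or an equivalent. As it stands the proposal is a correct skeleton with that one hard ingredient left as a citation --- which, to be fair, is also what the paper does for the entire proposition.
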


Figure~\eqref{fig-plots-c} depicts a plot of these functions~$\widetilde{c}_1^{↑}$ (solid),~$\widetilde{c}_1^{↓}$ (dashed), and~$\widetilde{c}_2$ (dashed-dot line), in log-scale for~$ρ∈[2,40]$. They represent the order parameters (up to sign) of the different families of steady-states. We also drew a solid line at level~$0$ for~$ρ<ρ_c$ and a dotted line at level~$0$ for~$ρ>ρ_c$, corresponding to the order parameter of the uniform steady-state (and illustrating the result of Proposition~\ref{prop-stability-uniform} regarding its stability). 
\begin{figure}[h!]
  \begin{center}
\includegraphics[width=14cm]{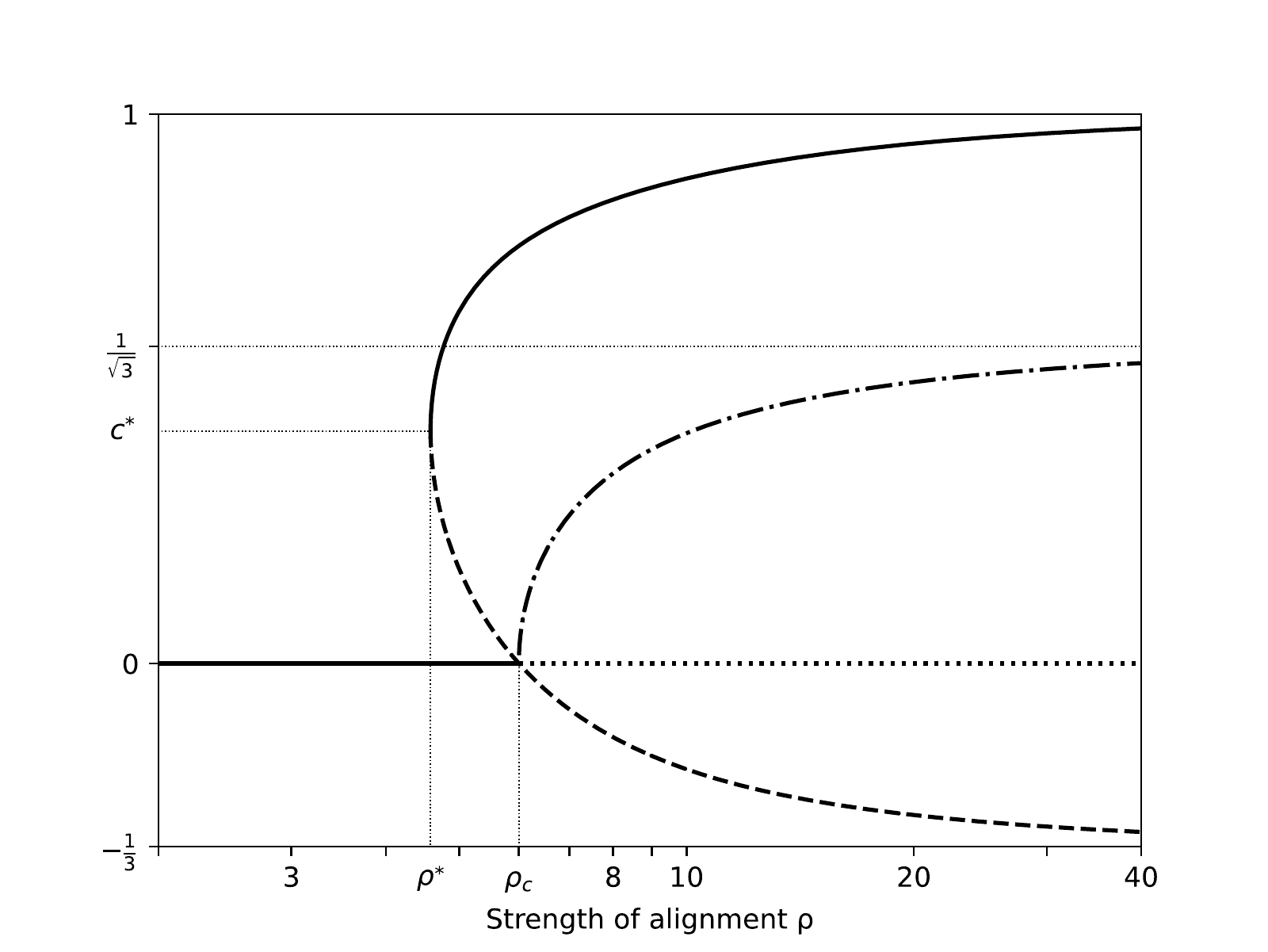}
\caption{\label{fig-plots-c}Behaviors of the functions~$\widetilde{c}_1^{↑}$ (solid line),~$\widetilde{c}_1^{↓}$ (dashed line) and~$\widetilde{c}_2$ (dashed-dot line).}
\end{center}
  
\end{figure}

We can therefore describe more precisely the long time behaviour of the solution to the Fokker--Planck equation according to the value of~$ρ$, thanks to Proposition~\ref{prop-lasalle}.

\begin{theorem} \label{thm-conv-steady} Let~$f_0$ be a nonnegative measure with mass~$ρ>0$, and~$f$ the solution to the Fokker--Planck equation~\eqref{FPJf} with initial condition~$f_0$. For the following statements, the notion of convergence is with respect to any~$H^s$ norm on~$SO_3(ℝ)$.
  \begin{itemize}
  \item If~$ρ<ρ^*$, the only steady-state is the uniform distribution on~$SO_3(ℝ)$, and the solution~$f(t,·)$ converges to this steady state as~$t→+∞$.
  \item If~$ρ^*⩽ρ⩽ρ_c$, there are three families of steady-states (two of which are equal when~$ρ=ρ^*$ or~$ρ=ρ_c$), and~$f(t,·)$ converges to one of these families:
    \begin{itemize}
      \item either there exists~$A_0(t)∈SO_3(ℝ)$ such that~$f(t,·)-ρM_{α_1^{↑}(ρ)A_0(t)}$ converges to zero,
      \item either~$f(t,·)$ converges to the uniform distribution on~$SO_3(ℝ)$,
      \item or there exists~$A_0(t)∈SO_3(ℝ)$ such that~$f(t,·)-ρM_{α_1^{↓}(ρ)A_0(t)}$ converges to zero, as~$t→+∞$.
    \end{itemize}
  \item If~$ρ>ρ_c$, there is an additional family of steady-states, and~$f(t,·)$ converges to one of these four families:
    \begin{itemize}
      \item either there exists~$A_0(t)∈SO_3(ℝ)$ such that~$f(t,·)-ρM_{α_1^{↑}(ρ)A_0(t)}$ converges to zero,
      \item either~$f(t,·)$ converges to the uniform distribution on~$SO_3(ℝ)$,
      \item either there exists~$A_0(t)∈SO_3(ℝ)$ such that~$f(t,·)-ρM_{α_1^{↓}(ρ)A_0(t)}$ converges to zero,
      \item or there exist unit vectors~$\mathbf{a}_0(t),\mathbf{b}_0(t)$ such that~$f(t,·)-ρM_{α_2^{↑}(ρ)\sqrt{3}\mathbf{a}_0(t)⊗\mathbf{b}_0(t)}$ converges to zero, as~$t→+∞$.
    \end{itemize}
  \end{itemize}
\end{theorem}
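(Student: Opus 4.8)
The plan is to obtain the theorem as an essentially immediate consequence of the LaSalle-type convergence in Proposition~\ref{prop-lasalle}, the classification of steady states in Propositions~\ref{prop-equilibria} and~\ref{prop-solutions-compat} together with the preceding proposition on $c_1$ and $c_2$, plus a short topological argument. Proposition~\ref{prop-lasalle} already gives that $f(t,\cdot)$ converges in every $H^s$ norm to the nonempty set $\mathcal{E}_\infty$ of steady states of mass $\rho$ whose free energy equals $\mathcal{F}_\infty=\lim_{t\to\infty}\mathcal{F}[f(t,\cdot)]$, and Propositions~\ref{prop-equilibria} and~\ref{prop-solutions-compat} say that every such steady state lies in one of at most four orbits: the singleton $\{\rho\}$ (the uniform distribution, $J=0$); for each nonzero $\alpha$ solving the scalar equation~\eqref{compat1} --- that is, by the preceding proposition, $\alpha\in\{\alpha_1^{\uparrow}(\rho),\alpha_1^{\downarrow}(\rho)\}$, which is empty when $\rho<\rho^*$ and in which $\alpha_1^{\downarrow}(\rho)=0$ exactly at $\rho=\rho_c$ --- the orbit $\mathcal{O}_{\alpha}^{(1)}=\{\rho M_{\alpha A_0}:A_0\in SO_3(\mathbb{R})\}$; and, only when $\rho>\rho_c$, for the unique positive $\alpha$ solving~\eqref{compat2}, the orbit $\mathcal{O}_{\alpha}^{(2)}=\{\rho M_{\alpha\sqrt3\,\mathbf{a}_0\otimes\mathbf{b}_0}:\mathbf{a}_0,\mathbf{b}_0\in\mathbb{S}_2\}$.

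First I would record three facts about these orbits. Each is the continuous image of $SO_3(\mathbb{R})$ or of $\mathbb{S}_2\times\mathbb{S}_2$ under $J\mapsto\rho M_J$, hence compact and connected in $H^s$. They are pairwise disjoint: $M_J=M_{J'}$ forces $J=J'$ (compare logarithms, integrate using $\int_{SO_3(\mathbb{R})}A\,\d A=0$, and use that $SO_3(\mathbb{R})$ spans $M_3(\mathbb{R})$), and the matrices $0$, $\alpha A_0$ (with $\alpha\ne0$, of rank $3$) and $\alpha\sqrt3\,\mathbf{a}_0\otimes\mathbf{b}_0$ (with $\alpha>0$, of rank $1$) obviously record which orbit they belong to and the value of $\alpha$. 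And $\mathcal{F}$ is constant on each orbit, because $\mathcal{F}$ is invariant under the substitutions $f\mapsto f(R^\top\cdot\,S)$ for $R,S\in SO_3(\mathbb{R})$ (bi-invariance of the Haar measure, $\mathcal{J}$ transforming equivariantly so that $\|\mathcal{J}[\cdot]\|$ is unchanged), while $\rho M_{\alpha A_0}(R^\top\cdot)=\rho M_{\alpha RA_0}$ and $\rho M_{\alpha\sqrt3\,\mathbf{a}_0\otimes\mathbf{b}_0}(R^\top\cdot\,S)=\rho M_{\alpha\sqrt3\,(R\mathbf{a}_0)\otimes(S\mathbf{b}_0)}$, the normalizations $\mathcal{Z}(\alpha A_0)$ and $\mathcal{Z}(\alpha\sqrt3\,\mathbf{a}_0\otimes\mathbf{b}_0)$ being unchanged under these substitutions. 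Consequently $\mathcal{E}_\infty$ is exactly the union of those (finitely many) orbits on which $\mathcal{F}=\mathcal{F}_\infty$; being a finite disjoint union of compact sets, these orbits are separated by some distance $2\delta>0$.

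The remaining step is purely topological. The curve $t\mapsto f(t,\cdot)\in H^s$ is continuous on $(0,+\infty)$ and $\mathrm{dist}_{H^s}(f(t,\cdot),\mathcal{E}_\infty)\to0$, so for $T$ large enough the connected set $\{f(t,\cdot):t\ge T\}$ lies in the disjoint union of the open $\delta$-neighborhoods of the orbits comprising $\mathcal{E}_\infty$, hence in a single one, say that of $\mathcal{O}$; then $\mathrm{dist}_{H^s}(f(t,\cdot),\mathcal{O})\to0$, and picking a nearest point on $\mathcal{O}$ at each time yields exactly one of the convergences claimed --- either $f(t,\cdot)$ tends to the uniform distribution $\rho$, or $f(t,\cdot)-\rho M_{\alpha_1^{\uparrow}(\rho)A_0(t)}\to0$, or $f(t,\cdot)-\rho M_{\alpha_1^{\downarrow}(\rho)A_0(t)}\to0$, or $f(t,\cdot)-\rho M_{\alpha_2(\rho)\sqrt3\,\mathbf{a}_0(t)\otimes\mathbf{b}_0(t)}\to0$ --- according to which orbit $\mathcal{O}$ is. Enumerating the orbits present when $\rho<\rho^*$, when $\rho^*\le\rho\le\rho_c$ and when $\rho>\rho_c$, as in the preceding proposition, then produces the three cases of the statement. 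I do not expect a real obstacle here: the hard analytic work --- solving the compatibility equation~\eqref{compatJ} --- has been imported from \cite{degond2020phase,wang2008unified}, and the only points to treat with a little care are that $\mathcal{E}_\infty$ consists of \emph{whole} orbits (which is why the invariance of $\mathcal{F}$ under the $SO_3(\mathbb{R})$-actions matters) and that distinct orbits stay uniformly apart, so that the trajectory cannot shuttle between two of them in the limit.
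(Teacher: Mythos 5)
Your proposal is correct and follows essentially the same route as the paper: classification via Propositions~\ref{prop-equilibria} and~\ref{prop-solutions-compat}, convergence to $\mathcal{E}_\infty$ via Proposition~\ref{prop-lasalle}, and the observation that the distinct families are separated (the paper phrases this as "they would belong to different connected components of $\mathcal{E}_\infty$"). The only difference is one of detail — the paper's proof is a single sentence, while you fill in the verification that the orbits are compact, connected, pairwise disjoint and $\mathcal{F}$-invariant, and supply the topological argument forcing the continuous trajectory into a single $\delta$-neighborhood; this is exactly the step the paper leaves implicit.
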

\begin{proof} This result is a summary of the possible steady-states according to Proposition~\ref{prop-equilibria} and~\ref{prop-solutions-compat}. The convergence of~$f$ to one of this families comes from Proposition~\ref{prop-lasalle} and from the fact that, even if the limit set~$\mathcal{E}_∞$ of equilibria may consist of several distinct such families, they would belong to different connected components of~$\mathcal{E}_∞$.
\end{proof}

Let us now try to understand the stability of each of these families of equilibria. Figure~\ref{fig-scatter-c} is a zoom on the region~$ρ∈[3,8]$ of the plots of the functions~$\widetilde{c}_1^{↑}$,~$|\widetilde{c}_1^{↓}|$ and~$\widetilde{c}_2$ (remember that these functions are the order parameters of the corresponding steady-states), on top of the final values of the order parameters of the numerical simulations which were given in the right part of Figure~\ref{fig-scatterplots}. It suggests the only stable equilibria, apart from the uniform one when~$ρ<ρ_c$, are those corresponding to the curve~$\widetilde{c}_1^{↑}$. This is indeed what we will show in the next section.

\begin{figure}[h!]
  \begin{center}
\includegraphics[width=15cm]{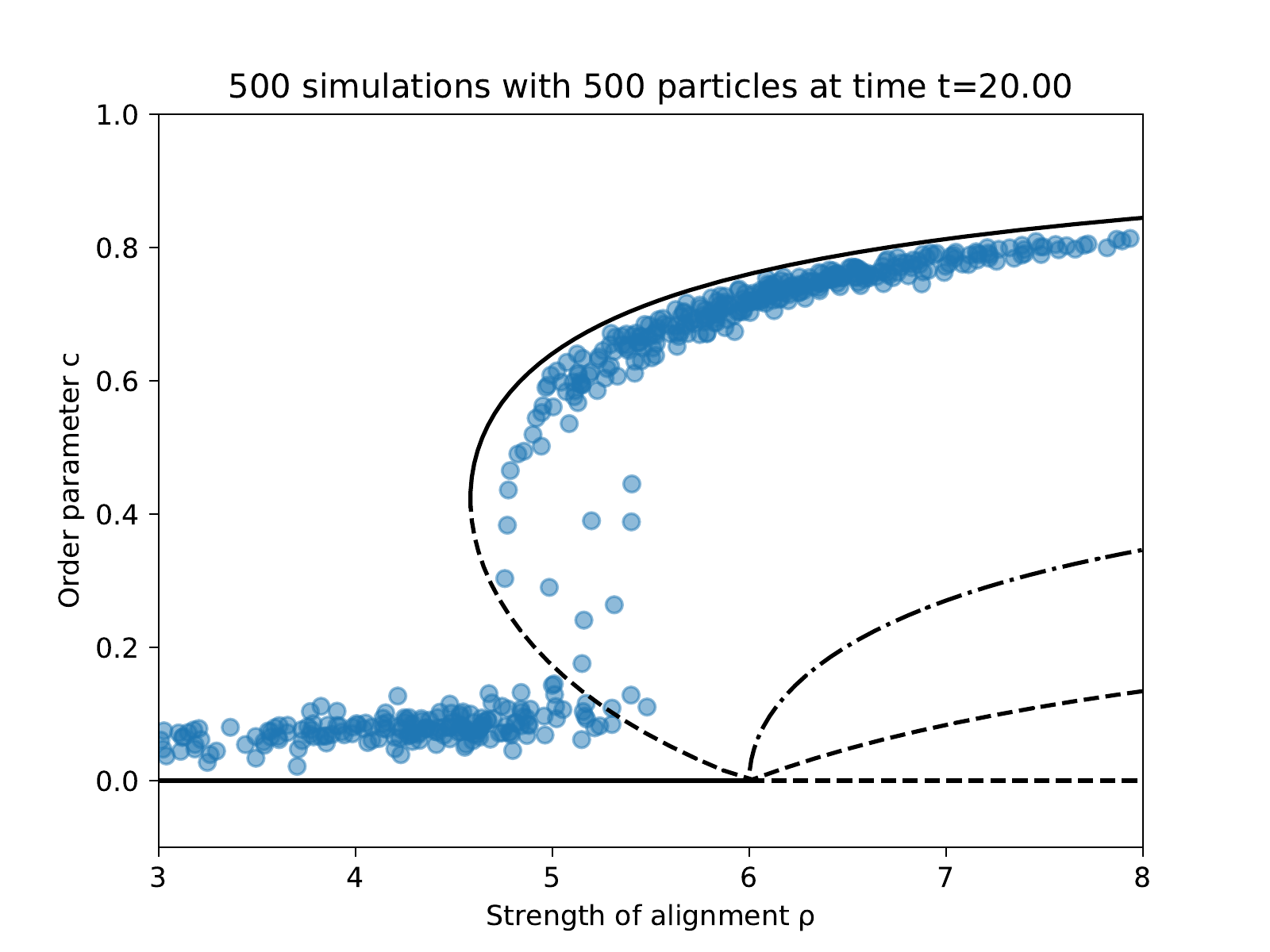}
\caption{\label{fig-scatter-c}Behaviors of the functions~$\widetilde{c}_1^{↑}$,~$|\widetilde{c}_1^{↓}|$ and~$\widetilde{c}_2$ and final order parameters of the numerical simulations.}
\end{center}
  
\end{figure}
\section{Stability results thanks to a BGK model}
\label{sec-BGK}
Instead of the Fokker--Planck equation~\eqref{FPJf}, let us consider the following BGK equation:
\begin{equation}
  \label{BGKJf} ∂_tf=ρM_{\mathcal{J}[f]}-f.
\end{equation}
This is still an equation where the total mass is preserved and for which the steady states satisfy the same compatibility equation: if~$f$ is a steady-state, it has to be of the form~$ρM_J$ where~$J=\mathcal{J}[f]=ρ\mathcal{J}[M_J]$. Therefore these two evolution equations share the same steady-states, which were determined in~\cite{degond2020phase} and summarized in the previous section. Let us now give a summary of the results of stability of these equilibria which were obtained in~\cite{degond2020phase}. It happens that these two evolution equations (BGK and Fokker--Planck) also share the same property of dissipation of the free energy~$\mathcal{F}$: if~$f$ is a positive solution to~\eqref{BGKJf}, then by multiplying both sides by~$\ln f(A)-A·\mathcal{J}[f]$ and integrating on~$SO_3(ℝ)$, we obtain
\begin{equation*}\frac{\d}{\d t}\mathcal{F}[f]+\widetilde{\mathcal{D}}[f]=0\label{dissipationBGK},
\end{equation*}
where~$\mathcal{F}[f]$ is given by~\eqref{def-F} and
\begin{equation*}
  \label{def-Dtilde}
  \widetilde{\mathcal{D}}[f]=∫_{SO_3(ℝ)}(f-ρM_{\mathcal{J}[f]})\big(\ln f - \ln(ρM_{\mathcal{J}[f]})\big)\d A⩾0.
\end{equation*}
Then, by writing~$J(t)=\mathcal{J}[f(t,·)]$ where~$f$ is a solution of the BGK equation~\eqref{BGKJf}, we obtain that~$J$ satisfies an ordinary differential equation:
\begin{equation}
  \label{ODEJBGK}
  \frac{\d}{\d t}J=ρ\mathcal{J}[M_J]-J.
\end{equation}
The long-time behaviour of the solution of the BGK equation is much simpler to study, since it can be reduced to the study of a finite dimensional ODE.

A further reduction can be done through the special singular value decomposition, for which we state a result which will be useful in the following.
\begin{proposition}\label{ssvd} If~$J∈M_3(ℝ)$, we call Special Singular Value Decomposition (SSVD) of~$J$ a decomposition of the form~$J=PDQ$ where~$D=\mathrm{diag}(d_1,d_2,d_3)$ is a diagonal matrix satisfying~$d_1⩾d_2⩾|d_3|$ and~$P,Q∈SO_3(ℝ)$.

  Such a SSVD always exists, and the matrix~$D$ is unique (the rotations~$P$ and~$Q$ may not be unique). Furthermore, we have
  \begin{equation}\label{minSSVD}
    \min_{A∈SO_3(ℝ)}∥J-A∥=∥J-PQ∥=∥D-I_3∥.
  \end{equation}
\end{proposition}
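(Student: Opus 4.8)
The plan is to construct the SSVD from the ordinary singular value decomposition by correcting determinants, and then to compute $\min_{A\in SO_3(\mathbb{R})}\|J-A\|$ by reducing it, via the rotational invariance of the norm and Rodrigues' formula~\eqref{eqRodrigues}, to an affine optimization in one real variable. For existence, I would start from an SVD $J=U\Sigma V^{\top}$ with $U,V\in O_3(\mathbb{R})$ and $\Sigma=\mathrm{diag}(\sigma_1,\sigma_2,\sigma_3)$, $\sigma_1\geq\sigma_2\geq\sigma_3\geq0$. If $\det U=\det V$ (both are $\pm1$), then either $U\in SO_3(\mathbb{R})$ and $V^{\top}\in SO_3(\mathbb{R})$ already, or one replaces $U$ by $US$ and $V^{\top}$ by $SV^{\top}$ with $S=\mathrm{diag}(1,1,-1)$, which leaves $\Sigma$ unchanged; either way one obtains $P,Q\in SO_3(\mathbb{R})$ and $D=\Sigma$. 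If $\det U\neq\det V$, one absorbs a single copy of $S$ into $P$ or $Q$, which produces $P,Q\in SO_3(\mathbb{R})$ and $D=\mathrm{diag}(\sigma_1,\sigma_2,-\sigma_3)$. In every case $d_1\geq d_2\geq|d_3|$. For uniqueness of $D$, since $P$ and $Q$ are orthogonal the singular values of $D$ equal those of $J$, so the inequalities $d_1\geq d_2\geq|d_3|\geq0$ force $d_1=\sigma_1(J)$, $d_2=\sigma_2(J)$, $|d_3|=\sigma_3(J)$; and taking determinants in $J=PDQ$ gives $d_1d_2d_3=\det J$, which fixes the sign of $d_3$ when $\sigma_3(J)>0$ (and $d_3=0$ otherwise).

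For the distance formula I would first record that $\|PMQ\|=\|M\|$ for all $P,Q\in SO_3(\mathbb{R})$ and $M\in M_3(\mathbb{R})$, which is immediate from $A\cdot B=\tfrac12\mathrm{Tr}(AB^{\top})$ and the cyclicity of the trace. Since $A\mapsto P^{\top}AQ^{\top}$ is a bijection of $SO_3(\mathbb{R})$ (its inverse being $R\mapsto PRQ$), and $J-PRQ=P(D-R)Q$, this gives $\min_{A\in SO_3(\mathbb{R})}\|J-A\|=\min_{R\in SO_3(\mathbb{R})}\|D-R\|$, with a minimizer $A_{*}$ corresponding to $R_{*}=P^{\top}A_{*}Q^{\top}$. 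As $\|R\|=\|I_3\|$ for $R\in SO_3(\mathbb{R})$, expanding $\|D-R\|^{2}=\|D\|^{2}+\|I_3\|^{2}-2\,D\cdot R$ shows that minimizing $\|D-R\|$ amounts to maximizing $D\cdot R=\tfrac12\sum_i d_iR_{ii}$ over $R\in SO_3(\mathbb{R})$.

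To maximize $\sum_i d_iR_{ii}$ I would write $R=R(\theta,\mathbf{n})$ as in~\eqref{eqRodrigues}: the antisymmetric term $\sin\theta\,[\mathbf{n}]_{\times}$ has zero diagonal, so $R_{ii}=\cos\theta+(1-\cos\theta)\,n_i^2$ and hence $\sum_i d_iR_{ii}=\cos\theta\,(d_1+d_2+d_3)+(1-\cos\theta)\sum_i d_in_i^2$. Setting $t=\cos\theta\in[-1,1]$ and $s=\sum_i d_in_i^2$, the quantity $s$ is a convex combination of $d_1,d_2,d_3$ and ranges over the whole interval $[d_3,d_1]$ as $\mathbf{n}$ runs over the unit sphere, independently of $t$; so the problem becomes to maximize $g(t,s)=tS+(1-t)s$ over $[-1,1]\times[d_3,d_1]$, with $S=d_1+d_2+d_3$. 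Since $g$ is affine in $t$ for fixed $s$ and affine in $s$ for fixed $t$, its maximum is attained at a vertex of the rectangle; comparing the four vertex values, the hypotheses $d_2\geq|d_3|$ (hence $d_2+d_3\geq0$) and $d_1\geq0$ show that the largest is $S$, attained at $t=1$, i.e.\ $R=I_3$. Therefore $\min_{R\in SO_3(\mathbb{R})}\|D-R\|=\|D-I_3\|$ with $R_{*}=I_3$, so $A_{*}=PI_3Q=PQ$ and $\min_{A\in SO_3(\mathbb{R})}\|J-A\|=\|J-PQ\|=\|D-I_3\|$.

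I expect the main obstacle to be precisely this last optimization when $d_3<0$: the elementary bound $\sum_i d_iR_{ii}\leq\sum_i|d_i|$ coming from $|R_{ii}|\leq1$ fails to be sharp, and one really has to use the constraint $\det R=1$. The Rodrigues parametrization is what makes this transparent, reducing it to an affine maximization over a rectangle in which the hypothesis $d_1\geq d_2\geq|d_3|$ intervenes exactly at the comparison of vertices.
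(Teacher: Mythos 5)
Your proof is correct, and it reaches the conclusion by a genuinely different route in the key optimization step. The existence, uniqueness of~$D$, and the reduction of~$\min_{A\in SO_3(\mathbb{R})}\|J-A\|$ to maximizing~$D\cdot R$ over~$R\in SO_3(\mathbb{R})$ are the same as in the paper. The divergence is in how that maximization is solved: the paper invokes Horn's theorem, namely that the diagonal parts of rotation matrices form the tetrahedron~$\mathcal{T}=\mathrm{conv}\{(\pm1,\pm1,\pm1)\ \text{with an even number of minus signs}\}$, reducing the problem to evaluating the linear functional~$\mathbf{d}\cdot\mathbf{x}$ at the four vertices of~$\mathcal{T}$ and using~$d_1\geq d_2\geq|d_3|$ to see that~$(1,1,1)$ wins. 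You instead parametrize~$R=R(\theta,\mathbf{n})$ via Rodrigues' formula, observe that~$R_{ii}=\cos\theta+(1-\cos\theta)n_i^2$, set~$t=\cos\theta$ and~$s=\sum_i d_in_i^2\in[d_3,d_1]$, and maximize the separately-affine function~$g(t,s)=tS+(1-t)s$ over the rectangle~$[-1,1]\times[d_3,d_1]$, where again~$d_2\geq|d_3|$ (so~$d_2+d_3\geq 0$) and~$d_1,d_2\geq 0$ enter precisely at the vertex comparison. Both are valid; the paper's approach is shorter once one accepts Horn's result as given, whereas yours is self-contained and makes transparent exactly where the constraint~$\det R=1$ (through the Rodrigues parametrization) is used and why the naive bound~$\sum_i d_i R_{ii}\leq\sum_i|d_i|$ is not sharp when~$d_3<0$.
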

\begin{proof}
  The existence and uniqueness can be obtained through the singular value decomposition, and modifying the orthogonal matrices if necessary to change the sign of the last entry of the diagonal part and get special orthogonal matrices, see~\cite{degond2020phase}.
  We now compute
  \[∥J-A∥^2=∥D-P^{\top}AQ^{\top}∥^2=∥D∥^2-2\,B·D+\frac32,\]
  where~$B=P^{\top}AQ^{\top}$. Therefore minimizing~$∥J-A∥$ for~$A∈SO_3(ℝ)$ amounts to maximizing~$B·D$, for~$B∈SO_3(ℝ)$. The set of diagonal parts of rotation matrices (seen as vectors of~$ℝ^3$) is given by Horn’s tetrahedron~\cite{horn1954doubly}: this is the convex hull~$\mathcal{T}$ of the points~$(±1,±1,±1)$ with an even number of minus signs. Therefore we want to maximize~$\mathbf{x}·\mathbf{d}$ for~$\mathbf{x}∈\mathcal{T}$ and~$\mathbf{d}=(d_1,d_2,d_3)$. This convex function reaches it maximum on extremal points of~$\mathcal{T}$, that is to say on one of the vertices of~$\mathcal{T}$. Since we have
  \[d_1+d_2+d_3⩾d_1-d_2-d_3⩾-d_1+d_2-d_3⩾-d_1-d_2-d_3,\]
  we see that the maximum is reached for~$\mathbf{x}=(1,1,1)$. Therefore the maximum of~$B·D$ for~$B∈SO_3(ℝ)$ is reached for~$B=I_3$, which ends the proof\footnote{Let us remark that if~$d_2>-d_3$, the maximum of~$\mathbf{x}·\mathbf{d}$ is unique on~$\mathcal{T}$ and since the only rotation matrix for which the diagonal part is~$(1,1,1)$ is the identity matrix~$I_3$, we get that the minimizer~$PQ$ of~\eqref{minSSVD} is unique. So even if~$P$ and~$Q$ may not be unique, in that case the matrix~$PQ$ is unique, and could be seen as a Special Polar Decomposition of~$J$ (with the analogy with the fact that if~$\det J>0$, then~$J=PDQ$ is the singular value decomposition of~$J$ and~$PQ$ is the polar decomposition of~$J$~\cite{degond2017new}).}. 
\end{proof}

With this definition of the SSVD, the reduction that can be done is that the flow of the ODE~\eqref{ODEJBGK} preserves the SSVD: if a SSVD of the initial condition is given by~$J(0)=PD_0Q$, then for all time~$t$, we have the following SSVD:~$J(t)=PD(t)Q$, with the same rotation matrices~$P$ and~$Q$, and where~$D(t)=(d_1(t),d_2(t),d_3(t))$ is a diagonal matrix satisfying the same ODE~\eqref{ODEJBGK} as~$J$, with initial condition~$D_0$ (the fact that the matrix is diagonal and the inequalities~$d_1(t)⩾d_2(t)⩾|d_3(t)|$ are preserved by the flow of this ODE). We therefore only have to study a three-dimensional ODE.
Finally, the last observation we can do is that the flow of the ODE~\eqref{ODEJBGK} is actually a gradient flow of a potential: if we write
\begin{equation}\label{def-V}
  V(J)=\frac12∥J∥^2-ρ\ln \mathcal{Z}(J)\text{, where }\mathcal{Z}(J)=∫_{SO_3(ℝ)}\exp(J·A)\d A,
\end{equation}
as in the definition~\eqref{defVM} of the generalized von Mises distribution, we obtain
\begin{equation}\label{gradV}
  ∇V(J)=J-ρ\mathcal{J}[M_J],
\end{equation}
where the gradient is taken with respect to the inner product of~$M_3(ℝ)$ given by~\eqref{dotSO3}.

Therefore the ODE~\eqref{ODEJBGK} is simply~$\frac{\d}{\d t}J=-∇V(J)$, and one can prove that any solution will converge to a critical point of~$V$, which corresponds to a solution of the compatibility equation~\eqref{compatJ}. We then obtain the same type of convergence as in Theorem~\ref{thm-conv-steady}. The main difference is that we have convergence to a unique steady-state (and not to a set of steady-states), that can be determined by knowing a special singular value decomposition of~$\mathcal{J}[f_0]$. The other difference is that the convergence does not takes place in any Sobolev space~$H^s$: the BGK equation is not regularizing in time. The following proposition is a summary of results in~\cite{degond2020phase}:
\begin{proposition} \label{prop-conv-steady-BGK} Let~$f_0$ be a finite nonnegative measure with mass~$ρ>0$, and~$f$ the solution to the BGK equation~\eqref{BGKJf} with initial condition~$f_0$. We write the decomposition~$\mathcal{J}[f_0]=P_0D_0Q_0$, where~$P_0,Q_0∈SO_3(ℝ)$ and~$D_0=\mathrm{diag}(d_{1,0},d_{2,0},d_{3,0})$, with~$d_{1,0}⩾d_{2,0}⩾|d_{3,0}|$ (special singular value decomposition).  
  Then for all~$t∈ℝ$, we have~$\mathcal{J}[f(t,·)]=P_0D(t)Q_0$, where~$D(t)=\mathrm{diag}(d_1(t),d_2(t),d_3(t))$ is the solution to the ODE~\eqref{ODEJBGK} with initial condition~$D_0$, satisfying~$d_1(t)⩾d_2(t)⩾|d_3(t)|$. In the following statements, the notion of convergence of~$f(t,·)$ is in the space of measures (or any normed space for which~$f_0$ is an element and for which the map~$f↦\mathcal{J}[f]$ is continuous).
  \begin{itemize}
  \item If~$ρ<ρ^*$, then~$D(t)→0$ and~$f(t,·)$ converges to the uniform distribution as~$t→+∞$.
  \item If~$ρ^*⩽ρ⩽ρ_c$, there are three families of steady-states (two of which are equal when~$ρ=ρ^*$ or~$ρ=ρ_c$), and~$f(t,·)$ converges to one of these steady-states, as~$t→+∞$:
    \begin{itemize}
      \item either~$D(t)→0$, and~$f(t,·)$ converges to the uniform distribution,
      \item either~$D(t)→α_1^{↑}(ρ)I_3$, and~$f(t,·)→ρM_{α_1^{↑}(ρ)A_0}$ where~$A_0=P_0Q_0$,
      \item or~$D(t)→α_1^{↓}(ρ)I_3$, and~$f(t,·)→ρM_{α_1^{↓}(ρ)A_0}$ where~$A_0=P_0Q_0$.
    \end{itemize}
  \item If~$ρ>ρ_c$, there is an additional family of steady-state, and~$f(t,·)$ converges to one of these steady-states, as~$t→+∞$:
    \begin{itemize}
      \item either~$D(t)→0$, and~$f(t,·)$ converges to the uniform distribution,
      \item either~$D(t)→α_1^{↑}(ρ)I_3$, and~$f(t,·)→ρM_{α_1^{↑}(ρ)A_0}$ where~$A_0=P_0Q_0$,
      \item either~$D(t)→α_1^{↓}(ρ)\,\mathrm{diag}(-1,-1,1)$, and~$f(t,·)$ converges to~$ρM_{α_1^{↓}(ρ)A_0}$, where~$A_0=P_0\,\mathrm{diag}(-1,-1,1)Q_0$
      \item or~$D(t)→α_2(ρ)\mathrm{diag}(\sqrt{3},0,0)$, and~$f(t,·)$ converges to~$ρM_{α_2(ρ)\sqrt{3}\mathbf{a}_0⊗\mathbf{b}_0}$, with~$\mathbf{a}_0=P_0\mathbf{e}_1$ and~$\mathbf{b}_0=Q_0^{\top}\mathbf{e}_1$ (where~$\mathbf{e}_1$ is the first element of the canonical basis of~$ℝ^3$).
    \end{itemize}
  \end{itemize}
\end{proposition}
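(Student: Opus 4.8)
The plan is to reduce the BGK dynamics to the finite-dimensional gradient flow~\eqref{ODEJBGK} and to analyse that flow, as sketched before the statement. Since the only nonlinearity in~\eqref{BGKJf} occurs through~$\mathcal{J}[f]$, applying~$\mathcal{J}$ to both sides and using its linearity produces the closed ordinary differential equation~\eqref{ODEJBGK} for~$J(t)=\mathcal{J}[f(t,\cdot)]$, which by~\eqref{gradV} is the gradient flow~$\dot J=-\nabla V(J)$. Conversely, once~$J(t)$ is known, Duhamel's formula gives~$f(t,\cdot)=e^{-t}f_0+\rho\int_0^t e^{-(t-s)}M_{J(s)}\,\d s$; hence any convergence~$J(t)\to J_\infty$ forces~$f(t,\cdot)\to\rho M_{J_\infty}$ (the first term vanishes, and the integral converges by dominated convergence, using continuity of~$J\mapsto M_J$ and boundedness of the~$J$-trajectory) in any norm making~$f\mapsto\mathcal{J}[f]$ continuous. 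So everything reduces to the ODE on~$M_3(\mathbb{R})$.

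Next I would exploit the equivariance of~$J\mapsto\rho\mathcal{J}[M_J]$: bi-invariance of the Haar measure on~$SO_3(\mathbb{R})$, together with the transformation rule~$(PJQ)\cdot A=J\cdot(P^\top AQ^\top)$ for the scalar product~\eqref{dotSO3}, gives~$\mathcal{Z}(PJQ)=\mathcal{Z}(J)$ and~$\mathcal{J}[M_{PJQ}]=P\,\mathcal{J}[M_J]\,Q$, hence~$V(PJQ)=V(J)$ and~$\nabla V(PJQ)=P\,\nabla V(J)\,Q$, for all~$P,Q\in SO_3(\mathbb{R})$. Taking a special singular value decomposition~$\mathcal{J}[f_0]=P_0D_0Q_0$ from Proposition~\ref{ssvd}, uniqueness for the ODE then forces~$J(t)=P_0D(t)Q_0$, where~$D(t)$ solves the same ODE with diagonal initial datum~$D_0$; one checks that the subspace of diagonal matrices is invariant (applying the equivariance with~$P=Q$ equal to the sign-flip rotations~$\mathrm{diag}(1,-1,-1)$, etc., shows~$\mathcal{J}[M_D]$ is diagonal whenever~$D$ is), and that Horn's region~$\{d_1\geqslant d_2\geqslant|d_3|\}$ is forward-invariant, by verifying on each bounding face that the vector field points inward --- which reduces to comparing diagonal entries of~$\rho\mathcal{J}[M_D]$. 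This brings the problem down to a three-dimensional gradient flow~$\dot D=-\nabla v(D)$ on that region.

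Then I would classify the critical points of~$v$, i.e.\ the diagonal solutions of the compatibility equation: by Proposition~\ref{prop-solutions-compat} these are~$D=0$, the points~$\alpha\,I_3$ with~$\alpha=\rho c_1(\alpha)$, their sign-flipped counterparts~$\alpha\,\mathrm{diag}(-1,-1,1)$ (the same equilibrium family written with ordered singular values, present for~$\rho>\rho_c$), and~$\alpha\sqrt3\,\mathrm{diag}(1,0,0)$ with~$\alpha=\rho c_2(\alpha)$. These critical points are isolated (e.g.\ since~$V$ is real-analytic), and computing the Hessian of~$v$ at each of them using the monotonicity of~$\rho_1:\alpha\mapsto\alpha/c_1(\alpha)$ and~$\rho_2:\alpha\mapsto\alpha/c_2(\alpha)$ from Section~\ref{sec-polymers} yields: for~$\rho<\rho^*$, only~$D=0$, a strict minimum; for~$\rho^*\leqslant\rho\leqslant\rho_c$, additionally~$\alpha_1^\uparrow(\rho)I_3$ (local minimum) and~$\alpha_1^\downarrow(\rho)I_3$ (unstable), with~$D=0$ still a local minimum losing stability precisely at~$\rho_c$ (consistently with Proposition~\ref{prop-stability-uniform}); for~$\rho>\rho_c$, in addition~$D=0$ becomes unstable and the~$\mathrm{diag}(-1,-1,1)$-type and~$\mathrm{diag}(\sqrt3,0,0)$-type critical points appear, both unstable. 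Since a gradient flow with isolated critical points converges to one of them, and a bounded trajectory converges to a critical point only if it lies on that point's stable manifold, and since the stable manifolds of the uniform, the~$\mathrm{diag}(-1,-1,1)$-type and the~$\mathrm{diag}(\sqrt3,0,0)$-type equilibria have positive codimension in the relevant ranges of~$\rho$, the limit of~$D(t)$ is pinned down by the position of~$D_0$; this gives the three- or four-way alternatives of the statement. Translating back through~$J(t)=P_0D(t)Q_0$ and recording~$A_0=P_0Q_0$, resp.~$A_0=P_0\,\mathrm{diag}(-1,-1,1)\,Q_0$, $\mathbf{a}_0=P_0\mathbf{e}_1$, $\mathbf{b}_0=Q_0^\top\mathbf{e}_1$, together with the Duhamel recovery of~$f$, yields the stated lists of limits.

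The main obstacle is in the second and third steps: establishing that Horn's ordered region is genuinely forward-invariant for the reduced ODE (so that the special singular value decomposition is really preserved along the flow), and extracting the exact Morse indices of~$v$ at each family of equilibria --- equivalently, identifying the stable manifolds of the unstable equilibria --- purely from the monotonicity of~$\rho_1$ and~$\rho_2$. These are precisely the computations carried out in~\cite{degond2020phase}, which I would import; the rest of the argument (the decoupling of~$J$, the Duhamel recovery of~$f$, and convergence of a gradient flow with isolated critical points) is soft.
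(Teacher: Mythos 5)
Your proposal follows the same route the paper takes, namely the one sketched in the text immediately preceding the statement: reduce~\eqref{BGKJf} to the closed ODE~\eqref{ODEJBGK} for~$J(t)=\mathcal{J}[f(t,\cdot)]$, recognize it as the gradient flow~$\dot J=-\nabla V$, use the SSVD equivariance to pass to a three-dimensional gradient flow on the Horn cone of ordered diagonal matrices, classify its critical points via Proposition~\ref{prop-solutions-compat}, and conclude convergence of a bounded gradient trajectory with isolated equilibria. The paper itself states the result as ``a summary of results in~\cite{degond2020phase}'' and does not reprove it, so you are essentially reconstructing the argument of that reference; your contribution is in filling in a couple of useful mechanical details the paper leaves implicit.

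Two of those additions are genuinely helpful: the Duhamel representation~$f(t,\cdot)=e^{-t}f_0+\rho\int_0^t e^{-(t-s)}M_{J(s)}\,\d s$, which cleanly transfers convergence of~$J$ to convergence of~$f$ in any norm in which~$f_0$ lives and~$J\mapsto M_J$ is continuous, and the explicit bi-equivariance~$\mathcal{J}[M_{PJQ}]=P\,\mathcal{J}[M_J]\,Q$ (together with the diagonal sign-flip trick to show~$\mathcal{J}[M_D]$ is diagonal). Two small remarks. First, ``real-analytic'' does not by itself give isolation of critical points; here it is the explicit finite classification of Proposition~\ref{prop-solutions-compat} (within the Horn cone) that yields isolation, so you should cite that rather than analyticity. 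Second, the last step about stable manifolds of positive codimension ``pinning down'' which equilibrium is selected is not needed for this proposition, which only asserts convergence to \emph{some} equilibrium in the list and identifies~$A_0$ from the fixed~$P_0,Q_0$; the Morse-index / signature analysis belongs to the stability Proposition~\ref{prop-stability-BGK} that follows. Neither point is an error, just a mild overshoot. The essential facts you import from~\cite{degond2020phase} --- the invariance of Horn's cone and the signatures of~$\mathrm{Hess}\,\overline V$ --- are exactly what the paper imports as well.
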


We now turn to stability results. For convenience, we will denote~$\overline{V}$ the restriction of~$V$ to the space of diagonal matrices. Its Hessian~$\mathrm{Hess}\,\overline{V}$ is then a symmetric bilinear form on a space of dimension~$3$. Thanks to the study of the signature of this Hessian, we obtained in~\cite{degond2020phase} the characterization of the stability of all steady-states. The next proposition is a summary of these results (without details on the domains of convergence):

\begin{proposition}\label{prop-stability-BGK}
  The uniform steady-state for the BGK equation~\eqref{BGKJf} corresponds to the critical point~$0$ of the potential~$\overline{V}$ (and~$V$).
  \begin{itemize}
  \item If~$0<ρ<ρ_c$, the Hessian~$\mathrm{Hess}\,\overline{V}(0)$ has signature~$(+++)$ (and so~$0$ is a local minimizer of~$V$). Therefore the uniform steady-state is locally asymptotically stable (with exponential rate of convergence). 
  \item If~$ρ>ρ_c$, the signature is~$(---)$ (therefore~$0$ is not a local minimizer of~$V$), and the uniform steady-state is unstable.
  \end{itemize}
  When~$ρ⩾ρ^*$, the steady-states of the form~$ρM_{α_1^{↑}(ρ)A_0}$ (resp.~$ρM_{α_1^{↓}(ρ)A_0}$) with~$A_0$ in~$SO_3(ℝ)$ (see Theorem~\ref{thm-conv-steady}) correspond to the critical points of the form~$α_1^{↑}(ρ)A_0$ (resp.~$α_1^{↓}(ρ)A_0$) of~$V$. Their nature can be reduced to the study of the critical point~$D_∞^{↑}=α_1^{↑}(ρ)I_3$  (resp.~$D_∞^{↓}=α_1^{↓}(ρ)I_3$) of~$\overline{V}$.
  \begin{itemize}
  \item If~$ρ>ρ^*$, the Hessian~$\mathrm{Hess}\,\overline{V}(D_∞^{↑})$ has signature~$(+++)$ (and so~$α_1^{↑}(ρ)A_0$ is a local minimizer of~$V$). Therefore the steady-states of the form~$ρM_{α_1^{↑}(ρ)A_0}$ are locally asymptotically stable (with exponential rate of convergence). 
  \item If~$ρ^*<ρ<ρ_c$ (resp.~$ρ>ρ_c$), the Hessian~$\mathrm{Hess}\,\overline{V}(D_∞^{↓})$ has signature~$(-++)$ (resp.~$(+--)$) (therefore~$α_1^{↓}(ρ)A_0$ is not a local minimizer of~$V$), and the steady-states of the form~$ρM_{α_1^{↑}(ρ)A_0}$ are unstable.
  \end{itemize}
  When~$ρ>ρ_c$, the steady-states of the form~$ρM_{α_2(ρ)\sqrt{3}\mathbf{a}_0⊗\mathbf{b}_0}$  with~$\mathbf{a}_0,\mathbf{b}_0∈𝕊^2$ (see Theorem~\ref{thm-conv-steady}) correspond to the critical points of the form~$α_2(ρ)\sqrt{3}\mathbf{a}_0⊗\mathbf{b}_0$ of~$V$, which reduces to the study of the critical point~$D_∞=α_2(ρ)\mathrm{diag}(1,0,0)$ of~$\overline{V}$. \begin{itemize}
    \item The Hessian~$\mathrm{Hess}\,\overline{V}(D_∞)$ has signature~$(++-)$ (therefore~$α_2(ρ)\sqrt{3}\mathbf{a}_0⊗\mathbf{b}_0$ is not a local minimizer of~$V$), and the steady-states of the form~$ρM_{α_2(ρ)\sqrt{3}\mathbf{a}_0⊗\mathbf{b}_0}$ are unstable.
    \end{itemize}
Furthermore, the critical cases are unstable: the uniform steady-state is unstable for~$ρ=ρ_c$, and the steady-states of the form~$ρM_{α^*A_0}$ are unstable when~$ρ=ρ^*$ (the corresponding matrices~$J=0$ or~$J=α^*A_0$ are not local minimizers of~$V$).
\end{proposition}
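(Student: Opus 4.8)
The plan is to translate every stability claim into a sign condition on $\mathrm{Hess}\,V$ at the corresponding critical point. Equation~\eqref{ODEJBGK} is the gradient flow $\tfrac{\d}{\d t}J=-\nabla V(J)$ on $M_3(\mathbb{R})$, and through the Duhamel formula contained in the linear‑in‑$f$ right‑hand side of~\eqref{BGKJf} the full solution $f(t,\cdot)$ inherits the exponential convergence, resp.\ the instability, of the finite‑dimensional trajectory $J(t)=\mathcal{J}[f(t,\cdot)]$. So, by standard results on gradient flows near a possibly non‑isolated critical point, it suffices to show: $\mathrm{Hess}\,V(J_\infty)$ positive definite transversally to the critical set through $J_\infty$ (which yields local asymptotic stability with exponential rate), versus $\mathrm{Hess}\,V(J_\infty)$ having a negative eigenvalue (which yields instability). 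Differentiating~\eqref{gradV} gives, for $K,L\in M_3(\mathbb{R})$,
\begin{equation*}
  \mathrm{Hess}\,V(J)[K,L]=K\cdot L-\rho\,\mathrm{Cov}_{M_J}\!\big(K\cdot A,\,L\cdot A\big),
\end{equation*}
that is, $\mathrm{Hess}\,V(J)=\mathrm{Id}-\rho\,\Sigma_J$ with $\Sigma_J$ the covariance operator of $A\mapsto A$ under $M_J$.

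The second step uses the equivariance $V(PJQ)=V(J)$ under the two‑sided action $J\mapsto PJQ$ of $SO_3(\mathbb{R})\times SO_3(\mathbb{R})$, hence $\nabla V(PJQ)=P\,\nabla V(J)\,Q$: the critical set is a union of such orbits, $\mathrm{Hess}\,V$ annihilates orbit directions (which are off‑diagonal in an SSVD basis, hence transverse to the diagonal matrices), and along the reduction $J=PDQ$ of Proposition~\ref{ssvd} the diagonal block of $\mathrm{Hess}\,V(PD_\infty Q)$ is conjugate to $\mathrm{Hess}\,\overline V(D_\infty)$. For $J_\infty=0$ the orbit is a single point, so the whole $9$‑dimensional Hessian is at stake, but $M_0$ is uniform and the identity~\eqref{JAA} gives $\Sigma_0=\tfrac16\mathrm{Id}$, whence $\mathrm{Hess}\,V(0)=\big(1-\tfrac{\rho}{6}\big)\mathrm{Id}$: positive definite for $\rho<\rho_c=6$, negative definite for $\rho>\rho_c$. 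For $J_\infty=\alpha I_3$ (all singular values equal) the orbit $\{\alpha A_0\}$ has the antisymmetric matrices as tangent space, and $M_{\alpha I_3}$ is invariant under conjugation $A\mapsto PAP^\top$, so $\Sigma_{\alpha I_3}$ is equivariant under $K\mapsto PKP^\top$ and, by Schur's lemma, a scalar on each irreducible piece $\langle I_3\rangle$, trace‑free symmetric, antisymmetric (the last contributing $0$, consistently); since the trace‑free \emph{diagonal} directions lie in the trace‑free symmetric irrep, positive‑definiteness of $\mathrm{Hess}\,\overline V(\alpha I_3)$ propagates to the whole symmetric part, so the reduction to $\overline V$ is legitimate for the $\alpha_1^{\uparrow},\alpha_1^{\downarrow}$ families. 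For $J_\infty$ in the $\alpha_2$ family (one simple singular value, a double one equal to $0$) the orbit is only $4$‑dimensional and two directions escape both orbit and diagonal; but there the conclusion is instability, for which it is enough that $\mathrm{Hess}\,\overline V$ already has a negative direction — which signature $(++-)$ supplies.

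The core computation is then the $3\times3$ matrix $\mathrm{Hess}\,\overline V(D)_{ij}=\tfrac12\big(\delta_{ij}-\tfrac{\rho}{2}\,\mathrm{Cov}_{M_D}(A_{ii},A_{jj})\big)$ at the diagonal critical points. At $D=\alpha I_3$, Rodrigues' formula~\eqref{eqRodrigues} gives $A_{ii}=\cos\theta+(1-\cos\theta)n_i^2$, and $M_{\alpha I_3}$ factorises as (the law of $\theta$ with density $\propto\sin^2(\tfrac{\theta}{2})e^{\alpha\cos\theta}$) times (the uniform law of $\mathbf{n}$ on the unit sphere of $\mathbb{R}^3$); with $\langle n_i^2\rangle=\tfrac13$, $\langle n_i^4\rangle=\tfrac15$, $\langle n_i^2n_j^2\rangle=\tfrac1{15}$ all covariances reduce to one‑dimensional integrals in $\cos\theta$, and the permutation symmetry forces $\mathrm{Hess}\,\overline V(\alpha I_3)$ to have eigenvalues $\lambda_\parallel$ (eigenvector $I_3$) and a double eigenvalue $\lambda_\perp$ (trace‑free). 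A short bookkeeping gives $\lambda_\parallel=\tfrac12\big(1-\rho\,c_1'(\alpha)\big)$, whose sign is read off the monotonicity of $\rho_1=\alpha/c_1$ (from $\rho_1'(\alpha)=\big(1-\rho\,c_1'(\alpha)\big)/c_1(\alpha)$ at $\alpha=\rho\,c_1(\alpha)$, together with the sign of $c_1$): positive at $\alpha_1^{\uparrow}$, negative at $\alpha_1^{\downarrow}$ for $\rho<\rho_c$, positive at $\alpha_1^{\downarrow}$ for $\rho>\rho_c$, which matches the $\lambda_\parallel$‑entry in the asserted signatures $(+++)$, $(-++)$, $(+--)$. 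Likewise $\lambda_\perp=\tfrac12\big(1-\tfrac{\rho}{15}\langle(1-\cos\theta)^2\rangle_{M_{\alpha I_3}}\big)$, and the analogous but heavier computation in the $\alpha_2$ family is best carried out in the quaternionic $Q$‑tensor picture of Section~\ref{sec-polymers}, where it becomes the Hessian of the four‑dimensional Maier–Saupe energy.

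The genuinely hard part is to show that $\lambda_\perp$, resp.\ the three eigenvalues of $\mathrm{Hess}\,\overline V$ in the $\alpha_2$ family, have the signs claimed: this is no soft argument, it rests on quantitative control of $c_1,c_2$ and of the moments of the weighted measures obtained via the analysis of the compatibility equation in~\cite{wang2008unified,degond2020phase}, and it is precisely this content that I would import. Finally, at the critical parameters $\mathrm{Hess}\,\overline V$ degenerates and instability must be read off the cubic term: for $J=0$ at $\rho=\rho_c$ the whole Hessian of $V$ vanishes and the leading term of $V$ near $0$ is the cubic $-\langle(J\cdot A)^3\rangle_{\mathrm{unif}}$, which by $SO_3\times SO_3$‑invariance is proportional to $\det J$ (with a nonzero constant, as one checks on $J=I_3$) and hence changes sign arbitrarily close to $0$; for $J=\alpha^*A_0$ at $\rho=\rho^*$ exactly the $I_3$‑direction is flat and $\beta\mapsto\overline V(\beta I_3)$ has an inflection at $\beta=\alpha^*$, since $\overline V'''$ there is proportional to $-c_1''(\alpha^*)\neq 0$ (the minimum of $\rho_1$ at $\alpha^*$ being non‑degenerate). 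In both cases the critical point is a saddle, hence unstable.
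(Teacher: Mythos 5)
The paper gives no proof of this proposition; it is explicitly introduced as ``a summary of these results'' obtained in \cite{degond2020phase}, so there is no paper argument to compare against line by line. What you have done is reconstruct the structure of that argument, and the reconstruction is sound where it is complete. In particular: the identification of the ODE~\eqref{ODEJBGK} as a gradient flow, the formula $\mathrm{Hess}\,V(J)=\mathrm{Id}-\rho\,\Sigma_J$ with $\Sigma_J$ the covariance operator (this is exactly~\eqref{HessVHH}), the computation $\mathrm{Hess}\,V(0)=(1-\rho/6)\,\mathrm{Id}$ via~\eqref{JAA}, the equivariance $V(PJQ)=V(J)$ and its consequences for critical orbits, and the Schur-lemma decomposition $M_3(\mathbb{R})=\langle I_3\rangle\oplus\mathrm{Sym}_0\oplus\Lambda^2$ at $J=\alpha I_3$ (which shows the two trace-free diagonal eigenvalues must be equal and that the $\Lambda^2$-block is the orbit kernel, so that $\mathrm{Hess}\,\overline V(\alpha I_3)$ really determines the full signature) — all of this is correct and is an efficient way to organize the proposition. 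The relation $\lambda_{\parallel}\propto 1-\rho c_1'(\alpha)=\rho_1'(\alpha)\,c_1(\alpha)$ and the resulting sign bookkeeping at $\alpha_1^{\uparrow}$, $\alpha_1^{\downarrow}$ also check out.

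Two places deserve a caveat. First, as you say yourself, the sign of $\lambda_{\perp}$ and the three eigenvalues in the $\alpha_2$ family are not soft consequences of anything above: you import them from \cite{wang2008unified,degond2020phase}. That is legitimate given the paper does the same, but it means the proposal is an organized outline rather than a proof. Second, your treatment of the degenerate cases differs from the paper's and has one unjustified assertion. For $\rho=\rho_c$ the cubic argument is fine (the invariant-theory step that the cubic is a multiple of $\det J$ and the check on $J=I_3$ both hold). For $\rho=\rho^*$, however, you rely on $c_1''(\alpha^*)\neq 0$, equivalently on the non-degeneracy of the minimum of $\rho_1$ at $\alpha^*$. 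That is not among the qualitative properties of $\rho_1$ stated in the paper (monotone decreasing then increasing does not preclude a flat minimum), and you do not derive it. The paper avoids this entirely: in the proof of Theorem~\ref{thm-stabilityFP} the two critical parameter values are handled by a mountain-pass argument, which uses only the non-degeneracy of the \emph{other} minimizer together with the finiteness of the set of equilibria, and never needs a third-order expansion. If you want to keep the cubic route for $\rho=\rho^*$ you would need to verify the nonvanishing of $c_1''(\alpha^*)$; otherwise replacing that step by the mountain-pass argument would close the gap.
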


The main object of this section is to show, as it was claimed in Remark~$5.5$ of~\cite{degond2020phase}, that we can directly use these results of (in)stability for the BGK equation (and more precisely for the potential~$V$) to obtain (in)stability results for the Fokker--Planck equation, in order to complete the results around the uniform distribution given by Proposition~\ref{prop-stability-uniform}. We provide a proposition and a theorem which give details on this statement.

The first proposition allows to compare the behaviours of~$V$ and of~$J↦\mathcal{F}[ρM_J]$. 
\begin{proposition}\label{prop-W}
  Let us define for~$J∈M_3(ℝ)$
\begin{equation*}
  W(J)=\mathcal{F}[ρM_J],\label{def-W}
\end{equation*}
Then, we have that~$∇W(J)=0$ if and only if~$∇V(J)=0$, that is to say~$J$ is a solution to the compatibility equation~\eqref{compatJ}. Furthermore, if~$J$ is such a critical point, the Hessian~$\mathrm{Hess}\,W$ has the same signature as~$\mathrm{Hess}\,V$ (and more precisely, if~$\overline{W}$ is the restriction of~$W$ to the diagonal matrices, then~$\mathrm{Hess}\,\overline{W}$ and~$\mathrm{Hess}\,\overline{V}$ have the same signature).
\end{proposition}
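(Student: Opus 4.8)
The plan is to first derive a closed-form relation between $W$ and $V$, and then differentiate it twice. Expanding the entropy part of $\mathcal{F}$ at $f=\rho M_J$, using $\ln M_J(A)=J\cdot A-\ln\mathcal{Z}(J)$, $\mathcal{J}[\rho M_J]=\rho\,\mathcal{J}[M_J]$ and the identity $\nabla\ln\mathcal{Z}(J)=\mathcal{J}[M_J]$ (which also lies behind \eqref{gradV}), a direct computation gives
\begin{equation*}
  W(J)=\rho\ln\rho+\rho\big(J\cdot\mathcal{J}[M_J]-\ln\mathcal{Z}(J)\big)-\tfrac{\rho^2}{2}\|\mathcal{J}[M_J]\|^2 .
\end{equation*}
Substituting $-\rho\ln\mathcal{Z}(J)=V(J)-\tfrac12\|J\|^2$ from \eqref{def-V} and completing the square in the remaining quadratic terms, this collapses, with $\nabla V(J)=J-\rho\,\mathcal{J}[M_J]$, to the key identity
\begin{equation*}
  W(J)=\rho\ln\rho+V(J)-\tfrac12\|\nabla V(J)\|^2 .
\end{equation*}

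Next I would obtain the first claim by a single differentiation. Since the gradient of $J\mapsto\tfrac12\|\nabla V(J)\|^2$ is $\mathrm{Hess}\,V(J)\cdot\nabla V(J)$, we get $\nabla W(J)=(\Id-\mathrm{Hess}\,V(J))\,\nabla V(J)$. Now $\mathrm{Hess}\,V(J)=\Id-\rho\,C(J)$, where $C(J)$ is the Hessian of $\ln\mathcal{Z}$ at $J$, i.e. the covariance operator of $A$ under $M_J$: the symmetric bilinear form $(K,L)\mapsto\int_{SO_3(\mathbb{R})}(K\cdot A)(L\cdot A)\,M_J(A)\,\d A-(K\cdot\mathcal{J}[M_J])(L\cdot\mathcal{J}[M_J])$. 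This operator is positive \emph{definite}: a matrix $K$ in its kernel would make $A\mapsto K\cdot A$ constant $M_J$-almost everywhere, hence constant on all of $SO_3(\mathbb{R})$ since $M_J>0$, which forces $K=0$ because $SO_3(\mathbb{R})$ spans $M_3(\mathbb{R})$. Hence $\Id-\mathrm{Hess}\,V(J)=\rho\,C(J)$ is invertible, so $\nabla W(J)=0$ if and only if $\nabla V(J)=0$, i.e. $J$ solves the compatibility equation~\eqref{compatJ}.

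For the Hessian, I would differentiate once more at a critical point $J_0$, where $\nabla V(J_0)=0$. The term produced by differentiating $C$ then drops, leaving $\mathrm{Hess}\,W(J_0)=\mathrm{Hess}\,V(J_0)-(\mathrm{Hess}\,V(J_0))^2$. Since $\mathrm{Hess}\,V(J_0)=\Id-\rho\,C(J_0)$ with $C(J_0)$ positive definite, every eigenvalue $h$ of the self-adjoint operator $\mathrm{Hess}\,V(J_0)$ is $<1$, and the corresponding eigenvalue $h(1-h)$ of $\mathrm{Hess}\,W(J_0)$ has the same sign as $h$; hence the two Hessians have the same signature. For the refined statement about diagonal matrices, observe that when $J_0$ is diagonal, $M_{J_0}$ is invariant under every map $A\mapsto SAS$ with $S=\mathrm{diag}(\pm1,\pm1,\pm1)$ (because $J_0\cdot(SAS)=J_0\cdot A$), which forces $\mathcal{J}[M_{J_0}]$ to be diagonal and $C(J_0)$ to preserve the subspace $\mathcal{D}$ of diagonal matrices; as $\mathrm{Hess}\,\overline{V}(J_0)$ and $\mathrm{Hess}\,\overline{W}(J_0)$ are the restrictions to $\mathcal{D}$ of $\mathrm{Hess}\,V(J_0)$ and $\mathrm{Hess}\,W(J_0)$, and $C(J_0)|_{\mathcal{D}}$ is still positive definite, the same eigenvalue argument gives $\mathrm{Hess}\,\overline{W}(J_0)$ and $\mathrm{Hess}\,\overline{V}(J_0)$ the same signature.

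No step is genuinely hard. The only points requiring care are the bookkeeping in the closed-form identity for $W$ (carried out with the explicit generalized von Mises density) and the structural observations that $C(J)$ is positive definite and commutes with $\mathrm{Hess}\,V(J)$ — this commutation is exactly what makes the signature comparison legitimate, both on $M_3(\mathbb{R})$ and on the subspace of diagonal matrices.
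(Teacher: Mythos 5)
Your proof is correct and follows essentially the same route as the paper: you derive the identity $W(J)=\rho\ln\rho+V(J)-\tfrac12\|\nabla V(J)\|^2$, deduce $\nabla W=(\Id-\mathrm{Hess}\,V)\nabla V$, show $\Id-\mathrm{Hess}\,V$ is invertible, and at a critical point use $\mathrm{Hess}\,W=\mathrm{Hess}\,V-(\mathrm{Hess}\,V)^2$ together with the sign-preserving map $\lambda\mapsto\lambda(1-\lambda)$ (for $\lambda<1$). You are in fact slightly more explicit than the paper on two points it leaves implicit: the strict inequality $\mathrm{Hess}\,V<\Id$ via positive definiteness of the covariance operator $C(J)$, and the observation (through the sign-flip symmetry $A\mapsto SAS$) that $\mathrm{Hess}\,V(J_0)$ preserves the diagonal subspace, which is exactly what is needed to pass the identity for $\mathrm{Hess}\,W$ to the restrictions $\overline{W}$ and $\overline{V}$.
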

\begin{proof}
  We first compute
  \begin{align*}
W(J)&=∫ρ (\ln ρ + A·J - \ln \mathcal{Z}(J))M_J(A)\d A - \frac{ρ^2}2∥\mathcal{J}[M_{J}]∥^2\nonumber\\
      &=ρ\ln ρ - \ln \mathcal{Z}(J) + \frac12∥J∥^2 - \frac12∥J-ρ\mathcal{J}[M_{J}]∥^2\nonumber\\
  &=V(J)-\frac12∥∇V(J)∥^2 + ρ\ln ρ\label{W-V},
\end{align*}
thanks to~\eqref{gradV}. Therefore we obtain
\begin{equation}\label{gradW}
  ∇W(J)=∇V(J)-\mathrm{Hess}\,V(J)(∇V(J)).
\end{equation}
We want to compute the Hessian of~$V$, seen as a linear mapping from~$M_3(ℝ)$ to~$M_3(ℝ)$, symmetric with respect to the inner product of~$M_3(ℝ)$. Let us take~$H$ small in~$M_3(ℝ)$. We first have that~$\mathcal{Z}(J+H)=(1+\mathcal{J}[M_J]·H)\mathcal{Z}(J)+O(∥H∥^2)$. Thus we get~$M_{J+H}(A)=(1+A·H-\mathcal{J}[M_J]·H)M_J(A)+O(∥H∥^2)$. Finally we obtain
\[\mathcal{J}[M_{J+H}]=\mathcal{J}[M_J]-(\mathcal{J}[M_J]·H)\mathcal{J}[M_J] + ∫_{SO_3(ℝ)}A (A·H)M_J(A)\d A + O(∥H∥^2).\]
Now, using the expression~\eqref{gradV} of~$∇V$, we get
\begin{equation*}
  \mathrm{Hess}\, V(J)(H)=H-ρ\Big[(\mathcal{J}[M_J]·H)\mathcal{J}[M_J]-∫_{SO_3(ℝ)}A (A·H)M_J(A)\d A\Big].\label{HessVH}
\end{equation*}
Said differently, seeing now~$\mathrm{Hess}\,V$ as a symmetric bilinear form on~$M_3(ℝ)$:
\begin{align}
  \mathrm{Hess}\, V(J)(H,H)&=∥H∥^2-ρ\Big[(\mathcal{J}[M_J]·H)^2-∫_{SO_3(ℝ)} (A·H)^2M_J(A)\d A\Big]\nonumber\\
  &=∥H∥^2-ρ∫[(A-\mathcal{J}[M_J])·H]^2M_J(A)\d A\label{HessVHH},
\end{align}
and we see that all the eigenvalues of~$\mathrm{Hess}\,V$ are strictly less than~$1$. Therefore the (symmetric) linear mapping~$\mathrm{Id}-\mathrm{Hess}\,V$ from~$M_3(ℝ)$ to~$M_3(ℝ)$ has only strictly positive eigenvalues, and is therefore an isomorphism. 
The expression~\eqref{gradW} of~$∇W$ then provides the equivalence between critical points for~$V$ and for~$W$.

Finally, at a point~$J$ for which~$∇V(J)=0$, we obtain
\begin{equation*}\label{HessWeq}
  \mathrm{Hess}\, W(J)=\mathrm{Hess}\, V(J)-[\mathrm{Hess}\, V(J)]^2.
\end{equation*}
Therefore, the eigenvalues of~$\mathrm{Hess}\, W(J)$ are given by~$λ(1-λ)$, where~$λ$ are the eigenvalues of~$\mathrm{Hess}\, W(J)$, which all satisfy~$λ<1$. Therefore their signs are the same. And this is also true when restricted to the space of diagonal matrices.
\end{proof}

We can now state the final theorem of this section.
\begin{theorem}\label{thm-stabilityFP} The nature of all the critical points of the free energy~$\mathcal{F}$ is given by the following statements.
  \begin{itemize}
    \item For~$ρ<ρ_c$, the uniform equilibrium of mass~$ρ$ is a local strict minimizer of the free energy~$\mathcal{F}$.
    \item For~$ρ>ρ^*$, the set~$\mathcal{E}=\{ρM_{α_1^{↑}(ρ)A_0},A_0∈SO_3(ℝ)\}$ is a local strict minimizer of the free energy~$\mathcal{F}$, in the sense that there exists a neighborhood~$\mathcal{V}$ of~$\mathcal{E}$ (in the space of nonnegative measures of mass~$ρ$) such that if~$f∈\mathcal{V}\setminus\mathcal{E}$, then~$\mathcal{F}[f]>\mathcal{F}_∞$, where~$\mathcal{F}_∞$ is the common value of~$\mathcal{F}$ on~$\mathcal{E}$.
    \item For~$ρ⩾ρ_c$, the uniform equilibrium of mass~$ρ$ is not a local minimizer of the free energy~$\mathcal{F}$.
    \item For~$ρ⩾ρ^*$ (and~$ρ≠ρ_c$), any steady-state of the form~$ρM_{α_1^{↓}(ρ)A_0}$ for~$A_0∈SO_3(ℝ)$ is not a local minimizer of the free energy~$\mathcal{F}$.
    \item For~$ρ>ρ_c$, any steady-state of the form~$ρM_{α_2(ρ)\sqrt3\mathbf{a}_0⊗\mathbf{b}_0}$ for~$A_0∈SO_3(ℝ)$ is not a local minimizer of the free energy~$\mathcal{F}$.
    \end{itemize}
    Therefore, the last three families of steady-states are unstable for the Fokker--Planck equation~\eqref{FPJf}: there exist initial conditions arbitrarily close to these families (in any~$H^s$ norm), such that the solution to the Fokker--Planck equation converges in long time towards another family of equilibria (see Theorem~\ref{thm-conv-steady}).
  \end{theorem}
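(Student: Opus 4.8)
The plan is to transport the finite-dimensional information about the potential $V$ on $M_3(\mathbb{R})$ — summarized in Proposition~\ref{prop-stability-BGK} — to the free energy $\mathcal{F}$, using Proposition~\ref{prop-W} as a dictionary: $W(J)=\mathcal{F}[\rho M_J]=V(J)-\tfrac12\|\nabla V(J)\|^2+\rho\ln\rho$, and at a critical point $\mathrm{Hess}\,W$ has the same signature as $\mathrm{Hess}\,V$ (likewise for the restrictions to diagonal matrices). The first item, concerning the uniform equilibrium for $\rho<\rho_c$, is exactly Proposition~\ref{prop-stability-uniform} and requires nothing further.

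For the three ``not a local minimizer'' items (the uniform equilibrium for $\rho\geq\rho_c$; the family $\rho M_{\alpha_1^{\downarrow}(\rho)A_0}$ for $\rho\geq\rho^*$, $\rho\neq\rho_c$; and the family $\rho M_{\alpha_2(\rho)\sqrt3\,\mathbf{a}_0\otimes\mathbf{b}_0}$ for $\rho>\rho_c$), I would argue in a uniform way, including the critical cases $\rho=\rho_c$ and $\rho=\rho^*$. In each case Proposition~\ref{prop-stability-BGK}, i.e.\ \cite{degond2020phase}, tells us that the relevant matrix $J^*$ (one of $0$, $\alpha_1^{\downarrow}(\rho)A_0$, $\alpha^*A_0$ when $\rho=\rho^*$, or $\alpha_2(\rho)\sqrt3\,\mathbf{a}_0\otimes\mathbf{b}_0$) is \emph{not} a local minimizer of $V$. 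The critical set of $V$ is the solution set of the compatibility equation~\eqref{compatJ}, which by Proposition~\ref{prop-solutions-compat} is a finite union of submanifolds of $M_3(\mathbb{R})$ of dimension at most~$6$, hence has empty interior; so one can choose $J_n\to J^*$ with $V(J_n)<V(J^*)$ and $\nabla V(J_n)\neq0$. Since $\nabla V(J^*)=0$,
\[\mathcal{F}[\rho M_{J_n}]=W(J_n)=V(J_n)-\tfrac12\|\nabla V(J_n)\|^2+\rho\ln\rho<V(J^*)+\rho\ln\rho=W(J^*)=\mathcal{F}[\rho M_{J^*}],\]
and $\rho M_{J_n}\to\rho M_{J^*}$ in every $H^s(SO_3(\mathbb{R}))$ because $J\mapsto M_J$ is smooth with values in each $H^s$; hence $\rho M_{J^*}$ is not a local minimizer of $\mathcal{F}$.

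The heart of the proof is the remaining item: that $\mathcal{E}=\{\rho M_{\alpha_1^{\uparrow}(\rho)A_0}:A_0\in SO_3(\mathbb{R})\}$ is a local strict minimizer when $\rho>\rho^*$. The key inequality is that for every nonnegative $f$ of mass $\rho$ one has $\mathcal{F}[f]\geq W(\Lambda^*)$, where $\Lambda^*=\Lambda^*(\mathcal{J}[f])$ solves $\rho\mathcal{J}[M_{\Lambda^*}]=\mathcal{J}[f]$, with equality iff $f=\rho M_{\Lambda^*}$: this follows from the Gibbs inequality $\int(f/\rho)\ln\!\big((f/\rho)/M_\Lambda\big)\,\d A\geq0$, which reads $\mathcal{F}[f]\geq\rho\ln\rho+\mathcal{J}[f]\cdot\Lambda-\rho\ln\mathcal{Z}(\Lambda)-\tfrac12\|\mathcal{J}[f]\|^2$, upon substituting $\Lambda=\Lambda^*$ and using $\nabla V(\Lambda^*)=\Lambda^*-\mathcal{J}[f]$ to recognize the right-hand side as $W(\Lambda^*)$. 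The map $\Lambda\mapsto\rho\mathcal{J}[M_\Lambda]$ has derivative $\mathrm{Id}-\mathrm{Hess}\,V(\Lambda)$, which is invertible since by~\eqref{HessVHH} all eigenvalues of $\mathrm{Hess}\,V$ are $<1$, and it fixes the orbit $\mathcal{O}=\{\alpha_1^{\uparrow}(\rho)A_0\}$ pointwise (because $\alpha_1^{\uparrow}(\rho)$ solves~\eqref{compat1}, so $\rho\mathcal{J}[M_{\alpha_1^{\uparrow}(\rho)A_0}]=\rho c_1(\alpha_1^{\uparrow}(\rho))A_0=\alpha_1^{\uparrow}(\rho)A_0$); hence $\Lambda^*$ is well defined, smooth, and close to $\mathcal{O}$ whenever $\mathcal{J}[f]$ is. It then remains to study $W$ near $\mathcal{O}$: by Proposition~\ref{prop-W} and Proposition~\ref{prop-stability-BGK}, at a point of $\mathcal{O}$ the Hessian of $W$ is positive semidefinite with kernel exactly the tangent space of $\mathcal{O}$ (the antisymmetric matrices, along which $W$ is constant by the left–right $SO_3(\mathbb{R})$-invariance of $W$), so $\mathcal{O}$ is a compact nondegenerate critical manifold of $W$ and there is a neighbourhood of it on which $W\geq W(J_\infty)=\mathcal{F}_\infty$ with equality only on $\mathcal{O}$. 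Taking $\mathcal{V}=\{f:\mathcal{J}[f]\text{ lies in that neighbourhood}\}$ (a neighbourhood of $\mathcal{E}$ since $\mathcal{J}[\mathcal{E}]=\mathcal{O}$ and $\mathcal{J}$ is continuous) and chaining the two bounds gives $\mathcal{F}[f]\geq\mathcal{F}_\infty$ on $\mathcal{V}$, and equality forces both $\Lambda^*\in\mathcal{O}$ and $f=\rho M_{\Lambda^*}$, that is $f\in\mathcal{E}$; hence $\mathcal{F}[f]>\mathcal{F}_\infty$ on $\mathcal{V}\setminus\mathcal{E}$.

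Finally, the dynamical instability of the last three families follows from the dissipation relation~\eqref{dissipation} together with Proposition~\ref{prop-lasalle}: arbitrarily close to such an equilibrium $f_\infty$ there are initial data $f_0$ with $\mathcal{F}[f_0]<\mathcal{F}[f_\infty]$, the corresponding solution keeps $\mathcal{F}[f(t,\cdot)]\leq\mathcal{F}[f_0]$ and, by Proposition~\ref{prop-lasalle}, converges in every $H^s$ to a set of equilibria with free energy $\mathcal{F}_\infty\leq\mathcal{F}[f_0]<\mathcal{F}[f_\infty]$, which by continuity of $\mathcal{F}$ is bounded away from the family of $f_\infty$; so the solution leaves a fixed neighbourhood of that family. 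I expect the two delicate points to be: justifying that the constrained entropy minimization behind $\mathcal{F}[f]\geq W(\Lambda^*(\mathcal{J}[f]))$ is exactly captured by the finite-dimensional function $W$ — in particular the local invertibility of $\Lambda\mapsto\rho\mathcal{J}[M_\Lambda]$ uniformly near the whole orbit $\mathcal{O}$ — and turning the degenerate Hessian of $W$ along $\mathcal{O}$ into the genuinely local, strict statement, which needs a Morse--Bott / tubular-neighbourhood argument uniform over the compact manifold $\mathcal{E}$ and a careful handling of the two equality cases.
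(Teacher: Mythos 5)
Your proposal is correct, but for the two substantive items it takes genuinely different routes from the paper's proof, which are worth contrasting.

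For the three ``not a local minimizer'' statements, the paper splits into two cases: when $\rho\neq\rho_c,\rho^*$, it uses the fact (via Propositions~\ref{prop-stability-BGK} and~\ref{prop-W}) that $\mathrm{Hess}\,\overline{W}$ has a negative eigenvalue; for the degenerate cases $\rho=\rho_c$ and $\rho=\rho^*$ it resorts to a mountain-pass argument, using the other known strict minimizer as a second endpoint and the finite list of critical points to get a contradiction. Your argument handles all cases uniformly and more economically: since $W(J)=V(J)-\tfrac12\|\nabla V(J)\|^2+\rho\ln\rho\leq V(J)+\rho\ln\rho$ for every $J$, with equality at critical points, the single fact that $J^*$ is not a local minimizer of $V$ (which Proposition~\ref{prop-stability-BGK} records even in the critical cases) immediately yields $J_n\to J^*$ with $W(J_n)<W(J^*)$. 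This dispenses with the mountain-pass lemma entirely. Note the side condition $\nabla V(J_n)\neq 0$ and the empty-interior remark are superfluous: $V(J_n)<V(J^*)$ and $\nabla V(J^*)=0$ already give $W(J_n)\leq V(J_n)+\rho\ln\rho<V(J^*)+\rho\ln\rho=W(J^*)$.

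For the local strict minimality of $\mathcal{E}$ when $\rho>\rho^*$, the paper argues \emph{dynamically} and by contradiction: if $f_0$ close to $\mathcal{E}$ had $\mathcal{F}[f_0]\leq\mathcal{F}_\infty$ without being in $\mathcal{E}$, the BGK flow starting from $f_0$ would strictly decrease $\mathcal{F}$ below $\mathcal{F}_\infty$ while (by the asymptotic stability of $\mathcal{E}$ for BGK, Proposition~\ref{prop-stability-BGK}) converging to $\mathcal{E}$ --- a contradiction. Your argument is \emph{variational}: the Gibbs inequality $\mathcal{H}(f|\rho M_\Lambda)\geq0$ plus the identity~\eqref{HFJV} gives $\mathcal{F}[f]\geq V(\Lambda)-\tfrac12\|\Lambda-\mathcal{J}[f]\|^2+\rho\ln\rho$, and your choice $\Lambda=\Lambda^*$ with $\rho\mathcal{J}[M_{\Lambda^*}]=\mathcal{J}[f]$ is exactly what turns the right-hand side into $W(\Lambda^*)$; then you reduce to a finite-dimensional Morse--Bott statement for $W$ near $\mathcal{O}$. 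This is a genuinely different and in some ways cleaner route (it is static, does not invoke BGK dynamics, and in fact anticipates the mechanism of the proof of Theorem~\ref{thm-expstability-FP}). What it costs you is the finite-dimensional piece: the assertion that $\mathrm{Hess}\,W$ on the full $9$-dimensional space $M_3(\mathbb{R})$ at $\alpha I_3$ is positive semidefinite with kernel exactly the antisymmetric matrices does \emph{not} follow directly from Propositions~\ref{prop-W} and~\ref{prop-stability-BGK}, which only give the $3\times3$ signature of $\mathrm{Hess}\,\overline{W}$ on diagonal matrices. You must either (i) observe that near any $J$, $W(J)=\overline{W}(D(J))$ where $D(J)$ is the continuous diagonal part of the SSVD, and that $D$ maps $\mathcal{O}$ to $\alpha I_3$, so strict nondegenerate minimality of $\overline{W}$ at $\alpha I_3$ transfers to minimality of $W$ on a neighbourhood of $\mathcal{O}$ with equality exactly on $\mathcal{O}$; or (ii) use that the full Hessian at $\alpha I_3$ is invariant under conjugation by the stabilizer $SO_3(\mathbb{R})$, decompose $M_3(\mathbb{R})$ into the irreducibles $\mathbb{R}I_3\oplus\{\text{antisym}\}\oplus\{\text{sym trace-free}\}$, and read off the scalar on the $5$-dimensional piece from its restriction to the $2$-dimensional diagonal trace-free part. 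You flag this as a delicate point, and you are right --- as written, the jump from the diagonal Hessian to the full one is a gap, albeit a fillable one. The instability conclusion via the dissipation relation and Proposition~\ref{prop-lasalle} is fine and matches the paper's intent.
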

  \begin{proof}The first point has been proven in Proposition~\ref{prop-stability-uniform}. For the second one, if it was not true, there would exist~$f_0$ as close as we want from~$\mathcal{E}$ such that~$\mathcal{F}(f_0)⩽\mathcal{F}_∞$, and~$f_0∉\mathcal{E}$. Since the different families of steady-states are isolated,~$f_0$ cannot be a steady-state. By letting~$f$ be the solution of the BGK equation with initial condition~$f_0$, we would have~$\widetilde{Q}[f_0]>0$ and therefore~$\mathcal{F}[f(t,·)]<\mathcal{F}_∞$ for all~$t>0$. Combined with the fact that~$\mathcal{F}[f(t,·)]$ is nonincreasing in time, this would be in contradiction with the fact that~$f(t,·)$ converges towards the set~$\mathcal{E}$, thanks to the asymptotic stability of those steady-states for the BGK equation given by Proposition~\ref{prop-stability-BGK}.
    Let us remark that the first point of the theorem could be proven in the same way, without having to expand the free energy, but only using the known results for the BGK equation and the fact that~$\mathcal{F}$ is nonincreasing.

    To prove the last three points, let us take such a steady state, of the form~$ρM_{J_0}$. We want to prove that~$J_0$ is not a local minimizer of~$W$, therefore~$ρM_{J_0}$ is not a local minimizer of~$\mathcal{F}$. We write a SSVD of the form~$J_0=PD_0Q$ where~$D_0$ is a diagonal matrix and~$P,Q∈SO_3(ℝ)$. If~$J=PDQ$ where~$D$ is a diagonal matrix close to~$D_0$, then~$W(J)=\overline{W}(D)$. Therefore we only need to prove that~$D_0$ is not a local minimizer of~$\overline{W}$.
    In the case where~$ρ≠ρ_c$ and~$ρ≠ρ^*$, since the signature of~$\mathrm{Hess}\,\overline{W}(D_0)$ has negative components (thanks to Propositions~\ref{prop-stability-BGK} and~\ref{prop-W}), we directly get the results.
    In the critical cases we will use a mountain-pass lemma argument. In the case where~$ρ=ρ_c$, suppose that~$0$ is a local minimizer of~$\overline{W}$. Then it is a local strict minimizer, since this critical point is isolated. Therefore by looking at the other local strict minimizer~$α_1^{↑}(ρ_c)I_3$ of~$\overline{W}$ (for which the signature of the Hessian is~$(+++)$, thanks again to Propositions~\ref{prop-stability-BGK} and~\ref{prop-W}), we would obtain, by the mountain-pass lemma, a third critical point~$D$ of~$\overline{W}$, which would satisfy~$\overline{W}(D)>\max(\overline{W}(0),\overline{W}(α_1^{↑}(ρ_c)I_3))$. This is in contradiction with the fact that we only have two families of equilibria for this value of~$ρ$.
    The same argument can be used to show that when~$ρ=ρ^*$, the point~$α^*I_3$ is not a local minimizer of~$\overline{W}$, using as other local strict minimizer the point~$0$.

    The conclusion of the statement of the theorem comes from the fact that we actually proved that the critical points were not local minimizers of~$W$, which is the evaluation of~$\mathcal{F}$ on smooth functions of the form~$ρM_J$, so the~$H^s$ norm of~$ρM_{J}-ρM_{J_0}$ is small when~$J$ is close to~$J_0$. 
  \end{proof}

  For the first two points of Theorem~\ref{thm-stabilityFP}, we did not provide the corresponding stability results. Indeed, in the next section, a more detailed study will show that they are exponentially stable.
  \section{Exponential convergence for the stable steady-states}
  \label{sec-exponential}
We will now show that the two families of steady-states that correspond to what we observe in the numerical simulations are locally exponentially attracting. In particular, when~$f$ is a solution to the Fokker--Planck equation in the neighborhood of those steady-states, we will show that~$\mathcal{J}[f(t,·)]$ will converge to a solution~$J_∞$ of the compatibility equation~\eqref{compatJ}. However, since this~$J_∞$ (if it is non-zero) is not known from the initial condition (contrary to the case of the BGK equation), it is not easy to control directly the distance between~$f$ and~$ρM_{J_∞}$, but we will see that controlling the distance from~$f$ and~$ρM_{\mathcal{J}[f]}$, even if this last one is not a steady-state, will be the key to our analysis. A convenient framework is to use the relative entropy, for which we will need the following results.

\begin{proposition}\label{prop-relative-entropy} Let~$ρ>0$. If~$f,g$ are two measurable nonnegative functions on~$SO_3(ℝ)$ with total mass~$ρ$ and with~$g>0$, we define the relative entropy and Fisher information by
\begin{equation*}
  \label{relativeEntropyFisher}
  \mathcal{H}(f|g)=∫_{SO_3(ℝ)}\!\!f(A)\ln\Big(\frac{f(A)}{g(A)}\Big)\d A, \quad \mathcal{I}(f|g)=∫_{SO_3(ℝ)}\!\!f(A)\Big{∥}∇\ln\Big(\frac{f(A)}{g(A)}\Big)\Big{∥}^2\d A.
\end{equation*}
Then, for two such functions, we have the Csiszár--Kullback--Pinsker inequality:
\begin{equation}
  \label{pinsker}
  ∫_{SO_3(ℝ)}|f(A)-g(A)|\,\d A⩽\sqrt{2ρ\, \mathcal{H}(f|g)}.
\end{equation}
Finally, we have the following families of (weighted) logarithmic Sobolev inequalities: there exists a constant~$λ>0$ such that for all~$J∈M_3(ℝ)$ with~$∥J∥⩽\frac{\sqrt{3}}{\sqrt2}ρ$, and all measurable nonnegative function~$f$ with total mass~$ρ$, we have
\begin{equation}\label{logsob}
  \mathcal{H}(f|ρM_J)⩽\frac1{2λ}\,\mathcal{I}(f|ρM_J).
\end{equation}
\end{proposition}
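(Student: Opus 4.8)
The plan is to dispatch the three assertions in turn, each time reducing to a statement about genuine probability measures by factoring out the common mass. Given a nonnegative $f$ of mass $ρ$, write $p=f/ρ$; then with $q=g/ρ$ one has $\mathcal{H}(f|g)=ρ\,\mathcal{H}(p|q)$ and $∫_{SO_3(ℝ)}|f-g|\,\d A=ρ∫_{SO_3(ℝ)}|p-q|\,\d A$, and since $\ln(f/g)=\ln(p/q)$ also $\mathcal{I}(f|ρM_J)=ρ\,\mathcal{I}(p|M_J)$. For the Csiszár--Kullback--Pinsker inequality~\eqref{pinsker} it therefore suffices to establish the classical Pinsker inequality $∫_{SO_3(ℝ)}|p-q|\,\d A⩽\sqrt{2\mathcal{H}(p|q)}$ for probability densities, since multiplying by $ρ$ turns the right-hand side into $ρ\sqrt{2\mathcal{H}(p|q)}=\sqrt{2ρ\,\mathcal{H}(f|g)}$. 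This is standard and may either be invoked directly or derived from the elementary pointwise bound $3(x-1)^2⩽(4+2x)(x\ln x-x+1)$ on $ℝ_+$ together with the Cauchy--Schwarz inequality.

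For the logarithmic Sobolev inequalities~\eqref{logsob}, the same reduction shows the claim is equivalent to the existence of $λ>0$ such that $\mathcal{H}(p|M_J)⩽\tfrac1{2λ}\mathcal{I}(p|M_J)$ for every probability density $p$, uniformly over $\|J\|⩽\tfrac{\sqrt3}{\sqrt2}ρ$; writing $p=h^2M_J$ this is precisely the log-Sobolev inequality $\mathrm{Ent}_{M_J}(h^2)⩽\tfrac2λ∫_{SO_3(ℝ)}\|∇h\|^2M_J\,\d A$ with constant $λ$. Now Haar measure on $SO_3(ℝ)$ — a compact Riemannian manifold which, with the bi-invariant metric~\eqref{dotSO3}, has strictly positive Ricci curvature (being a compact semisimple Lie group, isometric to a quotient of a round $3$-sphere) — satisfies a log-Sobolev inequality with some constant $λ_0>0$ by the Bakry--Émery criterion (or the Lichnerowicz-type estimate for positively curved manifolds). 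For $J$ in the prescribed ball, $M_J=\frac1{\mathcal{Z}(J)}e^{A·J}$ is a bounded perturbation of Haar measure: its log-density has oscillation $\mathrm{osc}_{A∈SO_3(ℝ)}(A·J)⩽2\sup_{A}|A·J|⩽2\,\|A\|\,\|J\|=2\cdot\tfrac{\sqrt3}{\sqrt2}\cdot\tfrac{\sqrt3}{\sqrt2}ρ=3ρ$, using $\|A\|^2=\tfrac12\mathrm{Tr}(AA^\top)=\tfrac32$ for $A∈SO_3(ℝ)$ and Cauchy--Schwarz. By the Holley--Stroock perturbation lemma, $M_J$ then satisfies a log-Sobolev inequality with constant at least $λ_0e^{-3ρ}$, a bound independent of $J$. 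Taking $λ=λ_0e^{-3ρ}$ and unwinding the reductions proves~\eqref{logsob}; the extension from smooth $f$ to arbitrary measurable nonnegative $f$ of mass $ρ$ is routine by truncation and mollification, and~\eqref{logsob} is vacuous when $\mathcal{I}(f|ρM_J)=+∞$.

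The only point carrying genuine content is the \emph{uniformity} of $λ$ over the whole range $\|J\|⩽\tfrac{\sqrt3}{\sqrt2}ρ$, rather than merely for small $\|J\|$ (where the Bakry--Émery criterion could be applied to $M_J$ directly). This is exactly what the stated constraint encodes: for any nonnegative $f$ of mass $ρ$ one has $\|\mathcal{J}[f]\|=\big\|∫_{SO_3(ℝ)}A\,f\,\d A\big\|⩽ρ\sup_{A∈SO_3(ℝ)}\|A\|=\tfrac{\sqrt3}{\sqrt2}ρ$, so only a fixed compact set of parameters $J$ ever arises, and on such a set the Holley--Stroock constant $λ_0e^{-\mathrm{osc}}$ stays bounded below. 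A softer argument using lower semicontinuity of the optimal log-Sobolev constant on this compact set would serve equally well, and the remaining ingredient — the log-Sobolev inequality for Haar measure on $SO_3(ℝ)$ — is classical, a consequence of the positivity of the curvature.
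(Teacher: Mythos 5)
Your proposal is correct and follows essentially the same route as the paper: classical Pinsker for the first inequality (tracking the mass factor $ρ$), Bakry--Émery on the positively curved manifold $SO_3(ℝ)$ for the $J=0$ case, and Holley--Stroock bounded perturbation with a uniform oscillation bound coming from the constraint $\|J\|\leqslant\tfrac{\sqrt3}{\sqrt2}ρ$. The only differences are cosmetic — you spell out the reduction to probability densities and make the oscillation bound $3ρ$ and the resulting constant $λ_0e^{-3ρ}$ explicit, whereas the paper simply notes that $M_J$ is bounded above and below uniformly in $J$.
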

\begin{proof} The Csiszár--Kullback--Pinsker inequality is well-known~\cite{csiszar1967information,pinsker1964information}, we just notice the factor~$ρ$ since we do not work with probability measures here. The logarithmic Sobolev inequality~\eqref{logsob} in the case~$J=0$ (uniform measure on~$SO_3(ℝ)$) come for instance from the Bakry--Émery criterion~\cite{bakry1985diffusions} since~$SO_3(ℝ)$ has positive Ricci curvature (this is the same as the curvature of~$𝕊^3$, thanks to the local isometry~$Φ$ given in Proposition~\ref{prop-quaternions-rotations})\footnote{Actually, as already stated by Bakry and Émery~\cite{bakry1985diffusions}, this criterion does not give the optimal constant in~$𝕊^3$, which was given by Mueller and Weissler in~\cite{mueller1982hypercontractivity}, but here even the optimal constant in~$𝕊^3$ would not be necessarily optimal in~$SO_3(ℝ)$, since we only want the logarithmic Sobolev inequality for even functions on~$𝕊^3$.}. Then, we use the fact that the logarithmic Sobolev inequality is stable by bounded perturbation~\cite{holley1987logarithmic,villani2003topics}. Since~$∥J∥⩽\frac{\sqrt{3}}{\sqrt2}ρ$, then~$M_J$ is bounded above and below, uniformly in~$J$, which ends the proof. 
\end{proof}

Let us now compute the relative entropy of~$f$ with respect to~$ρM_{J}$ for~$J∈M_3(ℝ)$. Using the definition~\eqref{defVM}, we obtain
\begin{align}
  \mathcal{H}(f|ρM_{J})&=∫_{SO_3(ℝ)}\big(f(A)\ln f(A)-f(A)\,A·J\big)\d A + ρ\ln\mathcal{Z}(J) - ρ\ln ρ\nonumber\\
  &=\mathcal{F}[f]+\frac12∥J-\mathcal{J}[f]∥^2-V(J)-ρ\ln ρ,\label{HFJV}
\end{align}
thanks to the definitions~\eqref{def-F} and~\eqref{def-V} of~$\mathcal{F}$ and~$V$. Therefore, if~$J_{\mathrm{eq}}$ is a solution to the compatibility equation and~$f_{\mathrm{eq}}=ρM_{J_{\mathrm{eq}}}$, we apply~\eqref{HFJV} with~$f=f_{\mathrm{eq}}$ and~$J=J_{\mathrm{eq}}$ to obtain~$ρ\ln ρ=\mathcal{F}[f_{\mathrm{eq}}]-V(J_{\mathrm{eq}})$. Now applying~\eqref{HFJV} with~$J=J_{\mathrm{eq}}$ or with~$J=\mathcal{J}[f]$, we obtain 
\begin{gather}\label{FHV}
  \mathcal{F}[f]-\mathcal{F}[f_{\mathrm{eq}}]=\mathcal{H}(f|ρM_{\mathcal{J}[f]})+V(\mathcal{J}[f])-V(J_{\mathrm{eq}}),\\
  \mathcal{H}(f|f_{\mathrm{eq}})=\mathcal{F}[f]-\mathcal{F}[f_{\mathrm{eq}}]+\frac12∥J_{\mathrm{eq}}-\mathcal{J}[f]∥^2.\label{HFJ}
\end{gather}
Furthermore, it is straightforward to see, thanks to the definition~\eqref{def-D} of~$\mathcal{D}[f]$, that
\begin{equation}\label{DI}
  \mathcal{D}[f]=\mathcal{I}(f|ρM_{\mathcal{J}[f]}).
\end{equation}
These links between the free energy, its dissipation, the relative entropy, the Fisher information, and the potential~$V$ associated to the BGK equation are the key points to prove the stability of the steady-states associated to solutions of the compatibility equation corresponding to local minimizers of~$V$.

\begin{theorem}\label{thm-expstability-FP}
  Let~$ρ>ρ^*$ (resp.~$ρ<ρ_c$).

  We define the set of equilibria~$\mathcal{E}_∞=\{ρM_{α^{↑}_1(ρ)A_0}, A_0∈SO_3(ℝ)\}$ (resp.~$\mathcal{E}_∞$ reduced to the uniform distribution on~$SO_3(ℝ)$ of mass~$ρ$).

  Then there exists~$δ>0$,~$\widetilde{λ}>0$ and~$C>0$ such that for all nonnegative measurable function~$f_0$ with mass~$ρ$, if there exists~$f_{\mathrm{eq},0}∈\mathcal{E}_∞$ such that~$\mathcal{H}(f_0|f_{\mathrm{eq},0})<δ$, then there exists~$f_∞∈\mathcal{E}_∞$ such that for all time~$t⩾0$, we have
  \begin{equation*}
    \mathcal{H}(f(t,·)|f_∞)⩽C\,e^{-2\widetilde{λ}t}\,\mathcal{H}(f_0|f_{\mathrm{eq},0}).
  \end{equation*}
\end{theorem}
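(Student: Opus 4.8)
The plan is to exploit the free-energy dissipation \eqref{dissipation} together with the identities \eqref{FHV}, \eqref{HFJ}, \eqref{DI} relating the free energy, its dissipation, the relative entropy and the finite-dimensional potential~$V$, the logarithmic Sobolev inequality \eqref{logsob} and the Csisz\'ar--Kullback--Pinsker inequality \eqref{pinsker}, and a \L{}ojasiewicz-type inequality for~$V$ near the relevant critical set; one then runs an orbital-stability bootstrap and identifies the limit. \emph{Step 1 (a priori differential inequality).} Write $\mathcal{E}_\infty^{J}$ for the set of $J\in M_3(\mathbb{R})$ with $\rho M_J\in\mathcal{E}_\infty$, so $\mathcal{E}_\infty^{J}=\{0\}$ for $\rho<\rho_c$ and $\mathcal{E}_\infty^{J}=\{\alpha_1^{\uparrow}(\rho)A_0 : A_0\in SO_3(\mathbb{R})\}$ for $\rho>\rho^*$, and let $\mathcal{F}_\infty$ be the common value of~$\mathcal{F}$ on $\mathcal{E}_\infty$, so $V(J_{\mathrm{eq}})=\mathcal{F}_\infty-\rho\ln\rho$ for $J_{\mathrm{eq}}\in\mathcal{E}_\infty^{J}$. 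Any nonnegative $f$ of mass~$\rho$ satisfies $\|\mathcal{J}[f]\|\le\sqrt{3/2}\,\rho$ (since $\|A\|=\sqrt{3/2}$ on $SO_3(\mathbb{R})$), so \eqref{logsob} applies to $\rho M_{\mathcal{J}[f]}$; with \eqref{DI} this gives $\mathcal{D}[f]\ge 2\lambda\,\mathcal{H}(f|\rho M_{\mathcal{J}[f]})$. Chaining \eqref{logsob}, \eqref{pinsker}, the bound $\|\mathcal{J}[f]-\rho\mathcal{J}[M_{\mathcal{J}[f]}]\|\le\sqrt{3/2}\int_{SO_3(\mathbb{R})}|f-\rho M_{\mathcal{J}[f]}|$ and formula \eqref{gradV} produces $c_0>0$ (depending on $\rho,\lambda$) with $\mathcal{D}[f]\ge c_0\|\nabla V(\mathcal{J}[f])\|^2$. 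The third input, proven in Step~2, is a neighbourhood $\mathcal{N}$ of $\mathcal{E}_\infty^{J}$ in $M_3(\mathbb{R})$ and constants $C_1,c_2>0$ such that for $J\in\mathcal{N}$
\[
0\le V(J)-\mathcal{F}_\infty+\rho\ln\rho\le C_1\|\nabla V(J)\|^2,\qquad V(J)-\mathcal{F}_\infty+\rho\ln\rho\ge c_2\,\mathrm{dist}(J,\mathcal{E}_\infty^{J})^2 .
\]
Given this, \eqref{FHV} yields, for $f$ of mass~$\rho$ with $\mathcal{J}[f]\in\mathcal{N}$,
\[
0\le\mathcal{F}[f]-\mathcal{F}_\infty=\mathcal{H}(f|\rho M_{\mathcal{J}[f]})+\big(V(\mathcal{J}[f])+\rho\ln\rho-\mathcal{F}_\infty\big)\le\big(\tfrac{1}{2\lambda}+\tfrac{C_1}{c_0}\big)\mathcal{D}[f],
\]
so, as long as $\mathcal{J}[f(t,\cdot)]\in\mathcal{N}$, \eqref{dissipation} gives $\frac{d}{dt}\big(\mathcal{F}[f(t,\cdot)]-\mathcal{F}_\infty\big)\le-2\widetilde\lambda\,\big(\mathcal{F}[f(t,\cdot)]-\mathcal{F}_\infty\big)$ with $\widetilde\lambda=\tfrac12\big(\tfrac{1}{2\lambda}+\tfrac{C_1}{c_0}\big)^{-1}$.

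\emph{Step 2 (the \L{}ojasiewicz inequality).} For $\rho<\rho_c$ this is elementary: $J_{\mathrm{eq}}=0$ is an isolated solution of the compatibility equation and, by \eqref{HessVHH} and \eqref{JAA}, $\mathrm{Hess}\,V(0)=(1-\tfrac{\rho}{6})\,\mathrm{Id}$ is positive definite, so both inequalities hold on a small ball. For $\rho>\rho^*$ I would reduce to the one-variable potential $\overline V$ via the special singular value decomposition. The potential~$V$ is invariant under $J\mapsto PJQ$ for $P,Q\in SO_3(\mathbb{R})$ (because $\|PJQ\|=\|J\|$ and $\mathcal{Z}(PJQ)=\mathcal{Z}(J)$ by invariance of the Haar measure), and $\nabla V(D)=D-\rho\mathcal{J}[M_D]$ is diagonal whenever $D$ is diagonal (the off-diagonal entries of $\mathcal{J}[M_D]$ vanish by the symmetries $A\mapsto SAS$ with $S=\mathrm{diag}(1,-1,-1)$ and its analogues). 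Hence, writing a SSVD $J=PDQ$ for $J$ near $\mathcal{E}_\infty^{J}$, one has $V(J)=\overline V(D)$ and $\|\nabla V(J)\|=\|P\,\nabla V(D)\,Q\|=\|\nabla\overline V(D)\|$. Now $D_\infty^{\uparrow}=\alpha_1^{\uparrow}(\rho)I_3$ is a critical point of $\overline V$ at which $\mathrm{Hess}\,\overline V(D_\infty^{\uparrow})$ has signature $(+++)$ by Proposition~\ref{prop-stability-BGK}, hence a non-degenerate strict local minimiser; therefore $\overline V(D)-\overline V(D_\infty^{\uparrow})\le C_1\|\nabla\overline V(D)\|^2$ and $\overline V(D)-\overline V(D_\infty^{\uparrow})\ge c_2\|D-D_\infty^{\uparrow}\|^2$ near $D_\infty^{\uparrow}$. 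Transporting this back through the SSVD, and using that \eqref{minSSVD} (rescaled by $\alpha_1^{\uparrow}(\rho)$) gives $\mathrm{dist}(J,\mathcal{E}_\infty^{J})=\|D-D_\infty^{\uparrow}\|$, yields the two inequalities of Step~1 on a tubular neighbourhood $\mathcal{N}$ of $\mathcal{E}_\infty^{J}$.

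\emph{Step 3 (confinement and identification of the limit).} Choose $\delta>0$ small. If $\mathcal{H}(f_0|f_{\mathrm{eq},0})<\delta$ with $f_{\mathrm{eq},0}=\rho M_{J_{\mathrm{eq},0}}\in\mathcal{E}_\infty$, then \eqref{HFJ} gives $\tfrac12\|J_{\mathrm{eq},0}-\mathcal{J}[f_0]\|^2\le\mathcal{H}(f_0|f_{\mathrm{eq},0})$ and $\mathcal{F}[f_0]-\mathcal{F}_\infty\le\mathcal{H}(f_0|f_{\mathrm{eq},0})$; for $\delta$ small this forces $\mathcal{J}[f_0]\in\mathcal{N}$. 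Let $T^{*}=\sup\{T\ge0:\mathcal{J}[f(t,\cdot)]\in\mathcal{N}\ \forall t\in[0,T]\}$, positive because $t\mapsto\mathcal{J}[f(t,\cdot)]$ is continuous (the solution being continuous in Wasserstein distance and $\mathcal{J}$ continuous). On $[0,T^{*})$ Step~1 applies, so $\mathcal{F}[f(t,\cdot)]-\mathcal{F}_\infty\le e^{-2\widetilde\lambda t}\big(\mathcal{F}[f_0]-\mathcal{F}_\infty\big)<\delta$; the coercivity inequality then gives $\mathrm{dist}(\mathcal{J}[f(t,\cdot)],\mathcal{E}_\infty^{J})^2\le\delta/c_2$ there. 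Choosing $\delta$ so that the closed $(\delta/c_2)^{1/2}$-neighbourhood of $\mathcal{E}_\infty^{J}$ lies in $\mathcal{N}$, continuity excludes $T^{*}<\infty$, so $T^{*}=\infty$ and $\mathcal{F}[f(t,\cdot)]-\mathcal{F}_\infty\le e^{-2\widetilde\lambda t}\mathcal{H}(f_0|f_{\mathrm{eq},0})$ for all $t\ge0$. To identify~$f_\infty$, differentiate $\mathcal{J}[f(t,\cdot)]$ along \eqref{FPJf} written as $\partial_tf=\nabla_A\cdot\big(f\,\nabla_A(\ln f-A\cdot\mathcal{J}[f])\big)$ and integrate by parts: the coordinate functions on $SO_3(\mathbb{R})$ having bounded gradients, $\|\tfrac{d}{dt}\mathcal{J}[f(t,\cdot)]\|\le C\sqrt\rho\,\sqrt{\mathcal{D}[f(t,\cdot)]}$. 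An integration by parts in time plus the decay above give $\int_t^{\infty}e^{\widetilde\lambda s}\mathcal{D}[f(s,\cdot)]\,ds\le 2e^{-\widetilde\lambda t}\mathcal{H}(f_0|f_{\mathrm{eq},0})$, hence by Cauchy--Schwarz $\int_t^{\infty}\sqrt{\mathcal{D}[f(s,\cdot)]}\,ds\le C'e^{-\widetilde\lambda t}\sqrt{\mathcal{H}(f_0|f_{\mathrm{eq},0})}$; thus $\mathcal{J}[f(t,\cdot)]$ is Cauchy and converges to some $J_\infty\in\mathcal{E}_\infty^{J}$ (the set being closed and $\mathrm{dist}(\mathcal{J}[f(t,\cdot)],\mathcal{E}_\infty^{J})\to0$), with $\|\mathcal{J}[f(t,\cdot)]-J_\infty\|\le C''e^{-\widetilde\lambda t}\sqrt{\mathcal{H}(f_0|f_{\mathrm{eq},0})}$. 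Setting $f_\infty=\rho M_{J_\infty}$ and applying \eqref{HFJ} with this equilibrium,
\[
\mathcal{H}(f(t,\cdot)|f_\infty)=\big(\mathcal{F}[f(t,\cdot)]-\mathcal{F}_\infty\big)+\tfrac12\|J_\infty-\mathcal{J}[f(t,\cdot)]\|^2\le C\,e^{-2\widetilde\lambda t}\,\mathcal{H}(f_0|f_{\mathrm{eq},0}),
\]
which is the claim.

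\emph{Main obstacle.} The substantive point is Step~2: bounding $V(J)-V(J_{\mathrm{eq}})$ by $\|\nabla V(J)\|^2$ uniformly near the whole critical orbit $\mathcal{E}_\infty^{J}$, along which~$V$ is flat so that $\mathrm{Hess}\,V$ is genuinely degenerate in the orbit directions. The SSVD reduction makes this workable, but it relies on the two facts that~$V$ is $SO_3(\mathbb{R})\times SO_3(\mathbb{R})$-invariant and that $\nabla V$ maps diagonal matrices to diagonal matrices, after which the degeneracy is confined to the orbit and the transverse non-degeneracy is precisely the signature $(+++)$ of $\mathrm{Hess}\,\overline V$ provided by Proposition~\ref{prop-stability-BGK}. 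Secondary care is needed to keep all constants proportional to $\mathcal{H}(f_0|f_{\mathrm{eq},0})$ in the final bound, which the weighted time integration above achieves; the regularity justifying the dissipation identity and the differentiation of $\mathcal{J}[f]$ comes from the smoothing of \eqref{FPJf} for $t>0$.
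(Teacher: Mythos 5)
Your overall strategy is the same as the paper's: use the SSVD to reduce to the one-variable potential $\overline V$, exploit the $(+++)$ signature of $\mathrm{Hess}\,\overline V$ at $\alpha_1^{\uparrow}(\rho)I_3$ (resp.\ at $0$) from Proposition~\ref{prop-stability-BGK} to get quadratic coercivity and the gradient bound (your ``\L{}ojasiewicz'' step~2 is exactly the Taylor argument giving~\eqref{VgradVJ} in the paper), combine with the logarithmic Sobolev inequality~\eqref{logsob}, CKP~\eqref{pinsker}--\eqref{pinskerJ}, and the identities~\eqref{FHV},~\eqref{HFJ},~\eqref{DI} to get exponential decay of $\mathcal{F}[f]-\mathcal{F}_\infty$, then confine $\mathcal{J}[f]$ and identify the limit via $\frac{\d}{\d t}\mathcal{J}[f]$. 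One point where you genuinely diverge, and in fact simplify: to control the displacement of $\mathcal{J}[f]$ you bound $\|\frac{\d}{\d t}\mathcal{J}[f]\|\le C\sqrt{\rho}\sqrt{\mathcal{D}[f]}$ by Cauchy--Schwarz against the dissipation and then integrate the weighted dissipation $\int e^{\widetilde\lambda s}\mathcal{D}[f(s)]\,\d s$ in time, whereas the paper introduces the auxiliary linear operators $\mathcal{M}[\cdot]$ and $\mathcal{L}[\cdot]$, subtracts the identity $\mathcal{M}[f_{\mathrm{eq}}](\mathcal{J}[f_{\mathrm{eq}}])-\mathcal{L}[f_{\mathrm{eq}}]=0$, and uses CKP to obtain the pointwise bound $\|\frac{\d}{\d t}\mathcal{J}[f]\|\le\sqrt{C_1\mathcal{H}(f|f_{\mathrm{eq}})}$. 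Your version avoids the auxiliary machinery, uses only the free-energy dissipation already in hand, and yields the same exponential convergence of $\mathcal{J}[f]$; this is a real streamlining worth noting. You are also more explicit than the paper about why the SSVD reduction is legitimate (invariance of $V$ under $J\mapsto PJQ$, and that $\nabla V$ maps diagonal matrices to diagonal matrices so that $\|\nabla V(J)\|=\|\nabla\overline V(D)\|$), which the paper uses silently.

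There is one small gap at the start of your Step~3. You claim that~\eqref{HFJ} gives $\tfrac12\|J_{\mathrm{eq},0}-\mathcal{J}[f_0]\|^2\le\mathcal{H}(f_0|f_{\mathrm{eq},0})$, but~\eqref{HFJ} reads $\mathcal{H}(f_0|f_{\mathrm{eq},0})=\bigl(\mathcal{F}[f_0]-\mathcal{F}_\infty\bigr)+\tfrac12\|J_{\mathrm{eq},0}-\mathcal{J}[f_0]\|^2$, so your inequality requires $\mathcal{F}[f_0]\ge\mathcal{F}_\infty$, which you only know once $\mathcal{J}[f_0]$ is already inside the neighbourhood~$\mathcal{N}$ (it follows from~\eqref{FHV} together with $V(\mathcal{J}[f_0])\ge V_\infty$). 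This is circular as written, since the inequality is what you want to use to get into~$\mathcal{N}$ in the first place. The fix is immediate and is what the paper does: use CKP~\eqref{pinskerJ} with $g=f_{\mathrm{eq},0}$ directly, which gives $\|\mathcal{J}[f_0]-J_{\mathrm{eq},0}\|^2\le 3\rho\,\mathcal{H}(f_0|f_{\mathrm{eq},0})$ unconditionally, and then choose $\delta$ small accordingly. Once this is patched, the rest of your bootstrap argument is sound.
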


\begin{proof}
  For convenience, we write~$α=α^{↑}_1(ρ)$ (resp.~$α=0$ for the study of stability of the uniform equilibrium) and~$V_∞=V(αI_3)$. We also denote by~$E_∞$ the set of matrices~$J_{\mathrm{eq}}$ solutions to the compatibility equation~\eqref{compatJ} corresponding to the family of equilibria we are interested in, that is to say~$E_∞=\{αA_0,A_0∈SO_3(ℝ)\}$.

Since the signature of~$\mathrm{Hess}\overline{V}(αI_3)$ is~$(+++)$ (thanks to Proposition~\ref{prop-stability-BGK}), by continuity of~$\mathrm{Hess}\,\overline{V}$ (and of its smallest eigenvalue), there exists~$δ_0>0$ and~$η>0$ such that for all diagonal matrix~$D$ with~$∥D-αI_3∥<δ_0$,~$\mathrm{Hess}\,\overline{V}(D)$ is positive definite with lowest eigenvalue being greater than or equal to~$η$ (we recall that thanks to~\eqref{HessVHH}, its highest eigenvalue is always less than~$1$). By the following Taylor formulas, for all such~$D$, we have
  \begin{gather*} ∥∇\overline{V}(D)∥^2=(D-αI_3)·\Big(∫_0^1\mathrm{Hess}\,\overline{V}(αI_3+t(D-αI_3))\d t\Big)^2(D-αI_3),\\
    V(D)-V_∞=∫_0^1(1-t)(D-αI_3)·\mathrm{Hess}\,\overline{V}(αI_3+t(D-αI_3))(D-αI_3)\d t  \end{gather*}
  and therefore
  \begin{gather*}∥∇\overline{V}(D)∥^2⩾η\,∥D-αI_3∥^2,\\
    \frac{η}2\,∥D-αI_3∥^2⩽V(D)-V_∞⩽\frac12∥D-αI_3∥^2⩽\frac1{2η}∥∇\overline{V}(D)∥^2.
\end{gather*}
Therefore, we write~$U=\{J∈M_3(ℝ), \min_{J_{\mathrm{eq}∈E_∞}}∥J-J_{\mathrm{eq}}∥<δ_0\}$, which is a neighborhood of~$E_∞$. If~$J∈U$ and we write the SSVD~$J=PDQ$, we obtain by Proposition~\ref{ssvd} that~$\min_{J_{\mathrm{eq}∈E_∞}}∥J-J_{\mathrm{eq}}∥=∥D-αI_3∥⩽δ_0$ (when~$α>0$, and the result is still true if~$α=0$ since~$E_∞=\{0\}$ in that case). Therefore, since~$V(J)=V(D)$ we obtain that there exists~$J_{\mathrm{eq}}∈E_∞$ (which is equal to~$αPQ$) such that 
\begin{equation}
  \frac{η}2\,∥J-J_{\mathrm{eq}}∥^2⩽V(J)-V_∞⩽\frac1{2η}∥∇V(J)∥^2=\frac1{2η}∥J-ρ\mathcal{J}[M_J]∥^2.\label{VgradVJ}
\end{equation}
By the Csiszár--Kullback--Pinsker inequality~\eqref{pinsker}, we have that if~$g$ is a nonnegative measure with mass~$ρ$:

\begin{equation}\label{pinskerJ}
  ∥\mathcal{J}[f]-\mathcal{J}[g]∥⩽∫_{SO_3(ℝ)}∥A∥|f(A)-g(A)|\d A⩽\frac{\sqrt3}{\sqrt2}\sqrt{2ρ\mathcal{H}(f|g)},
\end{equation}
and therefore for~$g=ρM_{\mathcal{J}[f]}$, we obtain
\[∥\mathcal{J}[f]-\mathcal{J}[ρM_{\mathcal{J}[f]}]∥⩽\sqrt{3ρ\mathcal{H}(f|ρM_{\mathcal{J}[f]})}.\]
Combining this with~\eqref{FHV} and~\eqref{VgradVJ} with~$J=\mathcal{J}[f]$, we get that if~$\mathcal{J}[f]∈U$, then
\begin{equation*}
  \mathcal{F}[f]-\mathcal{F}_∞⩽(1+\tfrac{3ρ}{2η})\,\mathcal{H}(f|ρM_{\mathcal{J}[f]}).\label{FH}
\end{equation*}

Therefore, as soon as~$\mathcal{J}[f]∈U$, we have by~\eqref{DI} and the logarithmic Sobolev inequality~\eqref{logsob} (we recall that~$∥\mathcal{J}[f]∥⩽\frac{\sqrt3}{\sqrt2}ρ$ if the total mass of~$f$ is~$ρ$):
\begin{equation*}
  \mathcal{D}[f]⩾\frac{2λ}{1+\tfrac{3ρ}{2η}}(\mathcal{F}[f]-\mathcal{F}_∞).
\end{equation*}
By the dissipation of the free energy~\eqref{dissipation}, writing~$\widetilde{λ}=\frac{λ}{1+\tfrac{3ρ}{2η}}$ we obtain that as long as~$\mathcal{J}[f]∈U$,
\begin{equation}\label{Fexp}
  0⩽\mathcal{F}[f]-\mathcal{F}_∞⩽e^{-2\widetilde{λ}t}(\mathcal{F}[f_0]-\mathcal{F}_∞)⩽e^{-2\widetilde{λ}t}\mathcal{H}(f_0|f_{\mathrm{eq},0}),
\end{equation}
the first inequality coming from~\eqref{FHV} and the fact that~$V(\mathcal{J}[f])-V_∞⩾0$ thanks to~\eqref{VgradVJ}, and the last inequality coming from~\eqref{HFJ}. Finally, thanks to~\eqref{VgradVJ},~\eqref{FHV} and~\eqref{Fexp}, we obtain that still as long as~$\mathcal{J}[f(t,·)]∈U$, there exists~$J_{\mathrm{eq}}(t)$ such that
\begin{equation}\label{Jexp}
  ∥\mathcal{J}[f(t,·)]-J_{\mathrm{eq}}(t)∥⩽\sqrt{\frac2{η}(V(\mathcal{J}[f(t)])-V_∞)}⩽\frac{\sqrt{2}}{\sqrt{η}}\,e^{-\widetilde{λ}t}\sqrt{\mathcal{H}(f_0|f_{\mathrm{eq},0})}.
\end{equation}
Therefore, by taking~$δ=\min(\frac{η}2 δ_0^2,\frac1{3ρ}δ_0^2)$, and using~\eqref{pinskerJ} with~$g=f_{\mathrm{eq},0}$, we obtain that if~$\mathcal{H}(f_0|f_{\mathrm{eq},0})<δ$, then~$∥\mathcal{J}[f_0]-\mathcal{J}[f_{\mathrm{eq},0}]∥<δ_0$, so~$\mathcal{J}[f_0]∈U$, and for all positive time~$∥\mathcal{J}[f(t,·)]-J_{\mathrm{eq}}(t)∥<δ_0$ (and therefore~$\mathcal{J}[f(t,·)]$ stays in~$U$) thanks to~\eqref{Jexp}. Indeed, if it was not the case, for the first exit time~$t_0>0$ of~$U$, we would have~$∥\mathcal{J}[f(t_0,·)]-J_{\mathrm{eq}}(t_0)∥⩽\frac{\sqrt{2}}{\sqrt{η}}\sqrt{\mathcal{H}(f_0|f_{\mathrm{eq},0})}<δ_0$ which is a contradiction. From now on we suppose that~$\mathcal{H}(f_0|f_{\mathrm{eq},0})<δ$, so that~\eqref{Jexp} and~\eqref{Fexp} are valid for all time~$t⩾0$.

Let us now find a way to control the displacement of~$\mathcal{J}[f]$. For~$J∈M_3(ℝ)$, using the Fokker--Planck equation~\eqref{FPJf} and integrating by parts, we have
\begin{equation*}\label{ddtJfJ}
  \frac{\d}{\d t}{J·\mathcal{J}[f]}=∫_{SO_3(ℝ)}[∇_A(A·J)·∇_A(A·\mathcal{J}[f])-Δ_A(A·J)]f(A)\,\d A,
\end{equation*}
which can be written
\begin{equation}\label{ddtJML}
  \frac{\d}{\d t}\mathcal{J}[f]=\mathcal{M}[f](\mathcal{J}[f])-\mathcal{L}[f],
\end{equation}
where, when~$g$ is an integrable function on~$SO_3(ℝ)$, we define~$\mathcal{M}[g]$ as the linear operator from~$M_3(ℝ)$ to~$M_3(ℝ)$ given by the fact that for any~$J,J'∈M_3(ℝ)$,
\[J·\mathcal{M}[g](J')=∫_{SO_3(ℝ)}∇_A(A·J)·∇_A(A·J')g(A)\,\d A,\]
and~$\mathcal{L}[g]$ as the matrix\footnote{We can actually show (but we do not need it here) that~$\mathcal{L}[f]$ is proportional to~$\mathcal{J}[f]$. Indeed, since~$q↦q·Qq$ is an eigenfunction of the Laplacian on the unit sphere of~$ℝ^4$ (more precisely a spherical harmonic of degree~$2$) when~$Q$ is a symmetric trace-free matrix, we get, thanks to the local isometry~$Φ$ and Proposition~\eqref{isomorphism-Qtensors}, that~$A↦A·J$ is also an eigenfunction of the Laplacian on~$SO_3(ℝ)$.} such that for all~$J∈M_3(ℝ)$,
\[J·\mathcal{L}[g]=∫_{SO_3(ℝ)}Δ_A(A·J)g(A)\,\d A.\]
We therefore see that since the functions under the integral are smooth and bounded, there exists~$C_0>0$ such that for all~$J∈M_3(ℝ)$ and for any integrable function~$g$ on~$SO_3(ℝ)$,
\begin{equation}\label{boundsLM}
  ∥\mathcal{L}[g]∥⩽C_0∫_{SO_3(ℝ)}|g(A)|\,\d A\quad \text{and}\quad ∥\mathcal{M}[g](J)∥⩽C_0∥J∥∫_{SO_3(ℝ)}|g(A)|\,\d A.
\end{equation}
Therefore, defining~$f_{\mathrm{eq}}(t,·)=ρM_{J_{\mathrm{eq}}(t)}$, and using the fact that it is a stationary solution, thus giving by~\eqref{ddtJML} that~$\mathcal{M}[f_{\mathrm{eq}}](\mathcal{J}[f_{\mathrm{eq}}])-\mathcal{L}[f_{\mathrm{eq}}]=0$, we obtain
\begin{align*}
  \Big{∥}\frac{\d}{\d t}\mathcal{J}[f]\Big{∥}&=∥\mathcal{M}[f](\mathcal{J}[f])-\mathcal{M}[f_{\mathrm{eq}}](\mathcal{J}[f_{\mathrm{eq}}])-\mathcal{L}[f]+\mathcal{L}[f_{\mathrm{eq}}]∥\\
  &⩽∥\mathcal{M}[f](\mathcal{J}[f-f_{\mathrm{eq}}])∥+∥\mathcal{M}[f-f_{\mathrm{eq}}](\mathcal{J}[f_{\mathrm{eq}}])∥+∥\mathcal{L}[f-f_{\mathrm{eq}}]∥.
\end{align*}
Therefore, by using~\eqref{boundsLM} and the Csiszár--Kullback--Pinsker inequalities~\eqref{pinsker} and~\eqref{pinskerJ}, we get that there exists a constant~$C_1>0$ (only depending on~$ρ$) such that
\begin{equation*}
\Big{∥}\frac{\d}{\d t}\mathcal{J}[f]\Big{∥}⩽\sqrt{C_1\mathcal{H}(f|f_{\mathrm{eq}})}.
\end{equation*}
Combining this with~\eqref{HFJ},~\eqref{Jexp}, and~\eqref{Fexp}, we then get that there exists a constant~$C_2$ (not depending on~$f_0$) such that for all~$t⩾0$
\[\Big{∥}\frac{\d}{\d t}\mathcal{J}[f]\Big{∥}⩽e^{-\widetilde{λ}t}\sqrt{C_2\mathcal{H}(f_0|f_{\mathrm{eq},0})}.\]
Finally, this gives that~$\mathcal{J}[f]$ converges exponentially fast with rate~$\widetilde{λ}$ towards a given matrix~$J_∞∈M_3(ℝ)$ and since the distance between~$\mathcal{J}[f]$ and~$E_∞$ converges to~$0$ thanks to~\eqref{Jexp}, we obtain that~$J_∞∈E_∞$. More precisely, we have
\begin{equation}\label{JJinf}
  ∥\mathcal{J}[f(t,·)]-J_∞∥⩽∫_t^{+∞}\Big{∥}\frac{\d}{\d s}\mathcal{J}[f(s,·)]\Big{∥}\d t⩽\frac{e^{-\widetilde{λ}t}}{\widetilde{λ}}\sqrt{C_2\mathcal{H}(f_0|f_{\mathrm{eq},0})}.
\end{equation}
Defining~$f_∞=ρM_{J_∞}$ and using~\eqref{HFJ} with~$f_{\mathrm{eq}}=f_∞$, \eqref{Fexp}, and~\eqref{JJinf}, we then get that there exists a constant~$C_3>0$ (not depending on~$f_0$) such that
 \[\mathcal{H}(f(t,·)|f_∞)⩽C_3e^{-2\widetilde{λ}t}\,\mathcal{H}(f_0|f_{\mathrm{eq},0}),\]
which ends the proof. Let us remark that this proof covers the case~$α=0$, but if we only want to do this case, it can be simplified a lot since~$E_∞=\{0\}$.
\end{proof}

Let us finish this section by some comments. The proof of Theorem~\ref{thm-expstability-FP} has been done here in relative entropy. It may look similar in some points to~\cite{figalli2018global}, but the main idea is above all based on the fact that we measure the relative entropy with respect to a target measure~$ρM_{\mathcal{J}[f]}$ which is not itself a steady-state. The fine control of the potential~$V$ around the solutions of the compatibility equation is the key to link all these different quantities. The proof would have worked the same in~$L^2$, by using the regularizing effect of the equation (and~$L^∞$ bounds), as was done in~\cite{frouvelle2012dynamics} for the Vicsek model, but the main difference is again that we would compare~$\mathcal{D}[f]$ and~$\mathcal{F}[f]-\mathcal{F}_∞$ with~$∥f-ρM_{\mathcal{J}[f]}∥_2^2$. This proof seems to be adaptable to a lot of different models of Fokker--Planck type, such as the Doi--Onsager theory for suspensions of rodlike polymers, for which, as far as we know, no proof of exponential convergence is available (but the analog to the potential~$V$ has been studied, therefore the nature of the critical points is well-known). This is left for future work.

Finally, now that we have a good understanding of the long time behaviour of the Fokker--Planck equation~\eqref{FPJf}, we could try to further understand the limit of the particle system as~$N→∞$. Since the mean-field limit is essentially a law of large numbers, we expect fluctuations of order~$\frac1{\sqrt N}$, which explains why the order parameters of the numerical simulations in Figure~\ref{fig-scatter-c} are not so close to~$0$ for what is expected to be the uniform distribution. More precisely, as indicated by the estimate~\eqref{couplingEstimates}, the distance between the empirical measure and the solution to the Fokker--Planck equation can be bounded by~$\frac{e^{\widetilde{C}T}}{\sqrt{N}}$, for all~$t$ in~$[0,T]$. Therefore if we want such an estimate for a large time~$T$, we cannot do better than~$T$ of order~$\ln N$. However, since the equilibria are exponentially stable, the fluctuations that would push the empirical distribution away from the family of stable equilibria, are compensated by the deterministic dynamics of the Fokker--Planck equation. Therefore the only remaining fluctuations would cause the solution to fluctuate mainly in the tangential component of the family of equilibria. This approach has been made rigorous in the case of identical Kuramoto oscillators in~\cite{bertini2014synchronization} (which corresponds to the Vicsek model studied in~\cite{frouvelle2012dynamics} in dimension two), where it is proved that the solution stays close to the set of equilibria up to times of order~$N$, but with the center of synchronization of the distribution performing a Brownian motion on the circle at these time scales. In analogy with this result, we could expect in our case that, close to the family of von Mises distributions~$ρM_{αA}$ with~$α>0$ and~$A∈SO_3(ℝ)$, the long time behaviour at time~$t=sN$ of the empirical measure of particle system would be close to~$ρM_{αA(s)}$, where~$A(s)$ performs a Brownian motion on~$SO_3(ℝ)$. This is also left for future work.

\section*{Acknowledgements}
The author wants to thank his collaborators Pierre Degond, Sara Merino-Aceituno, Ariane Trescases and Antoine Diez for all the work done together on body-attitude coordination models~\cite{degond2017new,degond2018quaternions,degond2019alignment,degond2020phase,diez2020propagation}, which inspired the talk given at {[\rotatebox[origin=c]{180}{!}$ℕδ$A\,\raisebox{0.3ex}{\rotatebox[origin=c]{90}{\scalebox{0.8}{\reflectbox{$\sum$}}}}\,]} in November 2019, and finally led to the present paper. This work has been supported by the Project EFI ANR-17-CE40-0030 of the French National Research Agency.

\bibliographystyle{alpha}
\bibliography{../biblio.bib}

\end{document}